\documentclass[a4paper,12pt,oneside]{article}
\usepackage{amsfonts}
\usepackage[a4paper,hmargin={2.54cm,2.54cm},vmargin={3.17cm,3.17cm}]{geometry}
\usepackage{amsmath,amssymb,amsthm,amsfonts}
\usepackage{bbm}
\usepackage{stmaryrd}
\usepackage{amsthm}
\numberwithin{equation}{section}

\usepackage{abstract}
\setlength{\absleftindent}{1.5cm} \setlength{\absrightindent}{1.5cm}
\setlength{\abstitleskip}{-\parindent}
\setlength{\absparindent}{0cm}

\usepackage{graphicx}
\setlength{\parindent}{0.8cm}
\newtheoremstyle{mystyle}{12pt}{12pt}{\itshape}{0cm}{\bfseries}{}{1em}{}
\theoremstyle{mystyle}
\newtheorem{definition}{Definition}[section]
\newtheorem{theorem}{Theorem}[section]

\newtheorem{lemma}{Lemma}[section]
\newtheorem{proposition}{Proposition}[section]
\newtheorem{corollary}{Corollary}[section]

\newtheorem{remark}{Remark}[section]





\begin{document}

\title{\bf{Asymptotic Joint Distribution of Extreme Eigenvalues of the Sample Covariance Matrix in the Spiked Population Model}}
 \author{Dai Shi\footnote{Institute of Computational and Mathematical Engineering, Stanford University, Stanford, 94305}}

\date{}

\renewcommand{\thepage}{\roman{page}}
\setcounter{page}{1} 
\maketitle\renewcommand{\thepage}{\arabic{page}}\setcounter{page}{1}

\begin{abstract}
In this paper, we consider a data matrix $X\in\mathbb{C}^{N\times M}$ where all the columns are i.i.d. samples being $N$ dimensional complex Gaussian of mean zero and covariance $\Sigma\in\mathbb{C}^{N\times N}$. Here the population matrix $\Sigma$ is of finite rank perturbation of the identity matrix. This is the ``spiked population model'' first proposed by Johnstone in \cite{21}. As $M, N\to\infty$ but $M/N = \gamma\in(1, \infty)$, we first establish in this paper the asymptotic distribution of the smallest eigenvalue of the sample covariance matrix $S:= XX^*/M$. It also exhibits a phase transition phenomenon proposed in \cite{1} --- the local fluctuation will be the generalized Tracy-Widom or the generalized Gaussian to be defined in the paper. 
Moreover we prove that the largest and the smallest eigenvalue are asymptotically independent as $M, N\to\infty$. 
\\
\noindent{\bf{Key Words:}} Spiked population model, asymptotic independence of the extreme eigenvalues 
\end{abstract}

\section{Introduction}\label{sec1}
Suppose we have $M$ independently  and identically distributed samples $x_1, \ldots, x_M\in\mathbb{C}^{N}$, each of which is an $N$ dimensional complex-valued vector. Here $M$ is the sample size and $N$ is the dimension of our data. We can then form the data matrix $X = (x_1, \ldots, x_M) \in\mathbb{C}^{N\times M}$ and further define its sample covariance matrix
\[
S := \frac{1}{M}XX^*\in\mathbb{C}^{N\times N}. 
\]
In this paper we are interested in the asymptotic joint distribution of the largest and the smallest eigenvalues of the matrix $S$. Below is the assumptions of our model.
\begin{itemize}
\item
All the data vectors $x_i$ are independently and identically distributed following the complex Gaussian distribution of mean zero and covariance matrix $\Sigma\in\mathbb{C}^{N\times N}$. Here $\Sigma$ is a non-random positive definite matrix. 
\item
$M, N\to\infty$ but their ratio $M/N := \gamma^2$ is a fixed amount in the interval $(1, \infty)$. 
\item
We denote $\ell_1\geq\ell_2\geq\ldots\geq\ell_N$ to be the eigenvalues of the matrix $\Sigma$. Then we assume that all of the $\ell_i$'s are equal to one except for only finite of them. That is, there exist fixed integers $r_1, r_2$ which are independent of $M, N$ such that 
\[
\ell_N \leq  \ell_{N-1}\leq\ldots \leq \ell_{N-r_1+1} < 1 < \ell_{r_2} \leq \ell_{r_2-1}\leq \ldots\leq \ell_1
\]
and
\[
\ell_{N-r_1} = \ell_{N-r_1-1} = \ldots = \ell_{r_2+2} = \ell_{r_2+1} = 1.
\]
\end{itemize}

The model defined above is the (complex) ``spiked population model'' proposed in \cite{21}. The unit eigenvalues represent pure noise, while the spiked eigenvalues represent true information. In real applications, we will encounter such models quite often. In mathematical imaging (see \cite{a1}), the observed spectrum of the sample covariance matrix indeed has some detached eigenvalues, representing the possible scatterers in the region. As another example, in mathematical finance (see \cite{a2}), each column of our data matrix represents the correlated return of each stock. The sample correlation matrix has some spiked large eigenvalues, representing the main factors driving the market, and some small eigenvalues, representing the linear dependence of these factors. Other possible applications include, but not restricted to, speech recognition (see \cite{a3}), physics mixture (see \cite{a4}) and statistical learning (see \cite{a5}). 

We define $(\lambda_1, \ldots, \lambda_N)$ as the eigenvalues of the sample covariance matrix $S$, where $\lambda_1\geq \lambda_2 \geq \ldots\geq\lambda_N$. In the null case where $\Sigma = I$, a lot of properties are known.  The empirical measure of $\{\lambda_i\}_{i=1}^N$, denoted by $F_N := \sum_{i=1}^N\delta(\lambda_i)/N$, will almost surely converge in distribution to the Mar\v{c}enko-Pastur law (see \cite{13}), whose density is defined by 
\begin{equation}\label{MP law}
F(\lambda) := \frac{\gamma^2}{2\pi \lambda}\sqrt{(\lambda_+-\lambda)(\lambda-\lambda_-)}\cdot\mathbbm{1}_{\{\lambda_-\leq\lambda\leq\lambda_+\}}
\end{equation}
where $\lambda_+ := (1+\gamma^{-1})^2$ and $\lambda_- := (1-\gamma^{-1})^2$. The support of the density, $[\lambda_-, \lambda_+]$, is often called the Mar\v{c}enko-Pastur sea. Regarding the largest eigenvalue $\lambda_{\max}=\lambda_1$ and the smallest eigenvalue $\lambda_{\min} = \lambda_N$ of the sample covariance matrix $S$, German first proved that $\lambda_{\max}\to\lambda_+$ almost surely in \cite{14} and later Silvertein proved that $\lambda_{\min}\to\lambda_-$ almost surely in \cite{15}. That is to say, the largest and the smallest eigenvalues will converge to the corresponding edges of the Mar\v{c}enko-Pastur law. For a second order approximation, Johansson in  \cite{18} proved that $\lambda_{\max}$, properly scaled and centered, will converge weakly to the Tracy-Widom law. Baker, Forrester and Pearce in \cite{20} also proved the similar result for the smallest eigenvalue. More precisely, we have
\begin{eqnarray}
\mathbb{P}\biggl[\Big(\lambda_{\max}-(1+\gamma^{-1})^2\Big)\cdot\frac{\gamma M^{2/3}}{(1+\gamma)^{4/3}}\leq x\biggl] \to F_0(x), \label{l1}\\
\mathbb{P}\biggl[\Big((1-\gamma^{-1})^2-\lambda_{\min}\Big)\cdot\frac{\gamma M^{2/3}}{(\gamma-1)^{4/3}}\leq x\biggl] \to F_0(x). \label{l2}
\end{eqnarray}
Here $F_0(\cdot)$ is the cdf of the Tracy-Widom law, i.e. $F_0(x) := \det(I-\mathcal{A}_x)$ where $\mathcal{A}_x$ is the Airy operator on $L^2(x, \infty)$ with kernel defined later in (\ref{eqn4v2}). Later we will explain why $F_0(\cdot)$  has this zero subscript. We note that most of these results are universal, as is proved in \cite{17}, \cite{23} and \cite{24}, to list a few.

For the spiked population model where $\Sigma\neq I$, the phenomenon becomes much more interesting. Recent research found that the non-null eigenvalues tend to pull the extreme sample eigenvalues out of the Mar\v{c}enko-Pastur sea $[\lambda_-, \lambda_+]$, provided that they are larger or smaller than certain thresholds.  In \cite{3} Baik and Silverstein proved the almost sure limits of the extreme sample eigenvalues pulled out by the spikes. He proved that for fixed $j$, if the largest $j$-th population eigenvalue $\ell_j$ is greater than $1+\gamma^{-1}$, then the largest $j$-th sample eigenvalue $\lambda_j$ satisfies
\begin{equation}\label{conv1}
\lambda_j \to \ell_j + \frac{\ell_j\gamma^{-2}}{\ell_j-1} \quad\text{ almost surely. }
\end{equation}
If $\ell_j$ is less than or equal to the threshold $1+\gamma^{-1}$, then $\lambda_j\to\lambda_+$ which is just the right edge of the Mar\v{c}enko-Pastur sea. The same is true for the smallest eigenvalues. If $\ell_{N-j+1} < 1-\gamma^{-1}$ then (\ref{conv1}) also holds for $j$ replaced by $N-j+1$. Otherwise $\lambda_{N-j+1}\to\lambda_-$ almost surely. Note that this includes the case where some of the $\ell_j$'s are the same. In this case, the corresponding $\lambda_j$'s just converge to the same limit specified in (\ref{conv1}). We call these eigenvalues ``packed''. 

But what is the second order approximation? Baik, Ben Arous and P\'{e}che in \cite{1} observed the phase transition phenomenon of the asymptotic distribution of the largest sample eigenvalue $\lambda_{\max}=\lambda_1$. They proved that if $\ell_1 > 1+\gamma^{-1}$ (i.e., when $\lambda_1$ is pulled out of the sea), then the local fluctuation of $\lambda_1$ will be asymptotically the same as the largest eigenvalue of a $k\times k$ GUE matrix, where $k$ is algebraic multiplicity of $\ell_1$. On the other hand if $\ell_1\leq1+\gamma^{-1}$ then  $\lambda_1$ follows the generalized Tracy-Widom law defined in Definition \ref{def1}. Moreover, in  \cite{4} Bai and Yao obtained the joint local fluctuation of the packed sample eigenvalues which are pulled out --- when suitably centered and scaled, they are asymptotically the same distributed as some unitary Gaussian random matrix. 

We note that similar results can also be obtained for perturbed GUE case, see \cite{2} and \cite{6}. In \cite{4}, \cite{5} and \cite{7} the authors also got the similar results for real Gaussian case or even universal case. 

However, none of these results deal with joint distribution of the largest and the smallest sample eigenvalues in our spiked population model. Intuitively they should be independent. Indeed, the distance between the two extreme eigenvalues is at least $\lambda_+-\lambda_-$, i.e., the width of the Mar\v{c}enko-Pastur sea. For this large distance, the repulsion force between eigenvalues is very weak. As a result, asymptotically they will fluctuate freely, without interacting each other at all.  
For a brief history, in \cite{11} Bianchi, Debbah and Najim first established the independence for the GUE case. Their method is based on bounding the expansion of the Fredholm determinant. 
Later Bornermann in \cite{12} used operator theory to give a much simpler proof. In \cite{19} a different approach relying on PDE can also be found. 

In this paper, the two branches of the results are combined. Here we establish the asymptotic distribution of $(\lambda_1, \lambda_N)$, i.e., the largest and the smallest eigenvalues. More precisely, we have two results. The first one is for the asymptotic distribution of the smallest eigenvalue. Here we also observe the same phase transition phenomenon as in \cite{1}. We proved that, when suitably centered and scaled, the local fluctuation of the smallest eigenvalue will be similar to the largest one discussed above. As a second result, we prove that as $M, N\to\infty$, the largest and the smallest eigenvalues will be asymptotically independent. From this we can easily establish the asymptotic joint distribution of $(\lambda_1, \lambda_N)$. 

\subsection{Main Results}\label{sec1.1}
To state our maim theorems, we need to define a few functions. They  first appeared in \cite{1}. In that paper Baik, Ben Arous and P\'{e}ch\'{e} also discussed some properties of them. 

For any integer $m\geq1$, we define
\begin{eqnarray}
s^{(m)}(u) & = & \frac{1}{2\pi}\int e^{iua + ia^3/3}\frac{1}{(-a)^m}da, \label{eqn1v2}\\
t^{(m)}(v) & = & \frac{1}{2\pi}\int e^{iva + ia^3/3}(-ia)^{m-1}da. \label{eqn2v2}
\end{eqnarray}
Here the contour is from $\infty e^{5\pi/6}$ to $\infty e^{i\pi/6}$ such that the point $a=0$ lies above the contour. As an immediate observation, $s^{(0)} = t^{(1)}$ is just the Airy function, denoted by
\begin{equation}\label{eqn3v2}
\mathrm{Ai}(u) := \frac{1}{2\pi}\int e^{iua+ia^3/3}da
\end{equation}
with the same contour as above. We can also define the Airy kernel
\begin{equation}\label{eqn4v2}
\mathrm{A}(u, v) := \frac{\mathrm{Ai}(u)\mathrm{Ai}'(v)-\mathrm{Ai}'(u)\mathrm{Ai}(v)}{u-v} = \int_0^\infty \mathrm{Ai}(u+y)\mathrm{Ai}(v+y)dy. 
\end{equation}
Furthermore, we define $\mathcal{A}_x$ as the integral operator on $L^2(x, \infty)$ with the kernel $\mathrm{A}(u, v)$. 

%
%
\begin{definition}\label{def1}
Given an integer $k\geq0$, $F_k(x)$ is defined to be 
\begin{equation}\label{eqn5v2}
F_k(x) = \det(1-\mathcal{A}_x)\cdot\det\biggl(\delta_{mn} - \biggl\langle\frac{1}{1-\mathcal{A}_x}s^{(m)}, t^{(n)}\biggl\rangle\biggl)_{m, n = 1}^k
\end{equation}
where $\langle\cdot, \cdot\rangle$ is the inner product in $L^2(x, \infty)$.
\end{definition}
\begin{remark}\label{remark1}
As is shown in \cite{1}, $F_k(\cdot)$ is a distribution function. Moreover, $F_0(\cdot)$ is just the basic Tracy-Widom law. Hence $F_k(\cdot)$ for general $k\geq0$ can be regarded as a generalization of the Tracy-Widom law. 
\end{remark}

%
%
\begin{definition}\label{def2}
For $k\geq1$, we define the distribution function $G_k(x)$ as
\begin{equation}\label{eqn6v2}
G_k(x):= \frac{1}{(2\pi)^{k/2}\prod_{j=1}^kj!}\int_{\infty}^x\ldots\int_{\infty}^x\prod_{i<j}|\xi_i-\xi_j|^2\cdot\prod_{i=1}^ke^{-\xi_i^2/2}d\xi_1\ldots d\xi_k
\end{equation}
to be the distribution function of the largest eigenvalue of a standard $k\times k$ GUE matrix. 
\end{definition}
\begin{remark}
When $k=1$, $G_1(\cdot)$ is just the density of a standard Gaussian. Hence we can regard $G_k(\cdot)$ as a generalization of the Gaussian distribution. 
\end{remark}

Having defined our key distribution functions $F_k(\cdot)$ and $G_k(\cdot)$, we can state our main theorems below. 
%
%
\begin{theorem}\label{thm1}
For the spiked population model defined above, the smallest eigenvalue $\lambda_{\min} = \lambda_N$ of the sample covariance matrix has the following asymptotic distribution, depending on whether the smallest eigenvalue of the true covariance matrix $\Sigma$ is small enough or not.
\begin{enumerate}
\item
If $\ell_N = \ldots = \ell_{N-k+1} = 1-\gamma^{-1}$ for some $k\geq 0$,  and the other $\ell_i$'s lie in a compact subset of $(1-\gamma^{-1}, \infty)$, then as $M\to\infty$ 
\begin{equation}\label{eqn7v2}
\mathbb{P}\biggl\{\frac{\gamma M^{2/3}}{(\gamma-1)^{4/3}}\Big[(1-\gamma^{-1})^2-\lambda_{\min}\Big]\leq x\biggl\}\to F_k(x).
\end{equation}
\item
If $\ell_N = \ldots = \ell_{N-k+1} < 1-\gamma^{-1}$ for some $k\geq 1$,  and the other $\ell_i$'s lie in a compact subset of $(\ell_N, \infty)$, then as $M\to\infty$ 
\begin{equation}\label{eqn8v2}
\mathbb{P}\biggl\{\frac{M^{1/2}}{\sqrt{\ell_N^2-\ell_N^2\gamma^{-2}/(1-\ell_N)^2}}\biggl[\biggl(\ell_N - \frac{\ell_N\gamma^{-2}}{1-\ell_N}\biggl)-\lambda_{\min}\biggl]\leq x\biggl\}\to G_k(x).
\end{equation}
\end{enumerate}
\end{theorem}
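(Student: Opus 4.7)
The plan is to follow and adapt the determinantal point process approach of Baik, Ben Arous and P\'ech\'e \cite{1}, which established the analogous phase transition at the \emph{right} edge, to the left edge $\lambda_- = (1-\gamma^{-1})^2$ of the Mar\v{c}enko--Pastur sea. The starting point is the Harish--Chandra--Itzykson--Zuber integral: for complex Gaussian samples with deterministic covariance $\Sigma$, the joint eigenvalue density of $S$ takes the biorthogonal form
\[
p(\lambda_1,\ldots,\lambda_N) \;=\; c_{N,M}\,\Delta(\lambda)\,\det\bigl(e^{-M\lambda_i/\ell_j}\bigr)_{i,j=1}^N\prod_{i=1}^N\lambda_i^{M-N},
\]
so that $\{\lambda_i\}$ is a determinantal point process whose correlation kernel $K_M(u,v)$ admits a standard double contour integral representation with phase $M f(z)$, $f(z)\sim z u - (M/N)\log z$, and a finite spike factor $\prod_{\ell_j\neq 1}(z-\ell_j^{-1})/(w-\ell_j^{-1})$ after the contribution of the $N-r_1-r_2$ unit population eigenvalues collapses. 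Then
\[
\mathbb{P}\bigl(\lambda_{\min} > t\bigr) = \det\bigl(I-K_M\bigr)_{L^2(0,t)},
\]
so both parts of the theorem reduce to pointwise convergence of the rescaled kernel together with trace-class estimates that justify passage to the limit inside the Fredholm determinant.

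Next I carry out a steepest descent analysis at the left edge. The phase $f$ has two critical points that coalesce exactly at $z_- := (1-\gamma^{-1})^{-1} = \gamma/(\gamma-1)$, and the local change of variables $z = z_- + c(\gamma)\,M^{-1/3} a$ (with the explicit constant that matches the normalization $\gamma M^{2/3}/(\gamma-1)^{4/3}$ in (\ref{eqn7v2})) reduces the bulk part of the integral to the Airy contour (\ref{eqn3v2}); in the absence of spikes this recovers (\ref{l2}) with $F_0$. The spike factor is where the phase transition appears. In Part~1, the hypothesis $\ell_N = \cdots = \ell_{N-k+1} = 1-\gamma^{-1}$ is precisely $\ell_j^{-1} = z_-$, so each surviving factor $(z-\ell_j^{-1})$ and $(w-\ell_j^{-1})^{-1}$ rescales as $a$ and $a^{-1}$ respectively, producing exactly the integrands of $t^{(m)}$ and $s^{(m)}$ in (\ref{eqn1v2})--(\ref{eqn2v2}). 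The limiting kernel is a rank-$k$ perturbation of the Airy kernel, and a standard determinantal identity rewrites its Fredholm determinant as the right-hand side of (\ref{eqn5v2}), yielding $F_k(x)$.

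For Part~2, where $\ell_N < 1-\gamma^{-1}$, the $k$ poles $w = \ell_j^{-1}$ for $j = N-k+1,\ldots,N$ lie on the \emph{wrong side} of the natural steepest descent $w$-contour, since $\ell_j^{-1} > z_-$. Before localising on the $M^{-1/2}$ scale about the pulled-out location $\mu_N := \ell_N - \ell_N\gamma^{-2}/(1-\ell_N)$ (consistent with (\ref{conv1})), one deforms the $w$-contour across these $k$ poles and collects their residues. A direct computation shows that the remaining double-contour contribution decays on this scale, while the $k$ residues rebuild the determinantal density of the largest eigenvalue of a $k\times k$ GUE matrix: the centering $\mu_N$ and the variance $\sigma_N^2 = \ell_N^2-\ell_N^2\gamma^{-2}/(1-\ell_N)^2$ are respectively the stationary value and the second derivative of the residue phase at $w = \ell_N^{-1}$, which places $k$ asymptotically Gaussian variables in Vandermonde-squared repulsion and gives $G_k(x)$ via (\ref{eqn6v2}).

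The main obstacle is technical rather than conceptual: the double contours must simultaneously respect positivity of $\lambda$, pass on the correct side of every spike pole $\ell_j^{-1}$, and coincide with the steepest descent of $\Re f$ away from the saddle; and then one needs uniform exponential decay estimates of the integrand outside an $M^{-1/3}$-neighbourhood of $z_-$ (respectively an $M^{-1/2}$-neighbourhood of $\ell_N^{-1}$ in Part~2) that are strong enough to upgrade pointwise convergence of $K_M$ to trace-class convergence on the relevant half-line. The genuinely delicate step is the one-step contour deformation in Part~2, which must simultaneously reveal the GUE structure from the residues at $\ell_j^{-1}$ and control the interaction with the coalescent bulk saddle at $z_-$ on a scale coarser than $M^{-2/3}$.
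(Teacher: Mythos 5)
Your proposal is correct and follows essentially the same route as the paper: a determinantal/Fredholm representation of $\mathbb{P}(\lambda_{\min}\ge\xi)$ via a double-contour kernel, a third-order (coalescent) saddle at $\gamma/(\gamma-1)$ giving the Airy-type rescaling and $F_k$ in the critical case, and a simple saddle with residue extraction producing the Hermite/GUE kernel and $G_k$ in the supercritical case, with uniform tail estimates upgrading pointwise kernel convergence to trace-class convergence. One small convention slip: in the paper's kernel (equation (\ref{eqn5})) the spike factor is $\prod_k (w-\pi_k)/(z-\pi_k)$, so the spike \emph{poles} sit in the $z$-integral, not the $w$-integral as you wrote; consequently the residue bookkeeping in Part~2 is carried out on the $z$-contour ($\mathcal{H}$) while the $w$-contour ($\mathcal{J}$) is a plain Gaussian saddle --- this does not change the structure of the argument, but you should match the variables before carrying out the deformation.
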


Our next theorem states that as $M\to\infty$, the largest and the smallest eigenvalues of the sample covariance matrix are asymptotically independent of each other. 

To state it more rigorously, we assume that $\ell_N = \ldots = \ell_{N-\widehat{k}_1+1}$, $\ell_1 = \ldots = \ell_{\widehat{k}_2}$ for some $\widehat{k}_1, \widehat{k}_2\geq1$, and the rest of the $\ell_i$'s lie in a compact subset of $(\ell_N, \ell_1)$. We define $k_1, k_2$ such that
\begin{equation}\label{k}
k_1 = \left\{
\begin{array}{ll}
0 & \text{ if } \ell_N > (1-\gamma^{-1})^2, \\
\widehat{k}_1 & \text{ if } \ell_N \leq (1-\gamma^{-1})^2.
\end{array}
\right.
\qquad 
k_2 = \left\{
\begin{array}{ll}
0 & \text{ if } \ell_1 < (1+\gamma^{-1})^2, \\
\widehat{k}_2 & \text{ if } \ell_1 \geq (1+\gamma^{-1})^2.
\end{array}
\right.
\end{equation}
For $(\lambda_{\min}, \lambda_{\max}) = (\lambda_N, \lambda_1)$, we define the centered and the scaled version of them. Define
\begin{eqnarray}
\widetilde{\lambda}_{\min} & = & \left\{
\begin{array}{ll}
\frac{\gamma M^{2/3}}{(\gamma-1)^{4/3}}\Big[(1-\gamma^{-1})^2-\lambda_{\min}\Big] & \text{ if } \ell_N \geq (1-\gamma^{-1})^2 \\
\frac{M^{1/2}}{\sqrt{\ell_N^2-\ell_N^2\gamma^{-2}/(1-\ell_N)^2}}\Big[\Big(\ell_N - \frac{\ell_N\gamma^{-2}}{1-\ell_N}\Big)-\lambda_{\min}\Big] & \text{ if } \ell_N < (1-\gamma^{-1})^2
\end{array}
\right.\label{eqn1.10}\\
\widetilde{\lambda}_{\max} & = & \left\{
\begin{array}{ll}
\frac{\gamma M^{2/3}}{(\gamma+1)^{4/3}}\Big[\lambda_{\max}- (1+\gamma^{-1})^2\Big] & \text{ if } \ell_{1} \leq (1+\gamma^{-1})^2 \\
\frac{M^{1/2}}{\sqrt{\ell_1^2+\ell_1^2\gamma^{-2}/(\ell_1-1)^2}}\Big[\lambda_{\max} - \Big(\ell_1 + \frac{\ell_1\gamma^{-2}}{\ell_1-1}\Big)\Big] & \text{ if } \ell_1 > (1+\gamma^{-1})^2
\end{array}
\right.\label{eqn1.11}
\end{eqnarray}
to be the centered and the scaled version of $(\lambda_{\min}, \lambda_{\max})$. Then we have the following theorem regarding the asymptotic distribution of $(\widetilde{\lambda}_{\min}, \widetilde{\lambda}_{\max})$. 

%
%
\begin{theorem}\label{thm2}
Under the assumption of the spiked population model, as $M\to\infty$
\[
\mathbb{P}(\widetilde{\lambda}_{\min}\leq x, \widetilde{\lambda}_{\max}\leq y) \to \left\{
\begin{array}{ll}
F_{k_1}(x)F_{k_2}(y)  & \text{ if } \ell_N\geq (1-\gamma^{-1})^2, \ell_1\leq (1+\gamma^{-1})^2, \\
G_{k_1}(x)F_{k_2}(y)  & \text{ if } \ell_N< (1-\gamma^{-1})^2, \ell_1\leq (1+\gamma^{-1})^2, \\
F_{k_1}(x)G_{k_2}(y)  & \text{ if } \ell_N\geq (1-\gamma^{-1})^2, \ell_1> (1+\gamma^{-1})^2, \\
G_{k_1}(x)G_{k_2}(y)  & \text{ if } \ell_N< (1-\gamma^{-1})^2, \ell_1> (1+\gamma^{-1})^2.
\end{array}
\right.
\]
\end{theorem}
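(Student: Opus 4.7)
The plan is to express $\mathbb{P}(\widetilde{\lambda}_{\min}\leq x,\widetilde{\lambda}_{\max}\leq y)$ as a gap probability and then exploit the rapid decay of the correlation kernel between the two edge regions. By the scaling conventions (\ref{eqn1.10})--(\ref{eqn1.11}), this probability equals $\mathbb{P}(\lambda_{\min}\geq a_M(x),\lambda_{\max}\leq b_M(y))$ for deterministic endpoints $a_M(x),b_M(y)$, which in turn is the probability that no sample eigenvalue of $S$ falls in the set $I=(-\infty,a_M(x))\cup(b_M(y),\infty)$. Since the $x_i$ are complex Gaussian, the eigenvalue process of $S$ is determinantal up to the finite-rank deformation produced by the spikes as in \cite{1}, so the gap probability has an explicit Fredholm form $\det(1-K_M\chi_I)$ multiplied by a finite-dimensional determinant built out of the functions $s^{(m)}, t^{(n)}$ that appear in Definition~\ref{def1}.

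Write $\chi_I=\chi_L+\chi_R$ with $\chi_L=\chi_{(-\infty,a_M(x))}$ and $\chi_R=\chi_{(b_M(y),\infty)}$, and view $K_M\chi_I$ as a $2\times 2$ block operator with diagonal blocks $K_{LL},K_{RR}$ supported on the two regions and off-diagonal blocks $K_{LR},K_{RL}$. A Schur-complement identity then gives
\[
\det(1-K_M\chi_I)=\det(1-K_{LL})\det(1-K_{RR})\det\bigl(1-(1-K_{RR})^{-1}K_{RL}(1-K_{LL})^{-1}K_{LR}\bigr).
\]
The core step is to prove that $\|K_{LR}\|_1,\|K_{RL}\|_1\to 0$ in trace norm. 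Since $I_L$ concentrates near $(1-\gamma^{-1})^2$ and $I_R$ near $(1+\gamma^{-1})^2$, the two regions are separated by a macroscopic gap, and a steepest-descent analysis of the double-contour representation of $K_M$, of the type carried out in \cite{1}, yields super-polynomial (in fact exponential) decay of the form $|K_M(u,v)|\lesssim e^{-cM}$ for $u\in I_L,v\in I_R$, because the saddle points producing the Airy behaviour at the two edges are distinct and the associated exponential weights separate.

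Once the cross term in the Schur complement is shown to tend to $1$, the determinant factorizes, and Theorem~\ref{thm1} together with the analogous result of \cite{1} for $\lambda_{\max}$ directly supplies $\det(1-K_{LL})\to F_{k_1}(x)$ or $G_{k_1}(x)$ and $\det(1-K_{RR})\to F_{k_2}(y)$ or $G_{k_2}(y)$ according to whether $\ell_N$ lies at or below $(1-\gamma^{-1})^2$ and whether $\ell_1$ lies at or above $(1+\gamma^{-1})^2$. One also has to verify that the finite-dimensional correction multiplying the Fredholm determinant splits in the limit: the spike data attached to the small eigenvalues decouple from those attached to the large ones because the relevant inner products $\langle(1-\mathcal{A})^{-1}s^{(m)},t^{(n)}\rangle$ are supported on disjoint half-lines in the two edge coordinates, and the same off-diagonal decay of $K_M$ forces the cross contributions to vanish asymptotically.

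The main technical obstacle is precisely the trace-norm estimate on $K_{LR}$ and $K_{RL}$ in the spiked setting, since the kernel now carries rank-bounded corrections whose contour integrands must be analyzed simultaneously at both edges and uniformly in the spiked parameters; this is where a careful joint saddle-point computation is required. Once that estimate is in place, the factorization of the Fredholm determinant follows along the operator-theoretic lines of \cite{12}, and matching the four regimes of the thresholds $(1\pm\gamma^{-1})^2$ for $\ell_N$ and $\ell_1$ yields the four cases in the statement of Theorem~\ref{thm2}.
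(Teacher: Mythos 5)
Your overall plan matches the paper's: write the joint gap probability as a Fredholm determinant of a $2\times2$ block operator built from a double-contour kernel, show the off-diagonal blocks are asymptotically negligible, and conclude that the determinant factorizes into the one-edge limits supplied by Theorem~\ref{thm1} and \cite{1}. The packaging differs: you invoke a Schur-complement factorization, while the paper conjugates by a diagonal operator $\operatorname{diag}(I,W_M I)$ and then argues directly via local Lipschitz continuity of $\det$ in trace norm. Those two algebraic routes are essentially interchangeable.

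There is, however, a genuine gap in your core estimate. You assert that both $\|K_{LR}\|_1$ and $\|K_{RL}\|_1$ tend to zero, with $|K_M(u,v)|\lesssim e^{-cM}$ for $u$ near the lower edge and $v$ near the upper edge, ``because the saddle points producing the Airy behaviour at the two edges are distinct and the associated exponential weights separate.'' This is not true for the bare kernel coming from Proposition~\ref{prop1}. After the change of variables, the $z$-integral in the off-diagonal block $\mathcal{K}_{12}$ concentrates at the lower-edge saddle $p_1=\gamma/(\gamma-1)$ of $f_1$ and contributes a factor $e^{Mf_1(p_1)}$, while the $w$-integral concentrates at the upper-edge saddle $p_2=\gamma/(\gamma+1)$ of $f_2$ and contributes $e^{-Mf_2(p_2)}$. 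Their product carries the unbalanced exponential factor $e^{M(f_1(p_1)-f_2(p_2))}$, and a short computation gives $\Re\bigl(f_1(p_1)-f_2(p_2)\bigr)\approx(1-\gamma^{-2})\log\frac{\gamma+1}{\gamma-1}>0$ for $\gamma>1$, so one of the off-diagonal blocks blows up exponentially while the other decays even faster. The paper resolves exactly this issue by introducing the normalization constant $W_M$ in~(\ref{const}), which carries $e^{M(f_1(p_1)-f_2(p_2))}$ (together with the $\widehat{g}_i(p_i)$ and $M^{(k_1-k_2)/3}$ prefactors) and leaves the Fredholm determinant unchanged because conjugation by a constant is invisible to the determinant; only then does Lemma~\ref{lemma6.1} deliver $\|W_M^{-1}\mathcal{K}_{12}\|_1,\,\|W_M\mathcal{K}_{21}\|_1\leq CM^{-1/3}$. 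Your Schur-complement route could in principle sidestep the explicit choice of $W_M$, since only the \emph{product} $K_{RL}K_{LR}$ enters the Schur factor and the rogue exponential cancels in that product; but that observation is exactly what has to be made and proved, and your text instead asserts decay of each off-diagonal block individually, which is false. You would also need uniform bounds on $(1-K_{LL})^{-1}$ and $(1-K_{RR})^{-1}$, which hold once the diagonal limits are established but must be stated.

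A secondary point: you attribute the difficulty to the finite-rank spike corrections. Those are handled by the $\widehat{g}_i$ factors and are comparatively benign; the real obstruction is the exponential imbalance between the two saddle values described above, and it is present even with no spikes at all.
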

\begin{remark}
Since $F_k(\cdot)$ or $G_k(\cdot)$ is the asymptotic distribution of $\widetilde{\lambda}_{\min}$ and $\widetilde{\lambda}_{\max}$, Theorem \ref{thm2} also tells us that $(\lambda_{\min}, \lambda_{\max})$ are asymptotically independent. 
\end{remark}

Possible applications of our result include data analysis. Suppose we have a large data matrix $X\in\mathbb{C}^{N\times M}$. The question is to determine whether the samples are i.i.d. drawn from a Gaussian distribution of a given spiked covariance matrix $\Sigma$. For example, when $\Sigma = I$ we just want to determine whether our data matrix is pure noise. One possible way is test the hypothesis is to see whether the largest and smallest eigenvalues are close to their asymptotic limit. But here we are testing the null hypothesis using \emph{two} criteria, involving both $\lambda_{\max}$ and $\lambda_{\min}$. Hence we need their joint distribution to carry on the hypothesis testing. Another possible application involves the local fluctuation of condition numbers. They are both discussed in Section \ref{sec2}.  

The rest of the paper is organized as follows. A sketch of the proof will be provided in section \ref{sec1.1}.  In section \ref{sec2} we will discuss some applications of the result. In section \ref{sec3} we will represent the probability $\mathbb{P}(\widetilde{\lambda}_{\min}\leq x, \widetilde{\lambda}_{\max}\leq y)$ in a determinantal form, which will be the basis for future proofs. Section \ref{sec4} and \ref{sec5} will be devoted to prove Theorem \ref{thm1} under the two cases. The proof of Theorem \ref{thm2} can be found in section \ref{sec6}. Finally Section \ref{sec7} serves the conclusion of the whole paper.

\subsection{Sketch of the proof}\label{sec1.3}
First let's focus on the smallest eigenvalue. 
The basic idea is to calculate the large $M$ limit of the  density of $\lambda_{\min}$. Since in our model, the distribution of each sample is Gaussian, we can write down the density explicitly in (\ref{den}). Due to some trick proposed first by Tracy and Widom in \cite{18}, we can further write the density into the form of a Fredholm determinant $\det(I-\mathcal{K}_{11})$, where the kernel of the operator $\mathcal{K}_{11}$ is defined in (\ref{eqn14}). We note that the kernel can be written as two contour integrals. By designing our contours carefully and by the saddle point analysis, we can finally obtain the asymptotic limit of the kernels, or the limit of the operator $\mathcal{K}_{11}$ itself under the trace norm. Since the determinant is a locally Lipschitz function of the operator under the trace norm, we have our desired limit.

For proving the asymptotic independence, our strategy is the same. We write the joint distribution of $(\lambda_{\min}, \lambda_{\max})$ into a determinantal form $\det(I-\mathcal{K})$ in (\ref{eqn4}), where the operator $\mathcal{K} = (\mathcal{K}_{ij})_{i, j = 1}^2$ can be written as a $2\times 2$ block matrix. We prove that under the correct scaling, the diagonal blocks $\mathcal{K}_{11}, \mathcal{K}_{22}$ will have a non-trivial limit, while the off-diagonal blocks will converge to zero, both under the trace norm. Hence $\mathcal{K}$ will converge to a block diagonal matrix. The independence result follows easily from the trivial fact that the determinant of a diagonal matrix equals to the product of the determinants of its diagonal blocks. 

\section{Applications}\label{sec2}
In this section we will discuss some applications of Theorem \ref{thm2}. In subsection \ref{sec2.1} we will talk about the asymptotic distribution of condition numbers of a data matrix. In subsection \ref{sec2.2} some application in hypothesis testing will be discussed.

%
%
\subsection{Distribution of Condition Numbers}\label{sec2.1}
As a simple consequence of Theorem \ref{thm2}, we can get the asymptotic fluctuation of the ratio $\lambda_{\max}/\lambda_{\min}$, which is the square of the condition number of the original data matrix $X$. More precisely, we have the following Corollary. 

%
%
\begin{corollary}\label{cor2.1}
Let $\kappa$ be the condition number of the original data matrix $X$. With the definition of $k_1, k_2$ in (\ref{k}), we have the following four results.
\begin{enumerate}
\item
If $\ell_N\geq (1-\gamma^{-1})^2, \ell_1\leq (1+\gamma^{-1})^2$, then 
\begin{equation}\label{eqn2.1}
M^{2/3}\biggl[\Bigl(\frac{1-\gamma^{-1}}{1+\gamma^{-1}}\Bigl)^2\kappa^2 - 1\biggl] \to\frac{\gamma}{(\gamma-1)^{2/3}}X_{F_{k_1}} +  \frac{\gamma}{(\gamma+1)^{2/3}}Y_{F_{k_2}}.
\end{equation}
\item
If $\ell_N< (1-\gamma^{-1})^2, \ell_1\leq (1+\gamma^{-1})^2$, then 
\begin{equation}\label{eqn2.2}
M^{1/2}\biggl[ \frac{\ell_N-\ell_N\gamma^{-2}/(1-\ell_N)}{(1+\gamma^{-1})^2}\kappa^2-1\biggl] \to
\frac{\sqrt{1-[\gamma(1-\ell_N)]^{-2}}}{1-\gamma^{-2}(1-\ell_N)^{-1}}X_{G_{k_1}}.
\end{equation}
\item
If $\ell_N\geq (1-\gamma^{-1})^2, \ell_1> (1+\gamma^{-1})^2$, then 
\begin{equation}\label{eqn2.3}
M^{1/2}\biggl[\frac{(1-\gamma^{-1})^2}{\ell_1+\ell_1\gamma^{-2}/(\ell_1-1)}\kappa^2 - 1\biggl] \to
\frac{\sqrt{1-[\gamma(\ell_1-1)]^{-2}}}{1+\gamma^{-2}(\ell_1-1)^{-1}}Y_{G_{k_2}}.
\end{equation}
\item
If $\ell_N< (1-\gamma^{-1})^2, \ell_1> (1+\gamma^{-1})^2$, then 
\begin{multline}\label{eqn2.4}
M^{1/2}\biggl[\frac{\ell_N-\ell_N\gamma^{-2}/(1-\ell_N)}{\ell_1+\ell_1\gamma^{-2}/(\ell_1-1)}\kappa^2 - 1\biggl]  \\ \to
\frac{\sqrt{1-[\gamma(1-\ell_N)]^{-2}}}{1-\gamma^{-2}(1-\ell_N)^{-1}}X_{G_{k_1}} + \frac{\sqrt{1-[\gamma(\ell_1-1)]^{-2}}}{1+\gamma^{-2}(\ell_1-1)^{-1}}Y_{G_{k_2}}.
\end{multline}
\end{enumerate}
Here $X, Y$ are two independent random variables with distribution specified in their subscripts. The convergences in (\ref{eqn2.1} -- \ref{eqn2.4}) all mean convergence in distribution. 
\end{corollary}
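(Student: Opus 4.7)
The plan is to derive Corollary~\ref{cor2.1} from Theorem~\ref{thm2} by pure algebra together with Slutsky's theorem. Writing $\sigma_1\geq\cdots\geq\sigma_N$ for the singular values of $X$, one has $\sigma_i^2=M\lambda_i$ and therefore $\kappa^2=\sigma_1^2/\sigma_N^2=\lambda_{\max}/\lambda_{\min}$. In particular $\kappa^2\to C_{\max}/C_{\min}$ in probability, where $C_{\min}, C_{\max}$ denote the almost-sure limits of $\lambda_{\min}, \lambda_{\max}$ from (\ref{conv1}) (namely $(1\mp\gamma^{-1})^2$ in the subcritical regimes, and the shifted Baik--Silverstein values in the supercritical regimes).

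The first step is to rewrite the left-hand side of each of (\ref{eqn2.1})--(\ref{eqn2.4}) in the uniform form
\[
\frac{C_{\min}}{C_{\max}}\kappa^2-1=\frac{C_{\min}(\lambda_{\max}-C_{\max})-C_{\max}(\lambda_{\min}-C_{\min})}{C_{\max}\lambda_{\min}}.
\]
The denominator converges in probability to $C_{\max}C_{\min}>0$, so by Slutsky it suffices to control the numerator. Inverting (\ref{eqn1.10})--(\ref{eqn1.11}), each of the deviations $\lambda_{\max}-C_{\max}$ and $\lambda_{\min}-C_{\min}$ equals an explicit constant times $a_M\widetilde{\lambda}_{\max}$ or $a_M\widetilde{\lambda}_{\min}$, where $a_M$ is $M^{-2/3}$ in the subcritical regime and $M^{-1/2}$ in the supercritical regime. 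Substituting, Theorem~\ref{thm2} together with the continuous mapping theorem identifies the limit as a linear combination of the independent random variables with distributions $F_{k_1}, F_{k_2}, G_{k_1}$ or $G_{k_2}$.

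Finally the four regimes are handled separately. In cases 1 and 4 both deviations decay at the same rate, and multiplying by that rate yields a sum of two independent limits, namely (\ref{eqn2.1}) and (\ref{eqn2.4}). In the asymmetric cases 2 and 3 one deviation decays at rate $M^{-2/3}$ and the other at rate $M^{-1/2}$; multiplying by the slower rate $M^{1/2}$ annihilates the $M^{-2/3}$ contribution because $M^{1/2}\cdot M^{-2/3}=M^{-1/6}\to 0$, so only a single random limit survives, producing (\ref{eqn2.2}) and (\ref{eqn2.3}). The only step demanding any care is the algebraic simplification of the resulting coefficients --- for instance verifying $(\gamma\pm 1)^{4/3}/[\gamma(1\pm\gamma^{-1})^2]=\gamma/(\gamma\pm 1)^{2/3}$ in case 1, and the corresponding identities involving $\ell_N$ or $\ell_1$ in the $G_k$ cases, which reduce to factoring the square roots $\sqrt{\ell^2-\ell^2\gamma^{-2}/(1\mp\ell)^{-2}}$ against the shifted Baik--Silverstein values. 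No serious obstacle is anticipated: the proof is essentially a bookkeeping exercise built on Theorem~\ref{thm2}, with the main pitfall being keeping track of signs and scaling exponents across the four cases.
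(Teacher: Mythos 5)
Your proposal is correct and takes essentially the same route as the paper: rewrite the statistic as a linear combination of $\widetilde{\lambda}_{\min}$ and $\widetilde{\lambda}_{\max}$ divided by a factor converging in probability to a positive constant, then invoke Theorem~\ref{thm2} together with Slutsky's lemma. The paper carries out the algebra explicitly only for case~1 via (\ref{eqn2.5}) and asserts the other three cases are analogous; your additional observation that in cases~2 and~3 the $M^{-2/3}$-rate deviation is killed by the $M^{1/2}$ normalization, so that only a single limit survives, is precisely the detail the paper leaves implicit (modulo a small typo in your remark: the square roots are $\sqrt{\ell^2-\ell^2\gamma^{-2}/(1\mp\ell)^{2}}$, with $(1\mp\ell)^{2}$ in the denominator, not $(1\mp\ell)^{-2}$).
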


%
%
\begin{proof}
Here we just prove the corollary for case 1. By $\widetilde{\lambda}_{\min}, \widetilde{\lambda}_{\max}$ in (\ref{eqn1.10}) and (\ref{eqn1.11}) we have
\begin{equation}\label{eqn2.5}
M^{2/3}\biggl[\Bigl(\frac{1-\gamma^{-1}}{1+\gamma^{-1}}\Bigl)^2\frac{\lambda_{\max}}{\lambda_{\min}} - 1\biggl]  =  \frac{(1-\gamma^{-1})}{\lambda_{\min}}\biggl[\frac{\gamma}{(\gamma-1)^{2/3}}\widetilde{\lambda}_{\min} +  \frac{\gamma}{(\gamma+1)^{2/3}}\widetilde{\lambda}_{\max}\biggl].
\end{equation}
In (\ref{eqn2.5}) the first factor will converge  to one in probability. By Theorem \ref{thm2} the random variable in the bracket will convergence to the right hand side of (\ref{eqn2.1}). By Slutsky's lemma our proof is complete. The proofs for the other three cases are the same.
\end{proof}
%
%

%
%
\subsection{Hypothesis Testing}\label{sec2.2}
In this section we discuss one way to test whether the columns of the data matrix $X$ are i.i.d. samples from complex normal distribution of mean zero and covariance matrix $\Sigma$. This is our null hypothesis to be tested. Note that sometimes our goal is just to test whether our data  is just pure noise, where we have $\Sigma = I$. But the method discussed in this subsection can be generalized to any other covariance matrix $\Sigma$ as long as it is of finite rank perturbation of the identity. 

To save space, here we just discuss the case where $\ell_N\geq (1-\gamma^{-1})^2, \ell_1\leq (1+\gamma^{-1})^2$. The method for other cases can also be very similarly obtained. 
By Theorem \ref{thm2}, we know that $\lambda_{\max}, \lambda_{\min}$ will converge to $(1+\gamma^{-1})^2$ and $(1-\gamma^{-1})^2$, respectively. Intuitively, if the observed two extreme eigenvalues of the sample covariance matrix deviates from the two limits by a large amount, then probably we should reject the null hypothesis. However, here we are testing the null hypothesis using two criteria. We claim that $\lambda_{\min}$ should be close to $(1-\gamma^{-1})^2$ \emph{and} $\lambda_{\max}$ should be close to $(1+\gamma^{-1})^2$. Thus we need to calculate the joint distribution of the two. Hence our Theorem \ref{thm2} plays an important role here --- it claims that they are asymptotically independent. 

Here's our method to test the null hypothesis, step by step.
\begin{enumerate}
\item
Form the sample covariance matrix $S = XX^*/M$ from the data matrix $X$. Then calculate the observed maximum and minimum eigenvalue of it, denoted by $(\lambda_{\max}, \lambda_{\min})$.
\item
Use (\ref{eqn1.10}) and (\ref{eqn1.11}) to calculate the centered and scaled version of $(\lambda_{\max}, \lambda_{\min})$, denoted by $(\widetilde{\lambda}_{\max}, \widetilde{\lambda}_{\min})$.
\item
Calculate our statistic $T := (1-F_{k_1}(\widetilde{\lambda}_{\min}))(1-F_{k_2}(\widetilde{\lambda}_{\max}))$. If $T\leq \alpha$ where $\alpha$ is a pre-determined confidence level, then we reject the hypothesis. Otherwise we accept it. 
\end{enumerate}

\section{Determinantal Form of the Distribution}\label{sec3}
In this section we will derive the determinantal form of the distribution of the extreme eigenvalues. This formula will be the basis for future saddle point analysis from Section \ref{sec4} to \ref{sec6}. From now on, we will always denote $C, c$ as some positive constants independent of $M, N$. Their value may vary from line to line, but they are always constants. 

For notational convenience, we define $\pi_i = \ell_i^{-1}$ for $i = 1, \ldots, N$. Then if $\pi_i$'s are distinct, the joint probability density of $(\lambda_1, \ldots, \lambda_N)$ can be written as the following form
\begin{equation}\label{den}
p(\lambda_1, \ldots, \lambda_N) = \frac{1}{C}\det\Bigl(\exp(-M\pi_j\lambda_k)\Bigl)_{j, k = 1}^N\cdot\frac{V(\lambda)}{V(\pi)}\prod_{j=1}^N\lambda_j^{M-N}.
\end{equation}
for some constant $C>0$. Here $V(x) := \prod_{i<j}(x_i-x_j)$ is the Vandermonde determinant. If some of the $\pi_j$'s coincide, we interpret (\ref{den}) using the L'H\'{o}pital's rule. 

For any two real numbers $0<\xi_1<\xi_2<\infty$, let $J_1, J_2$ be two disjoint intervals on $\mathbb{R}$ such that  
\[
J_1 = (0, \xi_1), \qquad J_2 = (\xi_2, \infty).
\]
Moreover, we define $J:= J_1\sqcup J_2$ and we denote $E(J, 0)$ as the probability that there are no eigenvalues in the set $J$. That is, 
\[
E(J, 0) = \mathbb{P}(\xi_1\leq\lambda_{\min}\leq\lambda_{\max}\leq\xi_2).
\]
Our first proposition gives a determinantal representation of $E(J, 0)$. This determinant form will turn out to be an invaluable formula for future analysis.

%
%
\begin{proposition}\label{prop1}
We have 
\begin{equation}\label{eqn4}
E(J, 0) = \det\biggl(I - 
\left(
\begin{array}{cc} 
K_{11} & K_{12} \\
K_{21} & K_{22}\\ 
\end{array}
\right)
\biggl)
\end{equation}
where for $\alpha, \beta \in\{1, 2\}$ $K_{\beta, \alpha}$ is the integral operator on $L^2(J_\alpha)\to L^2(J_\beta)$ with the kernel
\begin{equation}\label{eqn5}
K_{\beta, \alpha}(\eta, \zeta)  =  
-\frac{M}{(2\pi)^2}\int_{\Gamma_\beta} dz\int_{\Sigma_\alpha} dw e^{-\eta M(z-q_\beta)+\zeta M(w-q_\alpha)}\frac{1}{w-z}\prod_{k=1}^N\frac{w-\pi_k}{z-\pi_k}\cdot\biggl(\frac{z}{w}\biggl)^M. 
\end{equation}
Here $q_1, q_2$ are two pre-fixed constants such that 
\begin{equation}\label{con}
0 < q_2 < \min\{\pi_j\}_{j=1}^N \leq \max\{\pi_j\}_{i=1}^N < q_1. 
\end{equation}
Moreover $\Gamma_{\beta} (\beta = 1, 2)$ is the contour on $\mathbb{C}$ enclosing $\{\pi_i\}_{i=1}^N$ and lying in the strip
\[
\Gamma_{\beta} \subseteq \{z\in\mathbb{C}: q_2< \Re(z)<q_1\}. 
\]
$\Sigma_1 = \{z = A+it: t\in\mathbb{R}\}$ is a vertical line from bottom to top for $A > q_1$. $\Sigma_2$ is counter clockwise circle enclosing the origin and lies in $\Sigma_2\subseteq\{z: \Re(z)<q_1\}$. These contours are sketched in Figure \ref{fig1}. 
\end{proposition}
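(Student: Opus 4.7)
The plan is to go from the explicit joint density (\ref{den}) to the block Fredholm determinant in three stages: first, recognize the density as a biorthogonal ensemble and so as a determinantal point process; second, rewrite the correlation kernel as a double contour integral; third, decompose it across the disjoint union $J=J_1\sqcup J_2$ and use an exponential similarity to match the form (\ref{eqn5}).

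For the first stage, I would absorb $\prod_j\lambda_j^{M-N}$ into the Vandermonde to obtain $V(\lambda)\prod_j\lambda_j^{M-N}=\det(\lambda_k^{M-N+j-1})_{j,k}$, so that (\ref{den}) becomes $p\propto\det(e^{-M\pi_j\lambda_k})\det(\lambda_k^{M-N+j-1})$. This is the biorthogonal (Borodin, Eynard--Mehta) form, under which the process is determinantal on $(0,\infty)$ with kernel
$$
K(x,y) = \sum_{j,k=1}^N e^{-M\pi_j x}\,(G^{-1})_{jk}\,y^{M-N+k-1},\qquad G_{jk} = \int_0^\infty e^{-M\pi_j \lambda}\,\lambda^{M-N+k-1}\,d\lambda,
$$
and the standard gap-probability identity gives $E(J,0)=\det(I-K|_{L^2(J)})$, which is already the Fredholm determinant on the right of (\ref{eqn4}) in disguise.

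For the second stage, I would represent each factor by a contour integral: $e^{-M\pi_j x}=\frac{1}{2\pi i}\oint_{\pi_j}e^{-Mxz}/(z-\pi_j)\,dz$ around a small loop at $\pi_j$, and $y^{M-N+k-1}$ via a Hankel-type integral on a contour about $0$. Substituting these into $K(x,y)$ turns the sum on $k$ into a geometric series whose sum produces the factor $\prod_k(w-\pi_k)/(z-\pi_k)\cdot(z/w)^M/(z-w)$, and deforming the inner loops to a single loop $\Gamma$ enclosing all of $\{\pi_k\}$ while keeping the $w$-contour $\Sigma$ disjoint yields
$$
K(x,y) = -\frac{M}{(2\pi)^2}\int_\Gamma dz \int_\Sigma dw\,\frac{1}{w-z}\prod_{k=1}^N\frac{w-\pi_k}{z-\pi_k}\Bigl(\frac{z}{w}\Bigr)^M e^{-Mxz+Myw},
$$
which is the double contour integral representation derived for the spiked covariance model in \cite{1}.

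For the third stage, the disjoint union $J=J_1\sqcup J_2$ splits $L^2(J)=L^2(J_1)\oplus L^2(J_2)$, under which $K|_{L^2(J)}$ becomes a $2\times 2$ block of operators $K_{\beta,\alpha}$ from $L^2(J_\alpha)$ to $L^2(J_\beta)$, matching (\ref{eqn4}). To reach the precise form (\ref{eqn5}), I would conjugate by the block-diagonal operator whose $\beta$-block is multiplication by $e^{M q_\beta \eta}$; this similarity leaves the Fredholm determinant invariant and rewrites the exponential in each block as $e^{-M\eta(z-q_\beta)+M\zeta(w-q_\alpha)}$. The specific contour choices in the statement then reflect the requirements that $\Gamma_\beta$ enclose $\{\pi_k\}$, that $\Gamma_\beta$ and $\Sigma_\alpha$ be disjoint, and that the shifted exponentials $e^{-M\eta(z-q_\beta)}$ and $e^{M\zeta(w-q_\alpha)}$ remain integrable on $J_\beta$ and $J_\alpha$ respectively; this forces the strip $q_2<\Re z<q_1$ for $\Gamma_\beta$, a right-half-plane vertical line for $\Sigma_1$, and a small loop about $0$ within $\{\Re z<q_2\}$ for $\Sigma_2$, in accordance with (\ref{con}). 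The main obstacle is the second stage, where the closed-form evaluation of the $k$-sum and the subsequent contour deformations rely on the specific shape of $G_{jk}\propto(M-N+k-1)!/(M\pi_j)^{M-N+k}$ allowing the geometric series to telescope under carefully chosen contours, and where the inversion of the Gram matrix is handled implicitly through a residue argument rather than by explicit computation; once the contour integral form of $K$ is in hand, the block decomposition and the exponential conjugation are essentially formal.
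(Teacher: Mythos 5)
Your proposal reaches the same determinantal representation by essentially the same route the paper (and \cite{1}) take, but the packaging differs in two respects worth noting. First, where you invoke the general biorthogonal-ensemble/Eynard--Mehta machinery (absorb $\prod_j\lambda_j^{M-N}$ into the Vandermonde, identify the correlation kernel $K=\sum\Phi_j(G^{-1})_{jk}\phi_k$, and then use the gap-probability identity), the paper works directly from the density: it applies Andr\'eief's identity to get $E(J,0)=\tfrac{1}{C}\det(A-FG)$, uses Sylvester's determinant identity to turn this into $C'\det(I-GA^{-1}F)$, and pins down $C'=1$ by sending $J\to\emptyset$. These two derivations compute the same object, but the paper's version avoids having to justify the gap-probability formula abstractly and yields the normalization constant essentially for free, whereas yours is more modular. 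Second, you introduce the shift constants $q_\beta$ at the end via a block-diagonal exponential conjugation $e^{Mq_\beta\eta}K_{\beta\alpha}e^{-Mq_\alpha\zeta}$, while the paper builds them into $\phi_j^{(i)}(\lambda)=\lambda^{j-1+M-N}e^{-Mq_i\lambda}$ and $\Phi_k^{(i)}(\lambda)=e^{-M(\pi_k-q_i)\lambda}$ from the outset so that these functions lie in $L^2(J)$ before any operator algebra begins; for a finite-rank kernel the conjugation is harmless and the Fredholm determinant is invariant, so both are valid, but the paper's ordering keeps everything manifestly well-defined throughout. One small imprecision in your stage two: evaluating $\sum_k(G^{-1})_{jk}\Gamma(k+M-N)/(Mw)^{k+M-N}$ is not really a geometric-series sum; it is a Lagrange-interpolation/residue computation exploiting the Vandermonde structure of the Gram matrix, which is what produces the $\prod_k(w-\pi_k)/(z-\pi_k)$ factor. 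You flag this step as the main obstacle yourself, and since the paper defers exactly this computation to Proposition 2.1 of \cite{1}, the sketch is at the same level of rigor as the paper's own proof.
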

\begin{figure}[!h]
    \centering
    \includegraphics[scale=.6]{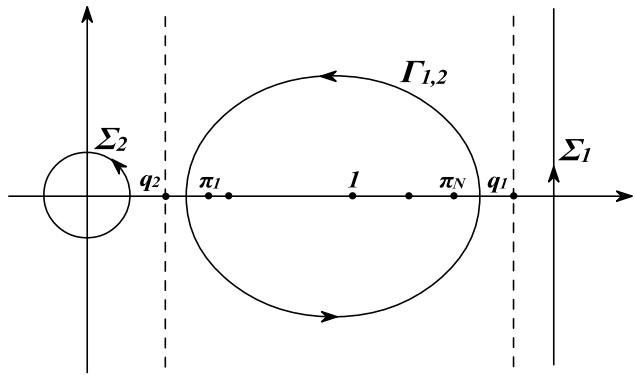}
    \caption{Graph for the contours $\Sigma_1, \Sigma_2$ and $\Gamma$}
    \label{fig1}
\end{figure}
\begin{remark}
Here $q_1, q_2$ can be any  constants as long as (\ref{con}) is satisfied. They will not affect the determinant in (\ref{eqn4}). 
\end{remark}

%
%
\begin{proof}
The proof is almost the same to the one in \cite{1}. Thus we just point out our major differences. The rest will be quite sketchy.
Since we have an explicit expression for the probability density (\ref{den}), we can express $E(J, 0)$ into the following (for details, see (66) of \cite{1}). 
\begin{eqnarray*}
E(J, 0) & = & \frac{1}{C}\int_0^\infty\ldots\int_0^\infty\det_{j, k = 1}^N(\lambda_k^{j-1})\det_{j, k = 1}^N(e^{-M\pi_j\lambda_k})\prod_{k=1}^N(1-\chi_J(\lambda_k))\lambda_k^{M-N}d\lambda_k \\
& = & \frac{1}{C}\det\biggl(\int_0^\infty(1-\chi_J(\lambda))\lambda^{j-1+M-N}e^{-M\pi_k\lambda}d\lambda\biggl)_{j, k = 1}^N \\
& = & \frac{1}{C}\det\biggl(A_{jk} - \int_J\lambda^{j-1+M-N}e^{-M\pi_k\lambda}d\lambda\biggl)_{j, k = 1}^N.
\end{eqnarray*}
Here $\chi_J(\cdot)$ is the indicator function of the set $J$ and $A_{jk}$ is defined to be
\[
A_{jk} := \int_0^\infty\lambda^{j-1+M-N}e^{-M\pi_k\lambda}d\lambda
\]
If we define $A = (A_{jk})_{j, k=1}^N$, then we can show that 
\[
\det A = \prod_{j=1}^N\frac{\Gamma(j+M-N)}{(M\pi_j)^{M-N}}\cdot\prod_{1\leq j<k\leq N}\biggl(\frac{1}{M\pi_j}-\frac{1}{M\pi_k}\biggl).
\]
Now we define $\phi_j^{(i)}(\lambda) = \lambda^{j-1+M-N}e^{-Mq_i\lambda}$ and $\Phi_k^{(i)}(\lambda) = e^{-M(\pi_k-q_i)\lambda}$ for $i = 1, 2$. Here $q_1, q_2$ are two pre-fixed constant such that condition (\ref{con}) is satisfied.
Then we observe $\phi^{(i)}_j(\lambda), \Phi^{(i)}_j(\lambda)\in L^2(J)$. 
We also define the operators $F: L^2(J)\to\mathbb{R}^N, G: \mathbb{R}^N\to L^2(J)$ by 
\begin{eqnarray*}
F:  f(\lambda) & \mapsto & \biggl(\int_{J_1}\phi^{(1)}_j(\lambda)f(\lambda)d\lambda+\int_{J_2}\phi^{(2)}_j(\lambda)f(\lambda)d\lambda\biggl)_{j=1}^N \\
G: \alpha\in\mathbb{R}^N & \mapsto & \sum_{j=1}^N\alpha_j\Phi^{(1)}_j(\lambda)\chi_{J_1}(\lambda)+\sum_{j=1}^N\alpha_j\Phi^{(2)}_j(\lambda)\chi_{J_2}(\lambda).
\end{eqnarray*}
Then we can re-write $E(J, 0)$ as 
\begin{eqnarray}
E(J, 0) & = & \frac{1}{C}\det(A - FG) = \frac{\det A}{C}\det(I-A^{-1}FG) \nonumber\\
& = & C'\det(I - GA^{-1}F). \label{eqn1}
\end{eqnarray}
Please note that $GA^{-1}F$ is an operator on $L^2(J)\to L^2(J)$ and $C'$ is a constant independent of $J$. We note that $GA^{-1}F = P_JGA^{-1}FP_J$ can be also regarded as an operator on $L^2(0, \infty)$. Recall that $J = (0, \xi_1)\sqcup(\xi_2, \infty)$. With $\xi_1 \to 0$ and $\xi_2\to\infty$, $GA^{-1}F$ will converge to zero. Hence take $J\to\emptyset$ in (\ref{eqn1}) gives $C' = 1$. 

Now we note that $L^2(J) = L^2(J_1\sqcup J_2)$ is isomorphic to $L^2(J_1)\oplus L^2(J_2)$ under the trivial isomorphism 
\[
f\in L^2(J_1\sqcup J_2) \mapsto (f|_{J_1}, f|_{J_2}) \in L^2(J_1)\oplus L^2(J_2).
\]
Thus, under the isomorphism, (\ref{eqn1}) can be further written as 
\begin{equation}\label{eqn2}
E(J, 0) = \det\biggl(I - 
\left(
\begin{array}{cc} 
P_{J_1}GA^{-1}FP_{J_1} & P_{J_1}GA^{-1}FP_{J_2} \\
P_{J_2}GA^{-1}FP_{J_1} & P_{J_2}GA^{-1}FP_{J_2}\\ 
\end{array}
\right)
\biggl)
\end{equation}
Here $P_{J_\beta}GA^{-1}FP_{J_\alpha}$ is an operator on $L^2(J_\alpha)\to L^2(J_\beta)$.  Following the rest of the proof of Proposition 2.1 in \cite{1}, we can obtain that, for $\alpha, \beta\in\{1, 2\}$,
\begin{multline}
(P_{J_\beta}GA^{-1}FP_{J_\alpha})(\eta, \zeta)  =  \\
-\frac{M}{(2\pi)^2}\int_{\Gamma_\beta} dz\int_{\Sigma_\alpha} dw e^{-\eta M(z-q_\beta)+\zeta M(w-q_\alpha)}\frac{1}{w-z}\prod_{\ell=1}^N\frac{w-\pi_k}{z-\pi_\ell}\cdot\biggl(\frac{z}{w}\biggl)^M. \label{m1}
\end{multline}
Here $\Gamma_\beta$ and $\Sigma_\alpha$ are the contours defined in Proposition \ref{prop1}. The only difference is that in our case, instead of using equation (76) in \cite{1}, we used 
\[
\lambda^{j-1+M-N} = \frac{\Gamma(j+M-N)}{2\pi i}\int_{\Sigma_\beta}e^{\lambda Mw}\frac{M}{(Mw)^{j+M-N}}dw
\]
for both $\beta = 1, 2$, as long as $M-N = (\gamma-1)N$ is large enough. 
\end{proof}
%
%

If we consider the special case of Proposition \ref{prop1} when $J_1 = (0, \xi)$ and $J_2 = \emptyset$, then we arrive at the following Corollary.
%
%
\begin{corollary}\label{corollary1}
We have 
\begin{equation}\label{eqn6}
\mathbb{P}(\lambda_{\min}\geq\xi) = \det(I - 
K_{11})
\end{equation}
where $K_{11}$ is the integral operator on $L^2(0, \xi)\to L^2(0, \xi)$ with the kernel defined in (\ref{eqn5}).
\end{corollary}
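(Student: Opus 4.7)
The plan is to derive this as the degenerate case of Proposition \ref{prop1} in which $J_2 = \emptyset$. First I would set $J_1 = (0,\xi)$ and regard the statement either as the limit $\xi_2 \to \infty$ in Proposition \ref{prop1}, or, more cleanly, as an instance of the same proof executed with a single interval. In either interpretation, $J = J_1 \sqcup J_2$ collapses to $(0,\xi)$, and the event $\{\xi_1 \le \lambda_{\min} \le \lambda_{\max} \le \xi_2\}$ becomes $\{\lambda_{\min} \ge \xi\}$, so $E(J, 0) = \mathbb{P}(\lambda_{\min} \ge \xi)$, supplying the left-hand side of (\ref{eqn6}).

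For the right-hand side, I would argue that the three blocks $K_{12}, K_{21}, K_{22}$ all drop out. In the limiting approach, the kernel $K_{\beta,\alpha}(\eta,\zeta)$ in (\ref{eqn5}) carries the factor $e^{\zeta M(w-q_\alpha)}$, which yields exponential decay in $\zeta$ once $\Sigma_\alpha$ is placed so that $\Re(w) < q_\alpha$ on it (for $\alpha = 2$, by shrinking $\Sigma_2$ strictly to the left of $q_2$, which is consistent with it enclosing $0$ because $q_2 > 0$; for $\alpha = 1$, the vertical line $\Sigma_1$ already has $\Re(w) = A > q_1 > q_2$, so $\Re(w) > q_1 = q_\alpha$ and one instead uses a contour deformation of $\Gamma_\beta$ to gain decay in $\eta$). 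Either way, the Hilbert--Schmidt norms of the three off-$(1,1)$ blocks, integrated against $L^2(\xi_2,\infty)$, tend to zero as $\xi_2 \to \infty$. Then the block operator in (\ref{eqn4}) becomes block triangular with zero in its $(2,2)$ slot, and the Fredholm determinant reduces to $\det(I - K_{11})$.

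Even simpler, I would prefer to re-run the proof of Proposition \ref{prop1} in the one-interval setting. With $J = J_1$ alone, the operators $F$ and $G$ retain only their first components, the isomorphism $L^2(J_1\sqcup J_2) \simeq L^2(J_1)\oplus L^2(J_2)$ is unnecessary, and the identity $E(J, 0) = \det(I - GA^{-1}F)$ from (\ref{eqn1}) becomes directly $\det(I - K_{11})$ once the same double-contour representation as in (\ref{m1}) is applied. This version contains no limiting step and is nothing more than a bookkeeping simplification of the already-established proof. The only step requiring a moment's care is the justification, in the limiting approach, that the three vanishing blocks really are trace-class small as $\xi_2 \to \infty$; this is purely a contour-placement exercise within the permitted region $\{q_2 < \Re(z) < q_1\}$ and introduces no new ideas.
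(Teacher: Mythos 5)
Your second approach — re-running the proof of Proposition \ref{prop1} with the single interval $J = J_1 = (0,\xi)$, so that $E(J,0)=\mathbb{P}(\lambda_{\min}\geq\xi)=\det(I-GA^{-1}F)=\det(I-K_{11})$ — is exactly what the paper does; it records the corollary as the special case $J_2=\emptyset$ with no separate argument. Your alternative limiting argument ($\xi_2\to\infty$) is also sound but is unnecessary extra machinery.
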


As the first goal of this paper, we will focus on the distribution of the smallest eigenvalue $\lambda_{\min}$. Due to Corollary \ref{corollary1}, we can in turn analyze the asymptotic behavior of the operator $K_{11}$. 

First we note that the kernel function $K_{11}(\eta, \zeta)$ can be simplified. Since the contour $\Gamma$ is always on the left of $\Sigma_1$, then $\forall z\in\Gamma_1, w\in\Sigma_1$ we have $\Re(z-w)<0$. Hence we have 
\begin{equation}\label{eqn7}
\frac{1}{w-z} = M\int_0^\infty\exp\biggl\{-yM(w-q_1-(z-q_1))\biggl\}dy.
\end{equation}
Substitute (\ref{eqn7}) into (\ref{eqn5}), we obtain 
\begin{equation}\label{eqn8}
K_{11}(\eta, \zeta) = -\int_0^\infty H_1(y-\eta)J_1(y-\zeta)dy
\end{equation}
where
\begin{eqnarray}
H_1(\eta) &=& \frac{M}{2\pi}\int_{\Gamma_1} \exp\biggl\{\eta M(z-q_1)\biggl\}\cdot z^M\prod_{\ell=1}^N\frac{1}{z-\pi_\ell}dz, \label{eqn9}\\
J_1(\zeta) & = & \frac{M}{2\pi}\int_{\Sigma_1} \exp\biggl\{-\zeta M(w-q_1)\biggl\}\cdot \omega^{-M}\prod_{\ell=1}^N(\omega - \pi_\ell)d\omega\label{eqn10}. 
\end{eqnarray}

Now we use the change of variables. Define
\begin{equation}\label{eqn11}
\xi = \mu_1 - \frac{\nu_1}{M^{\alpha}}x, \qquad \eta = \mu_1 - \frac{\nu_1}{M^{\alpha}}(x+u), \qquad \zeta = \mu_1 - \frac{\nu_1}{M^{\alpha}}(x+v)
\end{equation}
where $\mu_1, \nu_1$ and $\alpha$ are constants that will be defined later. Under such change of variables 
\[
\mathbb{P}(\lambda_{\min}\geq\xi) = \det(I-K_{11}|_{(0, \xi)}) = \det(I-K_{11}|_{(0, \mu_1-\nu_1x/M^\alpha)}) = \det(I-\mathcal{K}_{11}|_{(0, \mu_1M^{\alpha}/\nu_1-x)})
\]
where
\begin{eqnarray}
\mathcal{K}_{11}(u, v) & = & \frac{\nu_1}{M^\alpha}K_{11}\biggl(\mu_1-\frac{\nu_1}{M^{\alpha}}(x+u), \mu_1-\frac{\nu_1}{M^{\alpha}}(x+v)\biggl) \nonumber\\
& = & -\int_0^\infty\mathcal{H}_1(y+x+u)\mathcal{J}_1(y+x+v)dy \label{eqn14}.
\end{eqnarray}
Here 
\begin{eqnarray}
\mathcal{H}_1(u)& =& \frac{\nu M^{1-\alpha}}{2\pi}\int_\Gamma e^{-\mu_1M(z-q)+\nu M^{1-\alpha}u(z-q_1)}\cdot z^M\prod_{\ell=1}^N\frac{1}{z-\pi_\ell}dz,\label{eqn12} \\
\mathcal{J}_1(v) &=& \frac{\nu M^{1-\alpha}}{2\pi}\int_{\Sigma_1}e^{\mu_1M(\omega-q)-\nu M^{1-\alpha}v(\omega-q_1)}\cdot \omega^{-M}\prod_{\ell=1}^N(\omega-\pi_\ell)d\omega.\label{eqn13}
\end{eqnarray}
Under the assumption that only $r_1 + r_2$ of these $\pi_k$'s are not one, we can further write $\mathcal{H}_1(u), \mathcal{J}_1(v)$ as 
\begin{eqnarray}
\mathcal{H}_1(u)& =& \frac{\nu M^{1-\alpha}}{2\pi}\int_\Gamma e^{\nu M^{1-\alpha}u(z-q_1)+Mf_1(z)}\cdot \prod_{\ell\leq r_2 \text{ or } \ell\geq N-r_1+1}\frac{z-1}{z-\pi_\ell}dz,\label{eqn15} \\
\mathcal{J}_1(v) &=& \frac{\nu M^{1-\alpha}}{2\pi}\int_{\Sigma_1}e^{-\nu M^{1-\alpha}v(\omega-q_1)-Mf_1(\omega)}\cdot \prod_{\ell\leq r_2 \text{ or } \ell\geq N-r_1+1}\frac{\omega-\pi_\ell}{\omega-1}d\omega.\label{eqn16}
\end{eqnarray}
Here
\begin{equation}\label{eqn19}
f_1(z) := -\mu_1(z-q_1)+\log z - \log(z-1)/\gamma^2
\end{equation}
where the $\log$ function is defined in the principal branch. 
According to the discussion in \cite{1}, we just need to find suitable limiting functions $\mathcal{H}_{1, \infty}(u), \mathcal{J}_{1, \infty}(v)\in L^2(0, \infty)$ and a constant $Z_M$ such that, for any $x$ in a compact set,
\begin{eqnarray}
\int_0^\infty\int_0^\infty |Z_M\mathcal{H}_1(x+u+y)-\mathcal{H}_{1, \infty}(x+u+y)|^2dudy\to 0, \label{eqn17}\\
\int_0^\infty\int_0^\infty |Z_M^{-1}\mathcal{J}_1(x+u+y)-\mathcal{J}_{1, \infty}(x+u+y)|^2dudy\to 0. \label{eqn18}
\end{eqnarray}
In the following two sections we will use the saddle point analysis on $f_1(z)$ to realize our promise. 

\section{Proof of Part 1 of Theorem \ref{thm1}}\label{sec4}
In this section we will prove Theorem \ref{thm1} for the case $\ell_N\geq 1-\gamma^{-1}$. Since in this section and the next section we will focus on only $\mathcal{K}_{11}$ defined above, thus we temporarily drop all the subscript one, for notational simplicity. For example, instead of writing $\mu_1, \nu_1, \Gamma_1, f_1(z), \mathcal{H}_1(u), \mathcal{H}_{1, \infty}(u)$, we will write $\mu, \nu, \Gamma, f(z), \mathcal{H}(u), \mathcal{H}_\infty(u)$. Note that these are only valid in this and the next section.

We assume that for some $0\leq k\leq r_1$ (recall $\pi_i^{-1} = \ell_i$), 
\[
\pi_{N}^{-1} = \pi_{N-1}^{-1} = \ldots = \pi_{N-k+1}^{-1} = 1-\gamma^{-1} 
\]
and the rest of the $\pi^{-1}_i$'s are in a compact subset of $(1-\gamma^{-1}, \infty)$. The rest of the job is to provide a saddle point analysis of $\mathcal{H}(u)$ and $\mathcal{J}(v)$. 

In this case, we can define our constants to be
\begin{equation}\label{eqn20}
\alpha = 2/3, \quad\mu = \biggl(1-\frac{1}{\gamma}\biggl)^2, \quad \nu = \frac{(\gamma-1)^{4/3}}{\gamma}, \quad p = \frac{\gamma}{\gamma-1}, \quad q = p + \frac{\epsilon}{\nu M^{1/3}}. 
\end{equation}
Here $\epsilon>0$ is a pre-fixed smaller number and  $p$ will be later shown to be the saddle point of $f(z)$. We also define 
\begin{equation}\label{eqn21} 
g(z) = \frac{1}{(z-1)^{r_1+r_2-k}}\prod_{\ell\leq r_2\text{ or } N-r_1+1\leq\ell\leq N-k}(z-\pi_\ell). 
\end{equation}
Then our $\mathcal{H}(u), \mathcal{J}(v)$ can be written as 
\begin{eqnarray}
\mathcal{H}(u)& =& \frac{\nu M^{1-\alpha}}{2\pi}\int_\Gamma e^{\nu M^{1-\alpha}u(z-q)}\cdot e^{Mf(z)}\cdot \frac{1}{g(z)(z-p)^k}dz,\label{eqn21.1} \\
\mathcal{J}(v) &=& \frac{\nu M^{1-\alpha}}{2\pi}\int_{\Sigma}e^{-\nu M^{1-\alpha}v(\omega-q)}\cdot e^{-Mf(\omega)}\cdot g(\omega)(\omega-p)^kd\omega.\label{eqn22}
\end{eqnarray}
Our constant $Z_M$ in this case is defined to be 
\begin{equation}\label{eqn23}
Z_M = \frac{g(p)}{(\nu M^{1/3})^ke^{Mf(p)}}.
\end{equation}
Finally, as we promised in (\ref{eqn15}) and (\ref{eqn16}), our $\mathcal{H}_\infty(u)$ and $\mathcal{J}_\infty(y)$ are defined as 
\begin{eqnarray}
\mathcal{H}_\infty(u)& = & \frac{e^{-\epsilon u}}{2\pi}\int_{\Gamma_{\infty}} e^{ua-a^3/3}\frac{1}{a^k}da, \label{eqn29} \\
\mathcal{J}_\infty(v) & = & \frac{e^{\epsilon u}}{2\pi}\int_{\Sigma_{\infty}} e^{-vb+b^3/3}b^kdb.\label{eqn30}
\end{eqnarray}
Here $\Gamma_{\infty}$ is the contour from $\infty e^{-2\pi/3}$ to $\infty e^{2\pi/3}$ such that $a=0$ lies on the left hand side of the contour. $\Sigma_{\infty}$ is the contour from $\infty e^{-\pi/3}$ to $\infty e^{\pi/3}$. 
The main target of this section is to prove the following Proposition.
%
%
\begin{proposition}\label{prop2}
Assume that $u$ and $v$ are bounded below. That is, assume that there exists some $U, V\in\mathbb{R}$ such that $u>U, v>V$. Then 
\begin{enumerate}
\item
There exists some constant $C, c>0, M_0>0$ such that uniformly for $u>U$, 
\begin{equation}\label{eqn31}
|Z_M\mathcal{H}(u)-\mathcal{H}_\infty(u)|\leq Ce^{-cu}M^{-1/3},  \qquad \forall M\geq M_0.
\end{equation}
\item
There exists some constant $C, c>0, M_0>0$ such that uniformly for $v>V$,
\begin{equation}\label{eqn32}
|Z_M^{-1}\mathcal{J}(v)-\mathcal{J}_\infty(v)|\leq Ce^{-cv}M^{-1/3}, \qquad \forall M\geq M_0.
\end{equation}
\end{enumerate}
\end{proposition}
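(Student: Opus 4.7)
The plan is a steepest-descent analysis of the single-contour integrals defining $\mathcal H(u)$ and $\mathcal J(v)$, focused at the critical point $p = \gamma/(\gamma-1)$ of $f$. A direct computation shows
\[
f'(p) = 0, \qquad f''(p) = 0, \qquad f'''(p) = -\frac{2(\gamma-1)^4}{\gamma^3},
\]
so $p$ is a degenerate cubic saddle. Using $\nu^3 = (\gamma-1)^4/\gamma^3$ together with the change of variable $a = \nu M^{1/3}(z-q)$, so that $z - p = (a+\epsilon)/(\nu M^{1/3})$, the local expansion reads
\[
M f(z) = M f(p) - \tfrac{1}{3}(a+\epsilon)^3 + O\bigl(M^{-1/3}|a+\epsilon|^4\bigr),
\]
while $\nu M^{1/3}\,u\,(z-q) = ua$ and $(z-p)^{-k} = (\nu M^{1/3})^k(a+\epsilon)^{-k}$. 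Since $g(z) = g(p)(1+O(|z-p|))$ is regular and nonzero at $p$, the normalizing constant $Z_M$ in \eqref{eqn23} is designed to cancel exactly the factors $e^{Mf(p)}$, $g(p)$ and $(\nu M^{1/3})^k$, so that $Z_M\mathcal H(u)$ formally tends to $\frac{1}{2\pi}\int_{\Gamma_\infty} e^{ua-(a+\epsilon)^3/3}(a+\epsilon)^{-k}\,da$; the shift $a \mapsto a - \epsilon$ then produces $\mathcal H_\infty(u)$, with the explicit $e^{-\epsilon u}$ in \eqref{eqn29} appearing automatically as a byproduct of this shift. The calculation for $\mathcal J(v)$ is parallel, with the factor $(\omega - p)^k$ supplying $b^k$ in $\mathcal J_\infty$ in place of the reciprocal power.

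To turn this into a rigorous estimate I would deform $\Gamma$ so that, inside a disk of radius $M^{-1/3+\delta}$ about $p$ (for some small $\delta > 0$), it consists of the two steepest-descent rays $p + r e^{\pm 2i\pi/3}$, $0 \le r \le M^{-1/3+\delta}$, and close it off through the half-plane $\{\Re z \le p - cM^{-1/3+\delta}\}$ while still enclosing every other $\pi_\ell$. This is possible by the assumption that the remaining $\pi_\ell$ lie in a compact subset of $(0,p)$. On the closing arc one has $\Re(f(z) - f(p)) \le -c$ uniformly in $M$, so after multiplication by $Z_M$ its contribution is $O(e^{-cM})$ and is negligible. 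On the local cross, the rescaled variable $a$ traces a truncated portion of $\Gamma_\infty$; Taylor's theorem gives the cubic expansion above with an $O(M^{-1/3+4\delta})$ remainder, the expansion $g(z)/g(p) = 1 + O(M^{-1/3+\delta})$ is uniform, and extending $a$ to the full contour $\Gamma_\infty$ introduces an error of size $O(\exp(-cM^{3\delta}))$ because the missing arcs satisfy $|a+\epsilon| \ge \nu M^\delta$ and the integrand decays like $\exp(-c|a+\epsilon|^3)$. The symmetric deformation of $\Sigma$ along the steepest-ascent directions $p + r e^{\pm i\pi/3}$ yields the analogous estimate for $\mathcal J$.

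The main obstacle is upgrading this pointwise $O(M^{-1/3})$ rate to the uniform envelope $Ce^{-cu}M^{-1/3}$ required in \eqref{eqn31} (and $Ce^{-cv}M^{-1/3}$ in \eqref{eqn32}), since that envelope is exactly what is needed so that the $L^2$ errors in \eqref{eqn17}--\eqref{eqn18} tend to zero. For $\mathcal H$, on the deformed contour one has $a = -\epsilon + r e^{\pm 2i\pi/3}$, hence $\Re(a) = -\epsilon - r/2$, so $|e^{ua}| \le e^{-\epsilon u - ru/2}$ for every $u \ge 0$; combined with $|e^{-(a+\epsilon)^3/3}| \le e^{-r^3/3}$ this shows that both the Taylor-remainder contribution and the tail contribution are bounded by $M^{-1/3}e^{-\epsilon u}$ times an integrable function of $r$, which is precisely the desired uniform envelope. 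For $u$ in any bounded range below zero the envelope $e^{-cu}$ is itself bounded and nothing further needs to be proved. The mirror argument for $\mathcal J(v)$ produces $e^{-cv}$ after noting that on the local piece of $\Sigma$ the variable $b$ satisfies $\Re(b) \ge -\epsilon + r/2$ and the cubic saddle-point asymptotics of the inner integral provide enough decay in $v$ to absorb the finite $e^{\epsilon v}$ prefactor coming from the shift $b \mapsto b - \epsilon$.
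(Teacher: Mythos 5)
Your proposal takes essentially the same steepest-descent route as the paper: a cubic saddle at $p=\gamma/(\gamma-1)$ with $f'(p)=f''(p)=0$, $f'''(p)=-2\nu^3$, the normalizer $Z_M$ cancelling the saddle-point data, and a split of the contour into a local piece (where a cubic Taylor expansion is compared to the Airy-type limit) and a far-field piece (where the integrand is exponentially small in $M$). Your use of a shrinking local window of radius $M^{-1/3+\delta}$ rather than the paper's fixed $\delta$-neighborhood is a legitimate alternative; your observation that $\Re(a)=-\epsilon-r/2$ on the rays gives the $e^{-cu}$ envelope is exactly the paper's mechanism.

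There is, however, a genuine gap at the most technical point. You write, for the far-field part, ``close it off through the half-plane $\{\Re z\leq p-cM^{-1/3+\delta}\}$ while still enclosing every other $\pi_\ell$,'' and then simply assert that $\Re(f(z)-f(p))\le -c$ uniformly on that closing piece. That assertion is not automatic: $f$ has logarithmic branch points at $z=0$ and $z=1$, both sitting between the $\pi_\ell$'s and the saddle $p$, and the closed contour enclosing all $\pi_\ell$'s must weave around them. The paper's Lemma~\ref{lemma1} is precisely the verification that a concretely designed contour (the steepest ray $\Gamma_1$, an arc $\Gamma_2$ of radius $(\sqrt{3}-1)(p-1)$ centered at $z=1$, optionally the vertical line $\Re z=\gamma/(\gamma+1)$, and near-axis segments $\Gamma_4,\Gamma_5$) achieves monotone decay of $\Re f$ along each leg; each piece requires computing $\frac{d}{dt}\Re f$ and checking its sign, and the near-axis legs additionally require choosing the offset $\kappa$ small by a separate continuity argument. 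The analogous issue arises for the unbounded $\Sigma$-contour in the $\mathcal J$ analysis, where the paper's Lemma~\ref{lemma4} establishes the logarithmic lower bound $\Re(f(\omega)-f(\omega^*))\ge C\log(t/(\sqrt3/2))$ needed to tame the infinite tail. Your proposal names none of this, and without it the ``exponentially negligible far-field'' step is unsupported. A secondary, smaller point: the $e^{-cv}$ envelope for the $\mathcal J$ error really comes from $\Re(\omega-q)\ge \epsilon/(2\nu M^{1/3})>0$ on the entire $\Sigma$-contour (so $e^{-\nu M^{1/3}v\Re(\omega-q)}\le e^{-cv}$ directly for the difference), rather than from ``the cubic saddle-point asymptotics absorbing the $e^{\epsilon v}$ prefactor''; your phrasing conflates the $r$-decay of the limiting integrand with the $v$-decay of the error, which are separate mechanisms.
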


In order to prove Proposition \ref{prop2}, we perform the saddle point analysis on $f(z)$ defined in (\ref{eqn19}) by $f(z) := -\mu(z-q) +\log z-\gamma^{-1}\log(z-1)$. Through some very simple calculation, we find that 
\begin{equation}\label{eqn24}
f'(p) = f''(p) = 0, \quad  f'''(p) = -2\nu^3.
\end{equation}
Hence $p$ is the saddle point of $f(z)$. Since $f'''(p)$ is negative, the steepest descent direction for $e^{Mf(z)}$ appearing in $\mathcal{H}(u)$ is with angle $\pm 2\pi/3$ to the real axis. The steepest descent direction for $e^{-Mf(\omega)}$ in $\mathcal{J}(v)$ is with angle $\pm \pi/3$ to the real axis. Let's analyze $\mathcal{H}(u)$ first.  

%
%
\subsection{Analysis of $\mathcal{H}(u)$}
To carry out an saddle point analysis, we need to define our contour $\Gamma$ for $\mathcal{H}(u)$ first. In the following we only define $\Gamma$ in the upper half of the complex plane. The part in the lower half is just of a reflection with respect to the real axis. Let $\Gamma:=\bigcup_{i=0}^5\Gamma_i\cup\overline{\bigcup_{i=0}^5\Gamma_i}$ where 
\begin{eqnarray*}
\Gamma_0 & = & \biggl\{z = p + \frac{\epsilon}{2\nu M^{1/3}}e^{i\theta} : 0\leq\theta\leq\frac{2}{3}\pi\biggl\}, \\
\Gamma_1 & = & \biggl\{z = p + e^{2\pi i/3}t: \frac{\epsilon}{2\nu M^{1/3}}\leq t\leq (\sqrt{3}-1)(p-1)\biggl\}
\end{eqnarray*}
where $\epsilon$ is the small constant appeared in the definition for $q$ in (\ref{eqn20}). Define $\Gamma_2$ to be the arc centered at the point $z=1$ and connects the endpoint of $\Gamma_1$. We define 
\begin{equation}\label{eqn25}
p^* = \frac{\gamma}{1+\gamma}.
\end{equation}

\begin{figure}[!h]
    \centering
    \includegraphics[scale=.7]{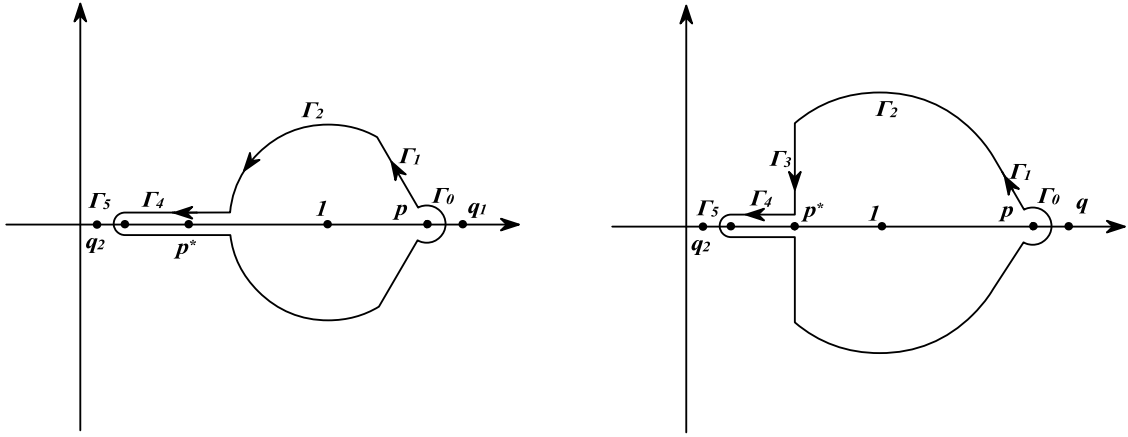}
    \caption{The Contours $\Gamma$. Case 1: Left. Case 2: Right.}
    \label{fig2}
\end{figure}
For the definition of the rest of the contours, we need to split it into two cases.
\begin{itemize}
\item
Case 1. If $\Gamma_2$ does not intersect the vertical line $\{z = p^* + it: t\in\mathbb{R}\}$, then $\Gamma_4$ is defined to be a horizontal line just above the real axis, i.e.
\begin{equation}\label{eqn26}
\Gamma_4 \subseteq \{z = t + i\kappa: t\in\mathbb{R}^+\}
\end{equation}
for some small fixed constant $\kappa > 0$. We also require that $\Gamma_4$ should connect the endpoint of $\Gamma_2$ and the point $\min\{\pi_1, p^*\} + i\kappa$. $\Gamma_5$ is a quarter circle centering at $\min\{\pi_1, p^*\} + i\kappa$, with radius $\kappa$, just connecting $\Gamma_4$. For a graph of the contour, please refer to the left part of Figure \ref{fig2}.
\item
Case 2. If $\Gamma_2$ intersects the vertical line $\{z = p^* + it: t\in\mathbb{R}\}$, then we just define $\Gamma_3$ to be the part of that line, connecting the endpoint of $\Gamma_2$ and heading downward. We add $\Gamma_3$ into our contour. The definition for $\Gamma_4$ and $\Gamma_5$ are exactly the same as that in Case 1. For a graph for this case, please refer to the right part of Figure \ref{fig2}. 
\end{itemize}

We choose the contour $\Gamma = \bigcup_{i=0}^5\Gamma_i\cup\overline{\bigcup_{i=1}^5\Gamma_i}$ in such a specific way to establish the following lemma. 
%
%
\begin{lemma}\label{lemma1}
In both cases, $\Re(f(z))$ is monotonously decreasing when $z$ travels along the path $\bigcup_{i=1}^3\Gamma_i$, provided that we choose $\kappa$ small enough. For $\Gamma_4, \Gamma_5$ we have 
\[
\sup_{z\in\Gamma_4\sqcup\Gamma_5}\Re(f(z)) < \Re(f(z^*))
\]
where $z^*$ is the intersection of $\Gamma_1$ and $\Gamma_2$.  
\end{lemma}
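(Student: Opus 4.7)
The plan is to derive a factored form of $f'$ and then analyze each of $\Gamma_1, \Gamma_2, \Gamma_3$, and $\Gamma_4 \cup \Gamma_5$ separately, using a parameterization tailored to the geometry of each piece. The starting point is the identity
\[
f'(z) \;=\; -\frac{(\gamma-1)^2\,(z-p)^2}{\gamma^{2}\,z\,(z-1)},
\]
obtained by putting $-\mu + 1/z - 1/(\gamma^2(z-1))$ over a common denominator and using $\mu\gamma^2 = (\gamma-1)^2$ together with $p = \gamma/(\gamma-1)$. The double zero at $p$ is forced by $f'(p) = f''(p) = 0$.

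On $\Gamma_1$, I would parameterize $z = p + te^{2\pi i/3}$, so that $\frac{d}{dt}\Re f(z) = \Re(f'(z)\,e^{2\pi i/3}) = -\frac{(\gamma-1)^2 t^2}{\gamma^2}\,\Re\bigl(\tfrac{1}{z(z-1)}\bigr)$. A short calculation reduces $\Re(1/(z(z-1))) > 0$ to the quadratic inequality $Q(t) := p(p-1) - (2p-1)t/2 - t^2/2 > 0$. Since $Q$ is strictly decreasing for $t > 0$, $Q(0) = p(p-1) > 0$, and $Q(t_{\text{end}}) = \Im z^* > 0$ --- a clean consequence of the choice $t_{\text{end}} = (\sqrt{3}-1)(p-1)$, which makes $\Re z^* - 1 = \Im z^*$ --- monotonicity of $\Re f$ along $\Gamma_1$ follows.

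On $\Gamma_2$, which is an arc centered at $z=1$, the term $\log|z-1|$ is constant, so $\Re f(z) = -\mu\Re z + \log|z| + \text{const}$. Writing $z = 1 + Re^{i\theta}$ yields $\frac{d}{d\theta}\Re f = R\sin\theta(\mu - 1/|z|^2)$, which is negative in the upper half-plane whenever $|z| < 1/\sqrt{\mu} = p$. A direct computation shows $|z^*| < p$, and $|z|^2 = 1 + 2R\cos\theta + R^2$ is strictly decreasing in $\theta \in (0,\pi)$, so this bound persists along $\Gamma_2$. On $\Gamma_3$ (present only in Case 2) I would compute
\[
\Im f'(p^* + it) \;=\; -t\left[\frac{1}{(p^*)^2 + t^2} - \frac{1}{\gamma^2((p^*-1)^2 + t^2)}\right],
\]
and use the identity $(p^*)^2 = \gamma^2(p^*-1)^2$ --- the defining property of $p^* = \gamma/(1+\gamma)$ --- to collapse the bracket to $\frac{1}{(p^*)^2 + t^2} - \frac{1}{(p^*)^2 + \gamma^2 t^2}$, which is strictly positive for $\gamma > 1$ and $t > 0$. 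Hence $\Im f' < 0$, which gives $(d/ds)\Re f < 0$ as $z$ moves downward along $\Gamma_3$.

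For $\Gamma_4 \cup \Gamma_5$, these pieces lie at height at most $\kappa$ above the real axis with real part in a compact subset of $(0, \min(\pi_1, p^*)]$, so by continuity $\Re f(z) = \Re f(\Re z + i 0^+) + O(\kappa)$ uniformly. It therefore suffices to prove the strict bound $\sup_{x} \Re f(x + i0^+) < \Re f(z^*)$ on the compact interval and then shrink $\kappa$. The one-variable function $h(x) := -\mu(x-q) + \log x - \gamma^{-2}\log|x-1|$ has derivative $h'(x) = -\mu + 1/x + 1/(\gamma^2(1-x))$ on $(0,1)$, which admits at most one zero, so $h$ attains its maximum on the compact interval at an endpoint; at each endpoint a direct comparison with $\Re f(z^*)$ (using the descent value accumulated along $\Gamma_1 \cup \Gamma_2$) yields the required strict inequality. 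The hard part will be this last step: unlike the preceding pieces it demands a \emph{global} comparison between disconnected parts of the contour rather than a monotonicity along a single path, and the treatment of the logarithmic singularities at $z = 0, 1$ forces the quantitative smallness of $\kappa$.
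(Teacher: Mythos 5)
Your approach is fundamentally the same as the paper's --- identical contour decomposition, piecewise sign analysis of $\frac{d}{dt}\Re f$, and the same appeal to continuity in $\kappa$ plus monotonicity on the real axis for $\Gamma_4\sqcup\Gamma_5$ --- but you organize every computation around the factored identity $f'(z)=-\frac{(\gamma-1)^2(z-p)^2}{\gamma^2\,z(z-1)}$, which the paper keeps implicit behind brute-force rational expressions. This buys genuine economy: on $\Gamma_1$ it reduces the sign question to $\Re\bigl(z(z-1)\bigr)>0$, settled cleanly by your observation $Q(t_{\mathrm{end}})=\Im z^*>0$, which in turn follows from the coincidence $\Re z^*-1=\Im z^*$; on $\Gamma_2$ the constancy of $\log|z-1|$ along the arc does the work (and your criterion $|z|<p$ is precisely what the paper checks as $R<p-1$); on $\Gamma_3$ the identity $(p^*)^2=\gamma^2(p^*-1)^2$ makes the sign immediate. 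Two small inaccuracies, neither fatal: in Case 1 the real parts along $\Gamma_4\sqcup\Gamma_5$ reach up to the endpoint of $\Gamma_2$, whose real part $1-\sqrt{R^2-\kappa^2}$ exceeds $p^*$, so your claimed range $(0,\min(\pi_1,p^*)]$ is too narrow; and your factored $f'$ actually yields the stronger fact $\frac{d}{dt}\Re f(t+i0^+)=\frac{(\gamma-1)^2(t-p)^2}{\gamma^2\,t(1-t)}>0$ strictly on all of $(0,1)$ (since $p>1$), so "at most one critical point" undersells it --- the supremum over $\Gamma_4\sqcup\Gamma_5$ sits near its rightmost point, where the decrease already proved along $\Gamma_1\cup\Gamma_2(\cup\Gamma_3)$ supplies the strict inequality with no endpoint-by-endpoint comparison, and the logarithmic singularities at $0,1$ stay bounded away from the contour. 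That is exactly how the paper closes the argument, and neither slip touches the conclusion.
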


%
%
\begin{proof}
We divide the proof into several parts. 
\begin{itemize}
\item
In $\Gamma_1, z = p + te^{2\pi i/3}$ for $(2\nu M^{1/3})^{-1}\epsilon\leq t\leq (\sqrt{3}-1)(p-1)$. Then 
\[
\frac{d}{dt}\Re(f(z)) = \frac{(\gamma-1)^4t^2[t^2+(2p-1)t-2p(p-1)]}{2\gamma^2[t^2+(1-p)t+(1-p)^2][t^2-pt+p^{2}]}.
\]
Everything is positive except the quadratic term $t^2+(2p-1)t-2p(p-1)$. When $t$ is taken to be its largest possible value $(\sqrt{3}-1)(p-1)$, the quadratic term achieves it maximum, which is $-\sqrt{3}(\sqrt{3}-1)(p-1)<0$. Hence we get $d\Re(f(z))/dt <0.$
\item
In $\Gamma_2$ we have $z = 1+Re^{-i\theta}$ for $\theta$ taking value in a subset of $(0, \pi)$. The $R$ is some constant. We have
\begin{eqnarray}
\frac{d}{d\theta}\Re(f(z))& =& \frac{R\sin\theta}{1+R^2+2R\cos\theta}\biggl[\mu(1+R^2+2R\cos\theta)-1\biggl] \nonumber\\
& \leq & \frac{R\sin\theta}{1+R^2+2R\cos\theta}\biggl[\mu(R+1)^2-1\biggl].\label{eqn27}
\end{eqnarray}
But we can calculate $R = (p-1)\cdot(3-\sqrt{3})/\sqrt{2} < p-1 = 1/\mu^2-1$. Hence $d\Re(f(z))/d\theta<0$.
\item
In $\Gamma_3$, we note that $z\in\Gamma_3$ can be represented as $z = \gamma/(1+\gamma) + it$ for some real parameter $t\in\mathbb{R}^+$. We then have 
\[
\frac{d}{dt}\Re(f(z)) = -\frac{(\gamma+1)^4(\gamma^2-1)t^2}{\gamma^2\Big[\gamma^2+(\gamma+1)^2t^2\Big]\Big[1+(\gamma+1)^2t^2\Big]}>0.
\]
Since $t$ is decreasing as $z$ travels along $\Gamma_3$, we conclude that $\Re(f(z))$ is decreasing. 
\item
For $z\in\Gamma_4\sqcup\Gamma_5$, $z$ can be represented as $z =t+\kappa i$ for some small constant $\kappa>0$ and real parameter $t$. 
We note that for $\kappa = 0,$
\[
\frac{d}{dt}\Re(f(z))  =   \frac{\mu(t-p^*)^2}{\gamma^2t(1-t)} >0, \qquad\forall t\in(0, 1). 
\]
Moreover  $\Re(f(z))$ is continuous for $\kappa$ around zero, uniformly for all  $t$ if $t$ lies in a compact subinterval of $(0, 1)$, which is indeed the case if $z\in\Gamma_4\sqcup\Gamma_5$. Hence we can choose $\kappa$ small enough so that 
\[
\sup_{z\in\Gamma_4\sqcup\Gamma_5}\Re(f(z)) < \Re(f(z^*)).
\]
\end{itemize}. 
\end{proof}
%
%

Now we are ready to find an asymptotic expression for the integral 
\begin{equation}\label{eqn28}
Z_M\mathcal{H}(u) = \frac{\nu M^{1/3}}{2\pi}\int_\Gamma e^{\nu M^{1/3}u(z-q)}\cdot e^{M[f(z)-f(p)]}\cdot\frac{g(p)}{g(z)}\cdot\frac{1}{[\nu M^{1/3}(z-p)]^k}dz
\end{equation}
As in \cite{1}, we first fix some $\delta>0$ small, then we decompose $\Gamma = \Gamma' \sqcup\Gamma''$ where 
\begin{equation}\label{eqn33}
\Gamma' := \{z\in\Gamma: |z-p|<\delta\}, \qquad \Gamma'' := \Gamma \backslash \Gamma'.
\end{equation}
We can also define $\Gamma_{\infty} = \Gamma'_{\infty}\sqcup\Gamma''_{\infty}$ where $\Gamma'_{\infty} := \nu M^{1/3}(\Gamma'-p)$ is the image of $\Gamma'$ under the map, and $\Gamma_{\infty}'' := \Gamma_\infty\backslash\Gamma'_{\infty}$. We can also set
\begin{equation}\label{eqn34}
\mathcal{H}(u) = \mathcal{H}'(u) + \mathcal{H}''(u), \qquad \mathcal{H}_{\infty}(u) = \mathcal{H}_{\infty}'(u) + \mathcal{H}_{\infty}''(u).  
\end{equation}
Here $\mathcal{H}'(u)$ is the part of the integral (\ref{eqn28}) on $\Gamma'\cup\overline{\Gamma'}$ and the definitions for $\mathcal{H}''(u), \mathcal{H}'_{\infty}(u)$ and $\mathcal{H}''_{\infty}(u)$ in (\ref{eqn34}) are similar. 

Our strategy to prove (\ref{eqn31}) in Proposition \ref{prop1} is as follows. 
\begin{itemize}
\item
By saddle point analysis, the integral of $z$ in $\mathcal{H}(u), \mathcal{H}_\infty(u)$ should be concentrated on the point $z = p$. Hence $\mathcal{H}''(u)$ and $\mathcal{H}''_\infty(u)$ should be negligible, as is proved in Lemma \ref{lemma2}.
\item
For the integral of $z$ near $z=p$, we prove in Lemma \ref{lemma3} that the difference $|\mathcal{H}'(u)-\mathcal{H}_\infty'(u)|$ is small.
\end{itemize}

Now we discuss how we should choose $\delta$. First, by some very basic calculus we obtain that, if $\delta < (p-1)/2$, then for $|z-p|<\delta$ we have
\begin{equation}\label{36}
\frac{1}{4!}|f^{(4)}(z)| \leq \frac{1}{4} + \frac{4}{\gamma^2(p-1)^4} := C_0. 
\end{equation}
We then choose $\delta >0$ such that 
\begin{equation}\label{eqn37}
\delta < \min\biggl\{\frac{\nu^3}{6C_0}, \frac{p-1}{2}\biggl\}. 
\end{equation}
For such choice of $\delta$, we  have, for $|z-p|<\delta$
\begin{eqnarray}
\biggl|f(z)-f(p)-\frac{f'''(p)}{3!}(z-p)^3\biggl| &\leq & \max_{|s-p|\leq\delta}\frac{|f^{(4)}(s)|}{4!}|z-p|^4  \nonumber \\
& \leq & C_0|z-p|^4 \leq \frac{\nu^3}{6}|z-p|^3. \label{eqn38}
\end{eqnarray}
Therefore, for $z\in\Gamma_1\cap\Gamma'$, $z = p + te^{2i\pi/3}$ for $0<t<\delta$. Hence (recall $f'''(p) = -2\nu^3$ in (\ref{eqn24}))
\begin{equation}\label{eqn39}
\Re({f(p+te^{2i\pi/3}}) - f(p))\leq -\frac{\nu^3}{6}t^3.
\end{equation}
Also, because of Lemma $\ref{lemma1}$, we know that $\Re(f(z)-f(p))$ for $z\in\Gamma''$ is even smaller. Hence we have 
\begin{equation}\label{eqn41}
\Re(f(z) - f(p))\leq -\nu^3\delta^3/6, \qquad \forall z\in \Gamma''.
\end{equation}
because of this, intuitively $\mathcal{H}''(u)$ and $\mathcal{H}''_{\infty}$ defined in (\ref{eqn34}) are negligible, as the following lemma states. 
%
%
\begin{lemma}\label{lemma2}
If $u$ is bounded below, i.e., $u>U$ for some fixed $U$, then there exists some constant $C, c>0$ and $M_0>0$ such that for $M>M_0$ we have
\begin{equation}\label{eqn35}
|\mathcal{H}''(u)| \leq Ce^{-cu}e^{-cM}, \qquad |\mathcal{H}''_{\infty}(u)| \leq Ce^{-cu}e^{-cM}
\end{equation}
uniformly for $u>U.$
\end{lemma}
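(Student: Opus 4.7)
The plan is to exploit the saddle-point decay of the integrands of $Z_M\mathcal{H}(u)$ and $\mathcal{H}_\infty(u)$ away from the critical point, combining the large-deviation factor $e^{-cM}$ coming from (\ref{eqn41}) with a separate mechanism that produces the $e^{-cu}$ factor. Throughout I interpret $\mathcal{H}''(u)$ as the $Z_M$-normalized expression of (\ref{eqn28}), since that is the quantity that actually enters Proposition~\ref{prop2}. The two estimates are proved separately but along the same template.

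For $\mathcal{H}''(u)$, I restrict the integral in (\ref{eqn28}) to $\Gamma''$ and bound the modulus of the integrand factor by factor. The factor $e^{M[f(z)-f(p)]}$ is bounded by $e^{-\nu^3\delta^3 M/6}$ via (\ref{eqn41}); $[\nu M^{1/3}(z-p)]^{-k}$ is bounded by $(\nu M^{1/3}\delta)^{-k}$ since $|z-p|\geq\delta$ on $\Gamma''$; and the ratio $g(p)/g(z)$ is uniformly bounded because the zeros of $g$, namely the $\pi_\ell$ whose $\pi_\ell^{-1}$ lie in a compact subset of $(1-\gamma^{-1},\infty)$, stay at positive distance from $\Gamma$. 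For the $u$-dependent factor $|e^{\nu M^{1/3}u(z-q)}|=e^{\nu M^{1/3}u\,\Re(z-q)}$ I would verify piece by piece that $\Re(z)\leq p-c_0$ on $\Gamma''$ for some $c_0>0$ independent of $M$: on $\Gamma_1\cap\Gamma''$ one has $\Re(z)=p-t/2$ with $t\geq\delta$; on $\Gamma_2$ the maximum real part is attained at the endpoint adjacent to $\Gamma_1$ and equals $p-(\sqrt{3}-1)(p-1)/2<p$; $\Gamma_3$ lies on $\Re(z)=p^\ast<p$; and $\Gamma_4\cup\Gamma_5$ lies strictly to the left of $p$ by construction. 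Consequently $\Re(z-q)\leq -c_0/2$ on $\Gamma''$ for all $M\geq M_0$.

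Splitting on the sign of $u$: for $u\geq 0$, $|e^{\nu M^{1/3}u(z-q)}|\leq e^{-c_0\nu M^{1/3}u/2}\leq e^{-c_0\nu u/2}$ (using $M^{1/3}\geq 1$), which already supplies the desired $e^{-cu}$. For $u\in(U,0)$ I use instead the crude uniform bound $|\Re(z-q)|\leq C_1$ on $\Gamma$, which gives $|e^{\nu M^{1/3}u(z-q)}|\leq e^{C_1\nu M^{1/3}|U|}$; this sub-exponential factor is swallowed by half of the $e^{-\nu^3\delta^3 M/6}$ decay for $M\geq M_0$, and since $e^{-cu}\geq 1$ on this range the bound is trivially of the form $Ce^{-cu}e^{-c'M}$. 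The polynomial prefactor $\nu M^{1/3}/(2\pi)$ and the bounded length of $\Gamma''$ are absorbed in the same way, which gives the first inequality in (\ref{eqn35}).

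The bound on $\mathcal{H}_\infty''(u)$ is simpler. Parametrize the two rays of $\Gamma_\infty''$ as $a=te^{\pm 2\pi i/3}$ with $t\geq\nu M^{1/3}\delta$; then $\Re(a^3)=t^3$ gives $|e^{-a^3/3}|=e^{-t^3/3}$, and $\Re(a)=-t/2$ gives $|e^{ua}|\leq e^{|U|t/2}$. For $M\geq M_0$ the lower bound $t\geq\nu M^{1/3}\delta$ is large enough that $|U|t/2\leq t^3/6$, so $|e^{ua-a^3/3}/a^k|\leq e^{-t^3/6}/t^k$, whose integral over $t\in[\nu M^{1/3}\delta,\infty)$ is bounded by $Ce^{-(\nu\delta)^3 M/6}$. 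Multiplying by the prefactor $e^{-\epsilon u}$ in (\ref{eqn29}) supplies the $e^{-cu}$ factor with $c=\epsilon$. The main technical obstacle is purely bookkeeping: organizing the sign-of-$u$ case split cleanly and verifying that all sub-exponential factors in $M$ are absorbed into the final $e^{-cM}$ decay, which is achieved by choosing $c$ strictly less than $\nu^3\delta^3/6$ and taking $M_0$ sufficiently large.
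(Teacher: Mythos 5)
Your proof is correct and follows essentially the same saddle-point template as the paper: bound the $\Gamma''$-integrand factor by factor, using (\ref{eqn41}) for the $e^{-cM}$ decay, $(\nu M^{1/3}\delta)^{-k}$ for the power factor, boundedness of $g(p)/g(z)$, and a real-part estimate along the contour for the $e^{-cu}$ factor. You are somewhat more explicit than the paper in two places that deserve mention: the case split on the sign of $u$ (the paper's one-line bound $\exp[\nu M^{1/3}u\Re(z-q)]\leq C\exp[\nu M^{1/3}(u-U)\Re(z-q)]$ is only immediate for $U>0$, otherwise the ``constant'' $C$ grows like $e^{\mathcal{O}(M^{1/3})}$ and must be absorbed into $e^{-cM}$, which you do explicitly), and the bound on $\mathcal{H}_\infty''(u)$, which the paper simply cites from \cite{1}. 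One cosmetic slip: on $\Gamma_2$ the maximum of $\Re(z)$ is attained at the endpoint nearest the real axis, where $\Re(z)=1+R$, not at the endpoint adjacent to $\Gamma_1$; since $1+R<p$ this does not affect your conclusion $\Re(z)\leq p-c_0$ on $\Gamma''$.
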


%
%
\begin{proof}
We have, by (\ref{eqn41})
\begin{eqnarray}
|Z_M\mathcal{H}''(u)|&\leq&\frac{\nu M^{1/3}}{\pi}\int_{\Gamma''}e^{\nu M^{1/3}u\Re(z-q)}\cdot e^{M\Re(f(z)-f(p))}\cdot\biggl|\frac{g(p)}{g(z)}\biggl|\cdot\frac{1}{|\nu M^{1/3}(z-p)|^k}dz \nonumber\\
& \leq & \frac{\nu M^{1/3}}{\pi(\nu M^{1/3}\delta)^k}\int_{\Gamma''}e^{\nu M^{1/3}u\Re(z-q)}\cdot e^{-\nu^3\delta^3M/6}\cdot\biggl|\frac{g(p)}{g(z)}\biggl|dz \label{eqn42}
\end{eqnarray}
We note that $|g(z)|$ is bounded above and below for $z\in\Gamma''$. Moreover since \\$\Re(z-q)\leq-\epsilon/(2\nu M^{1/3})$, we have
\begin{eqnarray*}
\exp\Big[\nu M^{1/3}u\Re(z-q)\Big] & \leq & C\exp\Big[\nu M^{1/3}(u-U)\Re(z-q)\Big] \\
& \leq & C\exp\Big[-\epsilon(u-U)/2\Big] = C\exp(-cu)
\end{eqnarray*}
for some constants $C$ and $c$.  Hence from (\ref{eqn42}) we have 
\[
|Z_M\mathcal{H}''(u)| \leq CM^{-(k-1)/3}e^{-cu}e^{-\nu^3\delta^3M/6} \leq Ce^{-cu}e^{-cM}
\]
for some constant $c, C>0$ and for $M$ large enough. The statement for $\mathcal{H}_{\infty}''(u)$ is proved in \cite{1}. 
\end{proof}
%
%

Now in order to prove (\ref{eqn31}) in Proposition \ref{prop2}, we just need to prove 
\[
|\mathcal{H}'(u) - \mathcal{H}'_{\infty}(u)|\leq Ce^{-cu}M^{-1/3} 
\]
for some constant $c, C>0$ and for $M$ large enough. First using change of variables $a\mapsto \nu M^{1/3}(z-p)$ in (\ref{eqn17}) for $\mathcal{H}'_\infty(u)$, we know
\begin{equation}\label{eqn43}
\mathcal{H}'_{\infty}  =  \frac{\nu M^{1/3}}{2\pi}\int_{\Gamma'}e^{\nu M^{1/3}u(z-q)}\cdot e^{-M\nu^3(z-p)^3/3}\cdot\frac{1}{[\nu M^{1/3}(z-p)]^k}dz.
\end{equation}
Let $\Gamma' = {\Gamma}'_0\cup{\Gamma}_1'$ where $\Gamma_0' := \Gamma'\cap\Gamma_0$ and $\Gamma_1' := \Gamma'\cap\Gamma_1$. We can define $\mathcal{H}'_{\infty, 0}, \mathcal{H}'_{\infty, 1}$ to be the integral in (\ref{eqn43}) on $\Gamma_0'\cup\overline{\Gamma_0'}$ and $\Gamma_1'\cup\overline{\Gamma_1'}$. Similarly we can define $\mathcal{H}_0'$ and $\mathcal{H}_1'$. The following lemma completes the proof of the first part of Proposition \ref{prop2}. 
%
%
\begin{lemma}\label{lemma3}
For some constant $c, C>0$ and for $M$ large enough, we have
\begin{equation}\label{eqn44}
|Z_M\mathcal{H}'_0(u) - \mathcal{H}'_{\infty, 0}(u)|\leq Ce^{-cu}M^{-1/3}, \quad |Z_M\mathcal{H}'_1(u) - \mathcal{H}'_{\infty, 1}(u)|\leq Ce^{-cu}M^{-1/3}  
\end{equation}
\end{lemma}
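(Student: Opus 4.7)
The plan is to carry out the local saddle-point analysis near $p$. I would first apply the change of variable $a = \nu M^{1/3}(z-p)$, which maps $\Gamma'$ onto $\Gamma'_\infty$, the truncated contour appearing in (\ref{eqn43}). Because $q = p + \epsilon/(\nu M^{1/3})$, we have the tidy identity $\nu M^{1/3}(z-q) = a - \epsilon$, so both $Z_M\mathcal{H}'_j(u)$ and $\mathcal{H}'_{\infty,j}(u)$ become integrals over $\Gamma'_{\infty,j}$ whose prefactors $e^{(a-\epsilon)u}/a^k$ agree exactly; subtracting gives
\begin{equation*}
Z_M\mathcal{H}'_j(u) - \mathcal{H}'_{\infty,j}(u) = \frac{e^{-\epsilon u}}{2\pi}\int_{\Gamma'_{\infty,j}}\frac{e^{ua - a^3/3}}{a^k}\left[e^{MR(z(a))}\frac{g(p)}{g(z(a))} - 1\right]da,
\end{equation*}
where $R(z) := f(z) - f(p) + \nu^3(z-p)^3/3$ is the Taylor tail of $f$ at $p$ (recall $f'(p)=f''(p)=0$ and $f'''(p) = -2\nu^3$).

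Two estimates then control the bracketed factor. First, combining (\ref{36}) with the choice of $\delta$ in (\ref{eqn37}) gives both $|R(z)| \leq C_0|z-p|^4$ and $|R(z)| \leq (\nu^3/6)|z-p|^3$ on $\Gamma'$; translated to $a$, these read $|MR(z(a))| \leq C|a|^4/M^{1/3}$ and $|MR(z(a))| \leq |a|^3/6$. The first bound, together with $|e^x-1|\leq|x|e^{|x|}$, yields $|e^{MR(z(a))}-1| \leq C|a|^4 M^{-1/3}e^{|a|^3/6}$, while the second ensures $|e^{MR(z(a))}| \leq e^{|a|^3/6}$. Second, since the remaining $\pi_\ell$'s lie in a compact subset of $(1-\gamma^{-1},\infty)$, the function $g$ in (\ref{eqn21}) is analytic and nonvanishing at $p$, so $|g(p)/g(z(a)) - 1| \leq C|a|/M^{1/3}$ throughout $\Gamma'$. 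Combining the two ingredients by the triangle inequality,
\begin{equation*}
\left|e^{MR(z(a))}\frac{g(p)}{g(z(a))} - 1\right| \leq C\,\frac{|a| + |a|^4}{M^{1/3}}\cdot e^{|a|^3/6}.
\end{equation*}

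Plugging this bound into the difference integral, I exploit the geometry of the contour. On $\Gamma'_{\infty,1}$ we have $a = \nu M^{1/3}te^{2\pi i/3}$ with $\epsilon/(2\nu M^{1/3}) \leq t \leq \delta$, so $a^3 = |a|^3$ is real positive and the saddle damping $|e^{-a^3/3}|= e^{-|a|^3/3}$ combined with $e^{|a|^3/6}$ leaves $e^{-|a|^3/6}$; the resulting $t$-integral of $(|a|+|a|^4)|a|^{-k}e^{-|a|^3/6}$ is bounded independently of $M$, producing the promised $M^{-1/3}$ rate. On $\Gamma'_{\infty,0}$ (the arc of radius $\epsilon/2$) the region of integration is bounded, and the estimate is immediate. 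The exponential decay in $u$ is extracted from $|e^{-\epsilon u + ua}| = e^{u(\Re(a)-\epsilon)}$: since $\Re(a) - \epsilon \leq -\epsilon/2$ uniformly on $\Gamma'_\infty$ and $u>U$ is bounded below, this factor is dominated by $Ce^{-cu}$ with $c = \epsilon/2$, absorbing a $U$-dependent constant into $C$.

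The main obstacle is the reduction $|MR(z(a))| \leq |a|^3/6$, which is what lets the saddle damping survive on the far end of $\Gamma'_1$ where $|a|$ can be as large as $\delta\nu M^{1/3}$. Without the restriction (\ref{eqn37}) on $\delta$, the error factor $e^{|a|^3/6}$ could overwhelm $e^{-|a|^3/3}$ there, and the tail of the $a$-integral would fail to be uniformly bounded in $M$. Once this domination is secured, the complementary bound $|MR(z(a))| \leq C|a|^4/M^{1/3}$ supplies the quantitative $M^{-1/3}$ rate. The two statements in the lemma for $\mathcal{H}'_0$ and $\mathcal{H}'_1$ are proved identically; only the parametrization of the respective arcs changes.
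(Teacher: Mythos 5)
Your proof is correct and follows essentially the same saddle-point route as the paper's: the same Taylor-remainder control on $f$ coming from the choice of $\delta$ in (\ref{eqn37}), the same Lipschitz bound on $g(p)/g(z)$, and the same split into the arc $\Gamma_0'$ and the ray piece $\Gamma_1'$, only presented more transparently by passing to the rescaled variable $a = \nu M^{1/3}(z-p)$ up front and factoring out the Gaussian weight $e^{-a^3/3}$. One small imprecision, shared with the paper's own write-up: on the far portion of $\Gamma'_{\infty,1}$ the quantity $\Re(a)-\epsilon$ is not merely $\leq -\epsilon/2$ but of magnitude $O(M^{1/3})$, so for $u\in(U,0)$ the factor $e^{u(\Re(a)-\epsilon)}$ is \emph{not} dominated by $Ce^{-cu}$ on its own; the $e^{-cu}$ decay is rigorously obtained only after pairing this factor with the retained damping $e^{-|a|^3/6}$, for which $|u|\,|a|/2 - |a|^3/6$ is bounded above by a constant depending only on $U$.
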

%
%
\begin{proof}
Assume $M$ to be large enough so that $\Gamma_0' = \Gamma_0$. For $z\in\Gamma_0$, $\nu M^{1/3}|z-p| = \epsilon/2.$ By (\ref{eqn38}) we have that $M(f(z)-f(p))$ is bounded. Also
\begin{eqnarray}
|e^{M(f(z)-f(p))} - e^{-M\nu^3(z-p)^3/3}| &\leq & CM|f(z)-f(p)-\nu^3(z-p)^3/3| \nonumber\\
& \leq & CM|z-p|^4 = CM^{-1/3}. \label{eqn46}
\end{eqnarray}
Also because $g(z)$ has no singularities or zeros around $z = p$, 
\[
\biggl|\frac{g(p)}{g(z)}-1\biggl| \leq C|z-p| = CM^{-1/3}.
\]
Thus 
\begin{eqnarray*}
&&\biggl|e^{M(f(z)-f(p))}\frac{g(p)}{g(z)} - e^{-M\nu^3(z-p)^3/3}\biggl| \\
&\leq & \biggl|e^{M(f(z)-f(p))} - e^{-M\nu^3(z-p)^3/3}\biggl| + \biggl|e^{M(f(z)-f(p))}\biggl|\cdot\biggl|\frac{g(p)}{g(z)}-1\biggl| \leq  CM^{-1/3}.
\end{eqnarray*}
Since we have
\begin{multline}
|Z_M\mathcal{H}'_0(u)-\mathcal{H}'_{\infty, 0}(u)|\leq \int_{\Gamma'_0} \frac{\nu M^{1/3}}{\pi}e^{\nu M^{1/3}u\Re(z-q)}\cdot \biggl|e^{M[f(z)-f(p)]}\frac{g(p)}{g(z)} - e^{-M\nu^3(z-p)^3/3}\biggl|\\
\cdot\frac{1}{|\nu M^{1/3}(z-p)|^k}dz, \label{eqn45}
\end{multline}
the integrand in (\ref{eqn45}) is bounded above by $Ce^{-cu}$. But the integral region is $\mathcal{O}(M^{-1/3})$. Hence the first part is (\ref{eqn44}) is true.

For the second part,  we have $z = p + te^{2\pi i/3}$ for $\epsilon/(2\nu M^{1/3})\leq t\leq\delta$. By (\ref{eqn38}), $\Re(f(z)-f(p))\leq-\nu^3t^3/6$. Like (\ref{eqn46}) we have
\[
|e^{M(f(z)-f(p))} - e^{-M\nu^3(z-p)^3/3}|\leq Ce^{-\nu^3t^3/6}M|z-p|^4 = CMe^{-cMt^3}t^4.
\]
Also
\[
\biggl|\frac{g(p)}{g(z)}-1\biggl| \leq C|z-p| = Ct.
\]
From these two inequalities we obtain
\[
\biggl|e^{M(f(z)-f(p))}\frac{g(p)}{g(z)} - e^{-M\nu^3(z-p)^3/3}\biggl|\leq Ce^{-cMt^3}(Mt^4+t).
\]
Hence, similar in (\ref{eqn45})
\begin{eqnarray*}
|Z_M\mathcal{H}'(u)-\mathcal{H}'_{\infty, 1}(u)| &\leq & \int_{\epsilon/(2\nu M^{1/3})}^\infty CM^{1/3}e^{-cu}\cdot e^{-cMt^3}(Mt^4+t)
\cdot\frac{1}{(M^{1/3}t)^k}dt \\
& \leq & M^{-1/3}\int_\epsilon^\infty Ce^{-cu}\cdot e^{-ct^3}\frac{1}{t^{k-1}}dt = Ce^{-cu}M^{-1/3}
\end{eqnarray*}
where in the second step we use the change of variables $t\mapsto t/(2\nu M^{1/3})$. 
\end{proof}
%
%

%
%
\subsection{Analysis of $\mathcal{J}(v)$}
This time we use the saddle point analysis for $\mathcal{J}(v)$ to prove (\ref{eqn32}) in Proposition \ref{prop2}. Since most of the analysis will be similar to that of the previous part, we just outline the key steps. 

The contour of $\Sigma$ in $\mathcal{J}(v)$ is graphed in Figure \ref{fig3}. Here $\Sigma = \bigcup_{i=0}^2\Sigma_{i}\cup\overline{\bigcup_{i=0}^2\Sigma_{i}}$ where 
\begin{eqnarray*}
\Sigma_{0} & = & \biggl\{\omega = p + \frac{3\epsilon}{\nu M^{1/3}} e^{i\pi t}: 0\leq t\leq \pi/3\biggl\}, \\
\Sigma_{1} & = & \biggl\{\omega = p+te^{i\pi/3}: \frac{3\epsilon}{2\nu M^{1/3}}\leq t\leq 1\biggl\}, \\
\Sigma_{2} & = & \biggl\{\omega = p + \frac{1}{2} + it: t\geq \frac{\sqrt{3}}{2}\biggl\}.
\end{eqnarray*}

\begin{figure}[!h]
    \centering
    \includegraphics[scale=.7]{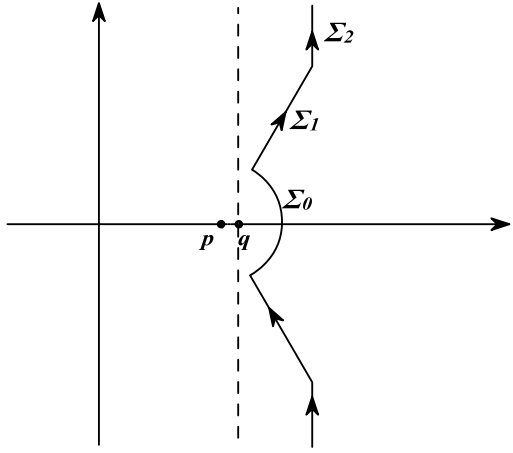}
    \caption{The Contour $\Sigma$ in $\mathcal{J}(v)$.}
    \label{fig3}
\end{figure}

As in the previous section, we choose $\Sigma$ in such a way that the following lemma holds. 

%
%
\begin{lemma}\label{lemma4}
$\Re(f(\omega))$ is monotonously increasing as $\omega$ travels along $\Sigma_{1}\sqcup\Sigma_{2}$. Moreover, for $z = p+1/2+it \in\Sigma_{2}$ (here $t\geq\sqrt{3}/2$), we have 
\begin{equation}\label{eqn49}
\Re(f(\omega) - f(\omega^*)) \geq C\log\biggl(\frac{t}{\sqrt{3}/2}\biggl)
\end{equation}
for some constant $C>0$, where $\omega^*$ is the interception of $\Sigma_{1}$ and $\Sigma_{2}$. 
\end{lemma}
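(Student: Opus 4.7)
The approach parallels the proof of Lemma \ref{lemma1}: parametrize each piece of the contour, express $\Re(f(\omega))$ explicitly via the identity $\Re(\log w) = \frac{1}{2}\log|w|^2$, differentiate in the parameter, and verify the sign of the derivative by direct algebra.

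First, on $\Sigma_1$ I would set $\omega = p + te^{i\pi/3}$ for $t\in[0,1]$, so that $|\omega|^2 = p^2+pt+t^2$, $|\omega-1|^2 = (p-1)^2+(p-1)t+t^2$, and $\Re(\omega) = p + t/2$. Substituting $p = \gamma/(\gamma-1)$ and $\mu = (\gamma-1)^2/\gamma^2$ and clearing denominators, the derivative should take the form
\[
\frac{d}{dt}\Re(f(\omega)) = \frac{t^2\bigl[\,2\gamma + (\gamma^2-1)t - (\gamma-1)^2 t^2\,\bigr]}{2\gamma^2\bigl(p^2+pt+t^2\bigr)\bigl((p-1)^2+(p-1)t+t^2\bigr)}.
\]
The prefactor $t^2$ is forced by $f'(p) = f''(p) = 0$. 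Sign checking then reduces to the quadratic factor, which equals $2\gamma$ at $t=0$ and $4\gamma-2$ at $t=1$. Since the quadratic is a downward parabola positive at both endpoints of $[0,1]$, it stays positive throughout, and monotonicity on $\Sigma_1$ follows.

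For $\Sigma_2$ I would parametrize $\omega = p + 1/2 + it$ with $t\geq\sqrt{3}/2$. The crucial observation is that $\Re(\omega) \equiv p+1/2$ is constant, so the linear term $-\mu(\omega-q)$ contributes nothing to the $t$-derivative, which simplifies to
\[
\frac{d}{dt}\Re(f(\omega)) = \frac{t}{(p+1/2)^2+t^2} - \frac{t/\gamma^2}{(p-1/2)^2+t^2}.
\]
Positivity is equivalent to $(\gamma^2-1)t^2 > (p+1/2)^2 - \gamma^2(p-1/2)^2$; substituting $p = \gamma/(\gamma-1)$ shows the right-hand side equals $\bigl[(3\gamma-1)^2 - \gamma^2(\gamma+1)^2\bigr]/\bigl[4(\gamma-1)^2\bigr]$, and a short polynomial factorization confirms this expression is non-positive for $\gamma>1$, so the inequality is automatic for $t>0$.

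For the logarithmic lower bound (\ref{eqn49}), I would exploit the formula above to compute $t\cdot(d/dt)\Re(f(\omega)) \to 1 - \gamma^{-2} > 0$ as $t\to\infty$. Combined with the strict positivity and continuity established on $[\sqrt{3}/2,\infty)$, compactness yields a constant $C>0$ with $(d/dt)\Re(f(\omega)) \geq C/t$ uniformly. Integrating from $\sqrt{3}/2$ to $t$ and using $\Re(f(\omega))|_{t=\sqrt{3}/2} = \Re(f(\omega^*))$ gives the claimed bound $\Re(f(\omega) - f(\omega^*)) \geq C\log(2t/\sqrt{3})$.

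The main obstacle is the $\Sigma_1$ piece: deriving the factored closed form for the derivative is the only genuinely computational step, and without the $t^2$-factorization it would be hard to see positivity on a range containing $t=0$. Once the closed form is in hand, every remaining claim follows from an endpoint sign check for a quadratic, the constancy of $\Re(\omega)$ on $\Sigma_2$, or a compactness argument combined with the behavior of $th'(t)$ at infinity.
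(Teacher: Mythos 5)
Your proof is correct and follows essentially the same route as the paper: parametrize $\Sigma_1$ and $\Sigma_2$, compute the $t$-derivative of $\Re(f(\omega))$ in closed form (your expressions are the paper's multiplied through by $(\gamma-1)^2$, since $p^2 = \gamma^2/(\gamma-1)^2$), and verify the sign; for the logarithmic bound both arguments integrate a $C/t$ lower bound, with you supplying the compactness-plus-limit-at-infinity justification that the paper leaves implicit. The only difference is that you carry out the sign checks (endpoint evaluation of a concave quadratic on $\Sigma_1$, the factorization $(3\gamma-1)^2 - \gamma^2(\gamma+1)^2 = -(\gamma-1)^2(\gamma^2+4\gamma-1)$ on $\Sigma_2$) in more detail than the paper, which simply asserts them.
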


%
%
\begin{proof}
\begin{itemize}
\item
For $\omega\in\Sigma_{1}$, $\omega = p+te^{i\pi/3}$ for some $3\epsilon(2\nu M^{1/3})^{-1}\leq t\leq 1$. Then
\[
\frac{d}{dt}\Re(f(\omega)) = -\frac{t^2\Big(t^2-(2p-1)t-2p^2+2p\Big)}{2p^2\Big(t^2+pt+p^2\Big)\Big(t^2+(p-1)t+(p-1)^2\Big)}.
\] 
It is straight forward to verify that the right hand side is positive for $t\in[0, 1]$. 
\item
For $\omega\in\Sigma_{2}$, $\omega = p+1/2+it$. Then
\begin{eqnarray*}
\frac{d}{dt}\Re(f(\omega)) = \frac{t\Big((2p-1)t^2+p(p-1)+(2p-1)/4\Big)}{p^2\Big(t^2+(p+1/2)^2\Big)\Big(t^2+(p-1/2)^2\Big)}>\frac{C}{t}>0.
\end{eqnarray*}
for some constant $C>0$, uniformly for all $t\geq{\sqrt{3}}/{2}$. 
Hence we have 
\[
\Re(f(w)-f(w^*))\geq C\log\biggl(\frac{t}{\sqrt{3}/2}\biggl). 
\]
\end{itemize}
\end{proof}
%
%

As in the last subsection, we can also define $\delta$ to satisfy (\ref{eqn37}). Similarly we can also define 
\[
\Sigma' = \{\omega\in\Sigma: |\omega-p|<\delta\}, \qquad\Sigma'' = \Sigma\backslash\Sigma',
\]
and $\Sigma_{\infty} = \Sigma_{\infty}'\sqcup\Sigma_{\infty}''$ where $\Sigma_{\infty}' := \nu M^{1/3}(\Sigma'-p)$ is the image of $\Sigma'$ under the map $\omega\mapsto \nu M^{1/3}(\omega-p)$, and $\Sigma_{\infty}'' = \Sigma_{\infty}\backslash\Sigma'_{\infty}.$ 

This time, for $\omega\in\Sigma_{1}\cap\Sigma'$ we have 
\begin{equation}\label{eqn46.1}
\Re(f(p + te^{i\pi/3}) - f(p)) \geq \frac{\nu^3}{6}t^3,
\end{equation}
and, because of Lemma \ref{lemma4} holds, we have for $\omega\in\Sigma''$,
\begin{equation}\label{eqn47}
\Re(f(\omega))-f(p)\geq \frac{\nu^3}{6}\delta^3.
\end{equation}
Specifically, for $\omega\in\Sigma_{2}$, by Lemma \ref{lemma4},
\begin{equation}\label{eqn50}
\Re(f(\omega)-f(p)) =\Re(f(\omega)-f(w^*)) + \Re(f(w^*)-f(p)) \geq \frac{\nu^3}{6}\delta^3 + C\log\biggl(\frac{t}{\sqrt{3}/2}\biggl)
\end{equation}
for some constant $C>0$. 

Similarly, from the decomposition of $\Sigma = \Sigma'\sqcup\Sigma''$ and $\Sigma_{\infty} = \Sigma_{\infty}'\sqcup\Sigma_{\infty}''$ we can also decompose $\mathcal{J}(v)$ and $\mathcal{J}_{\infty}(v)$ into
\[
\mathcal{J}(v) = \mathcal{J}'(v)+ \mathcal{J}''(v), \qquad \mathcal{J}_\infty(v) = \mathcal{J}'_\infty(v)+ \mathcal{J}''_\infty(v).
\]

Because of (\ref{eqn47}) and (\ref{eqn50}), the following lemma holds true. Since the proof is similar to that of Lemma \ref{lemma2}, we just provide the outlines. The only different part is that now our $\Sigma$ is an infinite contour.
%
%
\begin{lemma}\label{lemma5}
If $v$ is bounded below, i.e., $v>V$ for some fixed $V$, then there exists some constant $C, c>0$ and $M_0>0$ such that for $M>M_0$ we have 
\begin{equation}\label{eqn48}
|Z_M^{-1}\mathcal{J}''(v)|\leq Ce^{-cv}e^{-cM}, \qquad |\mathcal{J}_{\infty}''(v)|\leq Ce^{-cv}e^{-cM}
\end{equation}
uniformly for $v>V$. 
\end{lemma}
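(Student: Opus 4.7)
The plan is to mimic the proof of Lemma \ref{lemma2} in the setting of $\mathcal{J}(v)$, and then deal separately with the new difficulty caused by $\Sigma$ being unbounded. Starting from the definition of $Z_M$ in (\ref{eqn23}) and of $\mathcal{J}(v)$ in (\ref{eqn22}), I would first rewrite
\[
Z_M^{-1}\mathcal{J}''(v) = \frac{\nu M^{1/3}}{2\pi}\int_{\Sigma''}e^{-\nu M^{1/3}v(\omega-q)}\cdot e^{-M[f(\omega)-f(p)]}\cdot\frac{g(\omega)}{g(p)}\cdot\bigl[\nu M^{1/3}(\omega-p)\bigl]^k\,d\omega,
\]
so that the analysis is now entirely in terms of the differences $\omega-q$, $f(\omega)-f(p)$, and the polynomial factor $g(\omega)(\omega-p)^k$. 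The decomposition $\Sigma'' = (\Sigma_0\cup\Sigma_1)\cap\Sigma''\;\sqcup\;\Sigma_2$ reduces the lemma to two types of estimate: a bounded piece (analogous to Lemma \ref{lemma2}) and the new tail along $\Sigma_2$.

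On the bounded piece $(\Sigma_0\cup\Sigma_1)\cap\Sigma''$, the argument is essentially identical to the one used in the proof of Lemma \ref{lemma2}. The bound (\ref{eqn47}) yields $e^{-M\Re[f(\omega)-f(p)]}\le e^{-\nu^3\delta^3 M/6}$, which produces the required $e^{-cM}$ factor. The factor $|g(\omega)/g(p)|$ is uniformly bounded there since $g$ has neither zeros nor poles on this compact piece, and the polynomial $|\nu M^{1/3}(\omega-p)|^k$ is bounded by a polynomial in $M$, absorbed into the exponentially small factor. For the $v$-dependence one checks directly from the parametrizations of $\Sigma_0$ and $\Sigma_1$ (together with $q=p+\epsilon/(\nu M^{1/3})$) that $\Re(\omega-q)\ge c/M^{1/3}$ for some $c>0$; hence $e^{-\nu M^{1/3}v\Re(\omega-q)}\le e^{-cv}$ uniformly in $v>V$, after absorbing a harmless $e^{|c V|}$ factor into the constant $C$.

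The main obstacle is the tail along the infinite vertical contour $\Sigma_2$, where $\omega = p+\tfrac12+it$ and $t$ ranges over $[\sqrt3/2,\infty)$. Two things must be controlled simultaneously: the integral has to converge and the convergence has to be strong enough to give $e^{-cM}$ after accounting for the polynomial growth of $g(\omega)(\omega-p)^k$, which grows like $|t|^{r_1+r_2-k+k}=|t|^{r_1+r_2}$ as $|t|\to\infty$. Here I would use the refined bound (\ref{eqn50}): for $\omega\in\Sigma_2$,
\[
\Re\bigl(f(\omega)-f(p)\bigl)\;\ge\;\frac{\nu^3\delta^3}{6}+C\log\!\left(\frac{t}{\sqrt3/2}\right),
\]
which yields $e^{-M\Re[f(\omega)-f(p)]}\le e^{-\nu^3\delta^3 M/6}\cdot(t/(\sqrt3/2))^{-CM}$. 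Choosing $M$ so large that $CM-(r_1+r_2)-k\ge 2$ makes the integrand over $\Sigma_2$ decay like $t^{-2}$ after all polynomial factors are absorbed, so the $t$-integral converges. Moreover, on $\Sigma_2$ one has $\Re(\omega-q)=\tfrac12-\epsilon/(\nu M^{1/3})\ge\tfrac14$ for $M$ large, so $e^{-\nu M^{1/3}v\Re(\omega-q)}\le e^{-\nu M^{1/3}v/4}$, producing both the $e^{-cv}$ decay and an extra $e^{-cM}$ whenever $v>V$. Putting these estimates together gives $|Z_M^{-1}\mathcal{J}''(v)|\le Ce^{-cv}e^{-cM}$, as desired.

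For $\mathcal{J}_\infty''(v)$, the analysis is considerably simpler. After the change of variables $b=\nu M^{1/3}(\omega-p)$, the relevant contour $\Sigma_\infty''$ lies at distance at least $\nu M^{1/3}\delta$ from the origin. On the rays $\arg b=\pm\pi/3$, one has $\Re(b^3/3)=|b|^3/6$, giving $|e^{b^3/3}|\le e^{-|b|^3/6}$; the inner arc of $\Sigma_\infty''$ is bounded and also contributes $e^{-cM}$ since $|b|\ge\nu M^{1/3}\delta$ there. The factor $|b|^k$ and the $v$-dependent term $e^{\epsilon u}\cdot e^{-v\Re b}$ are absorbed by the cubic decay, yielding $|\mathcal{J}_\infty''(v)|\le Ce^{-cv}e^{-cM}$ by the same line of reasoning. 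This completes the sketch.
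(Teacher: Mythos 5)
Your proposal follows essentially the same route as the paper's own proof: you split $\Sigma''$ into the bounded piece $\Sigma''\cap\Sigma_1$ (handled exactly as in Lemma \ref{lemma2}) and the unbounded tail $\Sigma_2''$, using the logarithmic lower bound (\ref{eqn50}) to tame the latter; the paper simply defers the $\mathcal{J}_\infty''$ estimate to \cite{1}, whereas you sketch it directly, which is a perfectly reasonable addition. A few harmless slips worth fixing: $g(\omega)$ is a rational function with numerator and denominator of the same degree $r_1+r_2-k$, so it stays bounded as $|t|\to\infty$ and $g(\omega)(\omega-p)^k = \mathcal{O}(|t|^k)$, not $\mathcal{O}(|t|^{r_1+r_2})$ (your overcount is only conservative, so the argument survives); on the rays $\arg b=\pm\pi/3$ one has $\Re(b^3/3)=-|b|^3/3$ rather than $|b|^3/6$ (your displayed bound $|e^{b^3/3}|\le e^{-|b|^3/6}$ is still valid, just not tight); and the assertion that $e^{-\nu M^{1/3}v/4}$ ``produces an extra $e^{-cM}$'' holds only for $v\ge0$ --- for $V<v<0$ this factor grows like $e^{cM^{1/3}}$ and must itself be absorbed by the genuine $e^{-cM}$ coming from $\Re[f(\omega)-f(p)]\ge\nu^3\delta^3/6$, which you should say explicitly.
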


%
%
\begin{proof}
The result for $\mathcal{J}_{\infty}''(v)$ is proved in \cite{1}. We just provide a brief proof about $Z_M^{-1}\mathcal{J}''(v)$. The only difference from the previous subsection is that now $\Sigma''$ is not of finite length.  First since $\Re(\omega-q)\leq\epsilon/(2\nu M^{1/3})$, we obtain, similar to Lemma \ref{lemma2}, that  
\[
\exp\Big[-\nu M^{1/3}u\Re(\omega-q)\Big] \leq C\exp(-cv). 
\]
for some $C, c$, uniformly for $v>V$. We decompose $\mathcal{J}''(v)$ into $\mathcal{J}''(v)  = \mathcal{J}''_1(v) + \mathcal{J}''_2(v)$ where $\mathcal{J}''_1(v)$, $\mathcal{J}_2''(v)$ are the integral of (\ref{eqn22}) on $\Sigma''\cap\Sigma_{1}$ and $\Sigma''\cap\Sigma_{2}$. Since $\Sigma''\cap\Sigma_{1}$ is of finite length, we can take the same procedure as in the previous subsection to  prove that 
\[
|Z^{-1}_M\mathcal{J}_1''(v)|\leq Ce^{-cv}e^{-cM}. 
\]
Now we consider $\mathcal{J}''_2(v)$. For $\omega\in\Sigma_{2}''$, by using (\ref{eqn50}) we obtain 
\begin{eqnarray*}
|Z_M^{-1}\mathcal{J}''(v)|&\leq&\frac{\nu M^{1/3}}{\pi}\int_{\Sigma_{2}''}e^{-\nu M^{1/3}u\Re(\omega-q)} e^{-M\Re(f(\omega)-f(z))}\biggl|\frac{g(\omega)}{g(p)}\biggl||\nu M^{1/3}(\omega-p)|^kd\omega \\
& \leq & CM^{(r+1)/3}\int_{\sqrt{3}/2}^\infty e^{-cu}e^{-cM-cM\log (2t/\sqrt{3})}t^rdt  \\
& \leq & Ce^{-cM}e^{-cu}. 
\end{eqnarray*}
\end{proof}
%
%

After showing that $\mathcal{J}''(v)$ is negligible, we then turn to analyze $\mathcal{J}'(v)$, we can decompose $\Sigma' = \Sigma_{0}'\sqcup\Sigma_{1}'$ where $\Sigma_{0}' = \Sigma'\cap\Sigma_{0}, \Sigma_{1}' = \Sigma'\cap\Sigma_{1}$. Also we can use the same rule to write $\Sigma_{\infty}' = \Sigma_{\infty, 0}'\sqcup\Sigma_{\infty, 1}'$.
From this we can similarly decompose 
\[
\mathcal{J}'(v) = \mathcal{J}'_{0}(v)+\mathcal{J}'_{1}(v), \qquad  \mathcal{J}'_{\infty}(v) = \mathcal{J}'_{\infty, 0}(v)+\mathcal{J}'_{\infty, 1}(v)
\]

As a analog of Lemma \ref{lemma3}, we have the following lemma. The proof is similar and thus we just omit that.
 
%
%
\begin{lemma}\label{lemma6}
For some constant $c, C>0$ and for $M$ large enough, we have
\begin{equation}\label{eqn51}
|Z_M^{-1}\mathcal{J}'_{0}(v)-\mathcal{J}'_{\infty, 0}(v)|<Ce^{-cv}M^{-1/3}, \quad |Z_M^{-1}\mathcal{J}'_{1}(v)-\mathcal{J}'_{\infty, 1}(v)|<Ce^{-cv}M^{-1/3}
\end{equation}
uniformly for $v$ bounded below. 
\end{lemma}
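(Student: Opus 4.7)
The plan is to follow the template of Lemma \ref{lemma3} almost verbatim, with the key bookkeeping difference that $\mathcal{J}(v)$ carries the factor $g(\omega)(\omega-p)^k$ in the \emph{numerator} rather than $1/[g(z)(z-p)^k]$ in the denominator. After multiplication by $Z_M^{-1}=(\nu M^{1/3})^ke^{Mf(p)}/g(p)$, both $\mathcal{J}'(v)$ and $\mathcal{J}'_{\infty}(v)$ carry the factor $(\nu M^{1/3}(\omega-p))^k g(\omega)/g(p)$, which is uniformly bounded on $\Sigma'_0$ and grows only polynomially on $\Sigma'_1$. As before, the target is a pointwise bound on the integrand difference of order $M^{-1/3}$, together with an exponential factor $e^{-cv}$ extracted from $e^{-\nu M^{1/3}v\Re(\omega-q)}$.

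For the arc $\Sigma'_0$, where $|\omega-p|=O(M^{-1/3})$, the strategy is to Taylor-expand both the phase and $g$ about $p$. Using (\ref{eqn38}), which controls the fourth-order Taylor remainder of $f$, we obtain
\[
\bigl|{-M(f(\omega)-f(p))} - M\nu^3(\omega-p)^3/3\bigl| \leq CM|\omega-p|^4 = O(M^{-1/3}),
\]
and since $M(f(\omega)-f(p))$ is bounded on $\Sigma'_0$ this implies $|e^{-M(f(\omega)-f(p))}-e^{M\nu^3(\omega-p)^3/3}|=O(M^{-1/3})$. Analyticity and non-vanishing of $g$ near $p$ give $|g(\omega)/g(p)-1|\leq C|\omega-p|=O(M^{-1/3})$. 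The extra factor $|\nu M^{1/3}(\omega-p)|^k$ equals a constant on $\Sigma_0$, and the exponential prefactor is controlled by $Ce^{-cv}$ using $\Re(\omega-q)\geq c/M^{1/3}$ and the lower bound on $v$. Since the arc has length $O(M^{-1/3})$, the leading $\nu M^{1/3}$ cancels against the measure and one recovers the claimed bound $Ce^{-cv}M^{-1/3}$.

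For $\Sigma'_1$, parametrize $\omega=p+te^{i\pi/3}$ with $t\in[3\epsilon/(2\nu M^{1/3}),\delta]$. The analogue of (\ref{eqn39}) in this direction is the lower bound (\ref{eqn46.1}), $\Re(f(\omega)-f(p))\geq\nu^3 t^3/6$, which provides the Gaussian-type cubic decay $e^{-cMt^3}$. Combining the Taylor estimate
\[
|e^{-M(f(\omega)-f(p))} - e^{M\nu^3(\omega-p)^3/3}| \leq Ce^{-cMt^3}Mt^4
\]
with $|g(\omega)/g(p)-1|\leq Ct$ yields a pointwise integrand bound of the form $Ce^{-cv}e^{-cMt^3}(Mt^4+t)(M^{1/3}t)^k$. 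After the substitution $s=\nu M^{1/3}t$ the integral becomes
\[
CM^{-1/3}e^{-cv}\int_{3\epsilon/2}^{\infty} e^{-cs^3}\bigl(s^{k+4}+s^{k+1}\bigl)\,ds = O\bigl(M^{-1/3}e^{-cv}\bigl),
\]
where convergence of all polynomial moments against $e^{-cs^3}$ is automatic.

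The only conceptual point requiring any care is the presence of $(\omega-p)^k$ in the numerator instead of the denominator, which on $\Sigma'_1$ introduces a factor $(M^{1/3}t)^k$ growing in $t$. I do not regard this as a serious obstacle: the cubic exponential decay $e^{-cMt^3}$ dominates every polynomial in $s=M^{1/3}t$, so the integral over $s$ remains a finite constant independent of $M$ and the overall $M^{-1/3}$ rate is preserved. The rest of the bookkeeping — splitting $\Sigma'=\Sigma'_0\sqcup\Sigma'_1$, extracting $e^{-cv}$ from the exponential prefactor, and matching $\mathcal{J}'_{\infty}$ to the rescaled cubic integral via $b=\nu M^{1/3}(\omega-p)$ — is an exact mirror of the proof of Lemma \ref{lemma3}.
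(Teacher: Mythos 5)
Your proposal is correct and is essentially the argument the paper intends: the paper explicitly omits this proof with the remark that it is ``similar'' to Lemma \ref{lemma3}, and your adaptation — replacing the denominator factor $1/[g(z)(z-p)^k]$ by the numerator factor $g(\omega)(\omega-p)^k$, using (\ref{eqn46.1}) in place of (\ref{eqn39}), and observing that the cubic decay $e^{-cMt^3}$ absorbs the extra polynomial growth $(M^{1/3}t)^k$ after the substitution $s=\nu M^{1/3}t$ — is exactly the right bookkeeping. The split into $\Sigma'_0$ (arc, bounded integrand on an $O(M^{-1/3})$-length set) and $\Sigma'_1$ (ray, cubic-exponential decay) matches the paper's decomposition, so there is nothing to flag.
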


As a final remark, Lemma \ref{lemma5} and Lemma \ref{lemma6} together implies the second part of Proposition \ref{prop2}.

%
%
\subsection{Finishing the proof}
By (\ref{eqn6}) in Corollary \ref{corollary1} and by the scaling in (\ref{eqn11}) we obtain
\begin{equation}\label{eqn52}
\mathbb{P}\biggl\{\frac{M^{2/3}(\gamma-1)^{4/3}}{\gamma}\biggl[\Big(1-\frac{1}{\gamma}\Big)^2-\lambda_{\min}\biggl]\leq x\biggl\} \to \det(I-\mathcal{K}_{\infty})
\end{equation}
where $\mathcal{K}_{\infty}$ is an integral operator on $L^2(0, \infty)$ with kernel
\[
\mathcal{K}_{\infty}(u, v) = -\int_0^\infty\mathcal{H}_{\infty}(y+x+u)\mathcal{J}_{\infty}(y+x+v)dy. 
\]
for $\mathcal{H}_\infty(u), \mathcal{J}_\infty(v)$ defined in (\ref{eqn29}) and (\ref{eqn30}). By the same argument  in section 3.3 of \cite{1}, we can prove that the Fredholm determinant in (\ref{eqn52}) is just $F_k(x)$. This finishes the proof.

\section{Proof of Part 2 of Theorem \ref{thm1}}\label{sec5}
In this part, we analyze the limiting distribution of the smallest sample eigenvalue when there are some ``true'' eigenvalues being smaller than the critical point $1-\gamma^{-1}$. 

We assume that for some $1\leq k\leq r_2$, 
\[
\pi_N^{-1} = \pi^{-1}_{N-1} = \ldots = \pi^{-1}_{N-k+1} < 1-\gamma^{-1}. 
\]
We also assume that the rest of the $\pi^{-1}$'s are in a compact subset of $(\pi_N^{-1}, \infty)$. In this case, our constants are 
\begin{equation}\label{eqn53}
\alpha = \frac{1}{2}, \quad \mu = \frac{1}{\pi_N}-\frac{\gamma^{-2}}{\pi_N-1}, \quad \nu = \sqrt{\frac{1}{\pi_N^2}-\frac{1}{\gamma^2(\pi_N-1)^2}}, \quad q = \pi_N+\frac{\epsilon}{\nu M^{1/2}}.
\end{equation}
Here $\epsilon>0$ is a pre-fixed smaller number. We note here that due to some very simple algebra we can verify that $\nu$ is a real number. Again, we define 
\begin{equation}\label{eqn54}
g(z) = \frac{1}{(z-1)^{r_1+r_2-k}}\prod_{\ell\leq r_2\text{ or }N-r_1+1\leq\ell\leq N-k}(z-\pi_\ell). 
\end{equation}
The function $\mathcal{H}(u), \mathcal{J}(v)$ is now
\begin{eqnarray}
\mathcal{H}(u) & = & \frac{\nu M^{1/2}}{2\pi}\int_\Gamma dz e^{\nu M^{1/2}u(z-q)}\cdot e^{Mf(z)}\cdot\frac{1}{g(z)}\cdot\frac{1}{(z-\pi_N)^k}dz\label{eqn55}\\
\mathcal{J}(u) & = & \frac{\nu M^{1/2}}{2\pi}\int_{\Sigma} d\omega e^{-\nu M^{1/2}u(\omega-q)}\cdot e^{-Mf(\omega)}\cdot g(\omega){(\omega-\pi_N)^k}d\omega\label{eqn56}
\end{eqnarray}
The constant $Z_M$ is defined to be
\begin{equation}\label{eqn57}
Z_M = \frac{g(\pi_N)}{e^{Mf(\pi_N)}(\nu M^{1/2})^k}.
\end{equation}
Finally, like the previous section, we define our $\mathcal{H}_\infty(u)$ and $\mathcal{J}_\infty(v)$ to be 
\begin{eqnarray}
\mathcal{H}_\infty(u) & = & e^{-\epsilon u}\mathrm{Res}_{a=0}\biggl\{e^{ua-a^2/2}\cdot\frac{1}{a^k}\biggl\}, \label{eqn58}\\
\mathcal{J}_\infty(v) & = & \frac{1}{2\pi}e^{\epsilon v}\int_{\Sigma_\infty}s^ke^{s^2/2-vs}ds \label{eqn59}. 
\end{eqnarray}
where 
\[
\Sigma_\infty = \{z = 2\epsilon+it : t\in\mathbb{R}\}
\]
oriented from bottom to top. We note that the integral on $\Sigma_\infty$ in (\ref{eqn59}) also equals to the integral on the imaginary axis, from bottom to top. 
Again, in this section, our main goal is to prove the following proposition.

%
%
\begin{proposition}\label{prop3}
Assume $u>U, v>V$ are bounded below, then
\begin{enumerate}
\item
There exists some constant $C, c>0$, $M_0>0$ such that uniformly for $u>U$
\begin{equation}\label{eqn60}
|Z_M\mathcal{H}(u)-\mathcal{H}_\infty(u)| \leq Ce^{-cu}M^{-1/2}, \qquad \forall M>M_0.
\end{equation}
\item
There exists some constant $C, c>0$, $M_0>0$ such that uniformly for $v>V$
\begin{equation}\label{eqn61}
|Z_M^{-1}\mathcal{J}(v)-\mathcal{J}_\infty(v)| \leq Ce^{-cv}M^{-1/2}, \qquad \forall M>M_0.
\end{equation}
\end{enumerate}
\end{proposition}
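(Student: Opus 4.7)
The strategy mirrors that of Proposition \ref{prop2} but is adapted to a non-degenerate saddle. With the parameters in (\ref{eqn53}) a direct computation gives $f'(\pi_N)=-\mu+\pi_N^{-1}-[\gamma^2(\pi_N-1)]^{-1}=0$ and $f''(\pi_N)=-\pi_N^{-2}+[\gamma^2(\pi_N-1)^2]^{-1}=-\nu^2<0$, so $\pi_N$ is a simple saddle of $f$. On the natural window $a=\nu M^{1/2}(z-\pi_N)=O(1)$, Taylor's theorem yields $M[f(z)-f(\pi_N)]=-a^2/2+O(M^{-1/2}|a|^3)$; this explains the choice $\alpha=1/2$ and predicts Gaussian (rather than Airy) limits.

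For $\mathcal{H}(u)$ I would deform $\Gamma$ so that inside a fixed disc $|z-\pi_N|<\delta$ it is the circle of radius $\epsilon/(\nu M^{1/2})$ centered at $\pi_N$, while outside the disc it is any closed curve still enclosing the remaining $\pi_\ell$'s (and the pole at $z=1$) on which $\Re(f(z))\leq\Re(f(\pi_N))-c$ for some $c>0$. Since the other $\pi_\ell$'s lie in a compact subset of $(0,\pi_N)$ by hypothesis and $\pi_N$ is the unique local maximum of $\Re(f)$ on the real line in $(1,\infty)$, such a curve exists and is the analogue of the descent contour of Lemma \ref{lemma1}. For $\mathcal{J}(v)$ I would deform $\Sigma$ to the vertical line $\{\pi_N+\epsilon/(\nu M^{1/2})+it:t\in\mathbb{R}\}$, along which a computation like that of Lemma \ref{lemma4} shows $\Re(f(\omega)-f(\pi_N))>0$ with growth at least logarithmic as $|t|\to\infty$.

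With these contours one splits $\Gamma=\Gamma'\sqcup\Gamma''$ and $\Sigma=\Sigma'\sqcup\Sigma''$ at $|\cdot-\pi_N|=\delta$, and correspondingly writes $\mathcal{H}=\mathcal{H}'+\mathcal{H}''$ and $\mathcal{J}=\mathcal{J}'+\mathcal{J}''$. Exactly as in Lemmas \ref{lemma2} and \ref{lemma5}, one obtains $|Z_M\mathcal{H}''(u)|\leq Ce^{-cu}e^{-cM}$ and $|Z_M^{-1}\mathcal{J}''(v)|\leq Ce^{-cv}e^{-cM}$, the $e^{-cu},e^{-cv}$ factors arising from $\Re(z-q)\leq-\epsilon/(2\nu M^{1/2})$ on $\Gamma$ and $\Re(\omega-q)\geq\epsilon/(2\nu M^{1/2})$ on $\Sigma$ together with the lower bounds $u>U$, $v>V$. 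On the local pieces I would change variables $a=\nu M^{1/2}(z-\pi_N)$ and $s=\nu M^{1/2}(\omega-\pi_N)$ and combine the estimates
\[
\left|e^{M[f(z)-f(\pi_N)]}-e^{-a^2/2}\right|\leq CM^{-1/2}|a|^3 e^{-c|a|^2},\qquad \left|\frac{g(\pi_N)}{g(z)}-1\right|\leq CM^{-1/2}|a|,
\]
valid because $g$ is holomorphic and nonzero in a neighborhood of $\pi_N$, with the bookkeeping of Lemmas \ref{lemma3} and \ref{lemma6}. By the residue theorem the resulting leading term for $Z_M\mathcal{H}'(u)$ is $e^{-\epsilon u}(2\pi)^{-1}\oint_{|a|=\epsilon}e^{ua-a^2/2}a^{-k}\,da=\mathcal{H}_\infty(u)$, and the leading term for $Z_M^{-1}\mathcal{J}'(v)$ is $\mathcal{J}_\infty(v)$ after extending the finite vertical segment to $\Sigma_\infty$ with Gaussian-tail control. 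The $O(M^{-1/2})$ remainder is inherited from the Taylor expansions above.

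The main obstacle, relative to Section \ref{sec4}, is the contour analysis for $\mathcal{J}$: because $\Sigma$ is unbounded one needs a genuinely global, quantitative lower bound on $\Re(f(\omega)-f(\pi_N))$ along the entire vertical line, strong enough to produce absolute convergence with exponential decay in $M$ over an infinite contour. A secondary technicality is that $\pi_N$ is itself a pole (of order $k$) of the integrand of $\mathcal{H}$, so the "local circle" has shrinking radius $\epsilon/(\nu M^{1/2})\to 0$ rather than fixed radius; one must verify that the circle can be kept well separated from the other $\pi_\ell$'s, which is guaranteed by the compactness hypothesis on the non-extremal population eigenvalues.
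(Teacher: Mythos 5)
Your proposal is correct and follows essentially the same route as the paper: the local contribution to $\mathcal{H}$ is captured by the residue at $\pi_N$ (your shrinking circle is precisely the paper's $\mathcal{H}_1$ in (\ref{eqn65})), the remaining contour around the other $\pi_\ell$'s and $z=1$ is handled by a global descent bound as in Lemma \ref{lemma8}, and the $\mathcal{J}$-analysis on the shifted vertical line matches Lemmas \ref{lemma10}--\ref{lemma12}. One caveat on the opening framing: unlike Proposition \ref{prop2}, where the contour passes \emph{through} the saddle along a steepest-descent ray and $\mathcal{H}_\infty$ is an Airy-type contour integral, here the saddle coincides with the order-$k$ pole $\pi_N$, the entire nontrivial limit is a pure residue (a degree-$(k-1)$ polynomial times $e^{-\epsilon u}$), and the remaining contour avoids $\pi_N$ altogether and is exponentially negligible --- a structurally different and simpler mechanism, which your body text does in fact describe correctly even though the first sentence calls it a ``mirror'' of Proposition \ref{prop2}.
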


In order to prove this proposition, we also need the saddle point analysis on $f(z)$. Note that with the choice of $\mu$ in (\ref{eqn53}), the two saddle points are $z = \pi_N$ and $z = 1/(\mu\pi_N)$. It is not hard to know
\begin{equation}\label{eqn62}
1<\frac{1}{\mu\pi_N}<\frac{\gamma}{\gamma-1}<\pi_N. 
\end{equation}
and
\begin{equation}\label{eqn63}
f'(\pi_N) = 0, \qquad f''(\pi_N) = -\nu^2. 
\end{equation}
In subsection \ref{sec3.2.1} we use the residue theorem to provide an asymptotic analysis for $\mathcal{H}(u)$ in (\ref{eqn60}), and in subsection \ref{sec3.2.2}, a saddle point analysis around $\pi_N$ will provide us a good approximation for $\mathcal{J}(v)$ in (\ref{eqn61}). Finally, in subsection \ref{sec3.2.3} we finish the proof.

%
%
\subsection{Analysis of $\mathcal{H}(u)$}\label{sec3.2.1}
By the residue theorem we have
\begin{equation}\label{eqn64}
Z_M\mathcal{H}(u)  =  \mathcal{H}_1(u) + \frac{\nu M^{1/2}}{2\pi}\int_{\Gamma'} e^{\nu M^{1/2}u(z-q)}\cdot e^{M(f(z) - f(\pi_N))}\cdot\frac{g(\pi_N)}{g(z)}\cdot\frac{1}{[\nu M^{1/2}(z-\pi_N)]^k}dz
\end{equation}
where 
\begin{equation}\label{eqn65}
\mathcal{H}_1(u) := ie^{-\epsilon u}\mathrm{Res}_{a=0}\biggl\{\frac{e^{au}}{a^k}\cdot e^{M[f(\pi_N+a/(\nu\sqrt{M}))-f(\pi_N)]}\cdot\frac{g(\pi_N)}{g(\pi_N+a/(\nu\sqrt{M}))}\biggl\}
\end{equation}
and $\Gamma'$ is a contour inclosing $\pi_1, \ldots, \pi_{N-k}$ but excluding $\pi_{N-k+1} = \ldots = \pi_N$. We will later choose $\Gamma'$ explicitly, based on which we will prove that the integral on $\Gamma'$ is negligible. Hence the main contribution of (\ref{eqn64}) is the residue $\mathcal{H}_1(u)$ in (\ref{eqn65}). 

The following lemma gives an approximation of $\mathcal{H}_1(u)$.
%
%
\begin{lemma}\label{lemma7}
For $u>U$ bounded below, uniformly there exists constants $c, C>0$ and $M_0>0$ such that for all  $ M>M_0$
\[
|\mathcal{H}_1(u) -\mathcal{H}_\infty(u)|\leq Ce^{-cu}M^{-1/2}. 
\]
\end{lemma}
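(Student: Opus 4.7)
The plan is to exploit the fact that $\mathcal{H}_1(u)$ and $\mathcal{H}_\infty(u)$ are already written as residues at $a=0$, so I can represent each as a contour integral over a common small circle $\{|a|=r\}$ and compare the integrands pointwise via Taylor expansion. Fix $r$ once and for all so that $0<r<\epsilon$ (e.g. $r=\epsilon/2$), small enough that the closed disc $\{|a|\leq r\}$ contains no pole of the integrand of $\mathcal{H}_1$ other than the origin. This holds uniformly for all $M\geq M_0$ because $\pi_N$ is bounded away from the singularities $\{0,1\}$ of $f$ and from the zeros $\{\pi_\ell:\ell\leq N-k\}$ of $g$, so $\pi_N+a/(\nu\sqrt{M})$ stays in a fixed annulus around $\pi_N$ uniformly in $|a|\leq r$.

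On the circle I would Taylor expand the two non-trivial factors in the integrand of $\mathcal{H}_1$. By (\ref{eqn63}) we have $f'(\pi_N)=0$ and $f''(\pi_N)=-\nu^2$, and analyticity of $f$ near $\pi_N$ gives a uniform bound on $f'''$ on the small disc, so
\begin{equation*}
M\Bigl[f\bigl(\pi_N+\tfrac{a}{\nu\sqrt{M}}\bigr)-f(\pi_N)\Bigr]=-\frac{a^2}{2}+\frac{R_1(a,M)}{\sqrt{M}},
\end{equation*}
with $|R_1|\leq C$ on $\{|a|\leq r\}$ for $M\geq M_0$. Since $g$ is holomorphic and nonzero at $\pi_N$, the analogous first-order expansion yields
\begin{equation*}
\frac{g(\pi_N)}{g\bigl(\pi_N+\tfrac{a}{\nu\sqrt{M}}\bigr)}=1+\frac{R_2(a,M)}{\sqrt{M}},
\end{equation*}
again with $|R_2|\leq C$ on the same disc. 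Combining these two and using $|e^{R_1/\sqrt{M}}-1|\leq CM^{-1/2}$ (since $R_1$ is bounded), the whole non-trivial factor becomes $e^{-a^2/2}(1+R(a,M)/\sqrt{M})$ with $|R|\leq C$ on $\{|a|=r\}$.

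Plugging this back into the contour-integral representation of the residue and subtracting the corresponding expression for $\mathcal{H}_\infty(u)$,
\begin{equation*}
|\mathcal{H}_1(u)-\mathcal{H}_\infty(u)|\leq\frac{e^{-\epsilon u}}{2\pi}\oint_{|a|=r}\biggl|\frac{e^{au-a^2/2}}{a^k}\biggr|\cdot\frac{|R(a,M)|}{\sqrt{M}}\,|da|\leq\frac{C\,e^{-\epsilon u}e^{ur}}{\sqrt{M}},
\end{equation*}
using $|e^{au}|\leq e^{ur}$ and $|a|^{-k}=r^{-k}$ on the circle. Setting $c:=\epsilon-r>0$ yields $|\mathcal{H}_1(u)-\mathcal{H}_\infty(u)|\leq Ce^{-cu}M^{-1/2}$ uniformly for $u>U$, as required. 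The only real obstacle here is the uniformity of the Taylor bounds in $M$, which reduces to checking that the fixed disc $\{|a|\leq r\}$ sits inside a common domain of analyticity of $f$ and $1/g$ near $\pi_N$ for all large $M$; this is immediate since the other $\pi_\ell$'s lie in a compact subset of $(\pi_N,\infty)$. Matching the prefactor $i$ in the definition of $\mathcal{H}_1$ with the $(2\pi i)^{-1}$ from the residue-to-contour conversion is routine bookkeeping.
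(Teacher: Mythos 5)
Your proof is correct and uses essentially the same Taylor-expansion-around-$a=0$ argument as the paper: the paper extracts a polynomial $Q_M(u)$ of degree $\leq k-1$ with $M$-uniformly bounded coefficients from the residue and absorbs it into $e^{-cu}$ via the $e^{-\epsilon u}$ prefactor, whereas you represent the residue as a Cauchy integral over a fixed circle $|a|=r<\epsilon$ and bound the integrand difference pointwise, which is a cosmetic variant of the same estimate. One minor slip worth noting: for $u<0$ (allowed, since $u$ is only bounded below by $U$) one has $|e^{au}|\leq e^{|u|r}$ rather than $e^{ur}$, but since $u>U$ this only costs a fixed constant on the compact range $U<u\leq 0$ and does not affect the final bound $Ce^{-cu}M^{-1/2}$ with $c=\epsilon-r$.
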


%
%
\begin{proof}
The proof is almost the same as that in \cite{1}. \\First by expanding $f(\pi_N-a/(\nu\sqrt{M}))-f(\pi_N)$ and on  $g(\pi_N)/g(\pi_N+a/(\nu\sqrt{M}))$ around $a=0$ we obtain 
\[
\frac{e^{ua}}{a^k}\cdot e^{M[f(\pi_N+a/(\nu\sqrt{M}))-f(\pi_N)]}\cdot\frac{g(\pi_N)}{g(\pi_N+a/(\nu\sqrt{M}))} = \frac{e^{ua}}{a^k}e^{-a^2/2}\biggl[1+\frac{1}{\sqrt{M}}\phi_M(a)\biggl]
\]
where $\phi_M(a)$ is a analytic function around $a= 0$. Moreover all the coefficient in the Taylor expansion of $\phi_M(a)$ around $a=0$ are bounded in $M$. Hence we obtain 
\begin{equation}\label{eqn66}
\mathcal{H}(u) = \mathcal{H}_\infty(u) + \frac{e^{-\epsilon u}}{\sqrt{M}}Q_M(u). 
\end{equation}
Here $Q_M(u)$ is a polynomial of $u$ of degree at most $k-1$, and all the coefficients of $Q_M(u)$ are bounded in $M$. Due to the factor $e^{-\epsilon u}$ in (\ref{eqn66}) we complete the proof. 
\end{proof}
%
%

To keep our promise our next step is to prove that the second term in (\ref{eqn64}) is negligible. This is done in Lemma \ref{lemma9}. But before that, we need to choose our contour $\Gamma'$ explicitly. Again, our contour $\Gamma'$ is symmetric with respect to the real axis thus we just  define the upper part. We take $\Gamma := \bigcup_{i=1}^6\Gamma_i\cup\overline{\bigcup_{i=1}^6\Gamma_i}$ where the $\Gamma_i$'s will be defined below. For a graph of the contour please refer to Figure \ref{fig4}. 
\begin{figure}[!h]
    \centering
    \includegraphics[scale=.7]{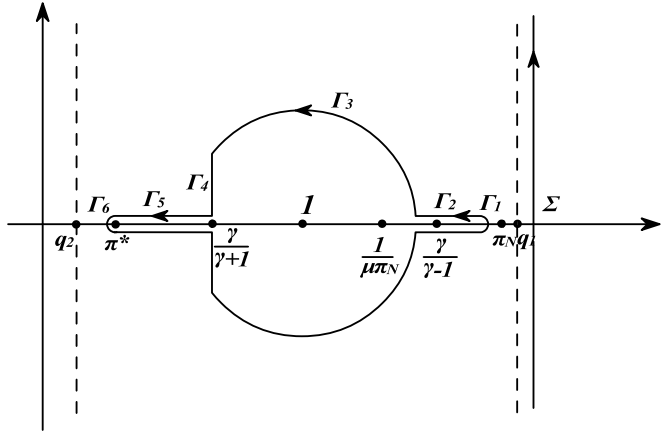}
    \caption{The Contours $\Gamma$ and $\Sigma$}
    \label{fig4}
\end{figure}
\begin{itemize}
\item
Take $\pi'$ such that $\max(\pi_{N-k}, \gamma/(\gamma-1)) < \pi' < \pi_N$. Moreover take some $\kappa>0$ to be a sufficiently small constant. We take $\Gamma_1$ to be the quarter-circle centered at $\pi'$, with radius $\kappa$. Here $\kappa>0$ is a small constant.   
\item
Take $R = 1/\sqrt{\mu}-1$. We define
\[
\Gamma_2 = \{z = t + i\kappa : 1+R<t<\pi'\}. 
\]

\item
Define $\Gamma_3$ to be the arc centered at $1$ with radius $R := 1/\sqrt{\mu}-1$. Using some simple algebra, we can verify that with our choice of $R$, the real part of the intersection of $\Gamma_2$ and $\Gamma_3$ is between  $1/(\mu\pi_N)$ and $\gamma/(\gamma-1)$, provided that $\kappa$ is small enough. 
\item
If $1-R<\gamma/(\gamma+1)$, we just define $\Gamma_4 = \{\gamma/(1+\gamma)+it : t\in\mathbb{R}\}$ to be the vertical line joining $\Gamma_3$ and $\Gamma_5$. This is shown in Figure \ref{fig4}. If $1-R\geq\gamma/(\gamma+1)$, then we just exclude $\Gamma_4$ in the definition of $\Gamma' = \bigcup_{i=1}^6\Gamma_i\cup\overline{\bigcup_{i=1}^6\Gamma_i}$, connecting $\Gamma_3$ and $\Gamma_5$ directly as we did in the previous section. 
\item
Take $\pi^* = \min(\pi_1, \gamma/(\gamma+1))$, we define
\[
\Gamma_5 = \{z = t + i\kappa : \pi^*<t<\max(1-R, \gamma/(1+\gamma))\}. 
\]
\item
We finally define $\Gamma_6$ to be the quarter-circle centered at $\pi^*$ with radius $\kappa$. 
\end{itemize}

Our analysis of $\mathcal{H}(u)$ replies heavily on the following lemma. 
%
%
\begin{lemma}\label{lemma8}
There exists some constant $c>0$ such that 
\[
\sup_{z\in\Gamma'}\Re\Big[f(z)-f(\pi_N)\Big] \leq -c. 
\]
\end{lemma}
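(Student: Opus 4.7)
The plan is to verify the bound piecewise on $\Gamma' = \bigcup_{i=1}^{6}\Gamma_i \cup \overline{\bigcup_{i=1}^{6}\Gamma_i}$, showing that on each piece $\Re f(z) \leq f(\pi_N) - c_i$ for some constant $c_i > 0$ independent of $M$; the claim then follows with $c = \min_i c_i$. The two essential inputs are the behavior of $f$ on the real line and the algebraic identity built into the choice $R = 1/\sqrt{\mu} - 1$.

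First I would establish the real-line picture. On $(1,\infty)$ the derivative $f'(t) = -\mu + 1/t - 1/(\gamma^2(t-1))$ vanishes exactly at $\pi_N$ and $1/(\mu\pi_N)$; since $f''(\pi_N) = -\nu^2 < 0$, the point $\pi_N$ is a local maximum and $1/(\mu\pi_N) < \gamma/(\gamma-1) < \pi_N$ is a local minimum. Our hypothesis $\ell_N < 1-\gamma^{-1}$ gives $\pi_N > \gamma/(\gamma-1)$, hence $\mu \pi_N^2 > 1$; a brief calculation then shows $1/(\mu\pi_N) < 1/\sqrt{\mu} = 1+R < \pi_N$. Thus $f$ is strictly increasing on $[1+R,\pi_N]$, so $f(t) < f(\pi_N)$ for every $t \in [1+R,\pi']$. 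On $(0,1)$, with the principal branch of $\log$, the derivative of $\Re f(t)$ is $-\mu + 1/t + 1/(\gamma^2(1-t))$, whose minimum over $(0,1)$ at $t = \gamma/(\gamma+1)$ equals $(1+\gamma^{-1})^2 - \mu > 0$; thus $\Re f$ is strictly increasing on $(0,1)$.

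The heart of the proof is the arc $\Gamma_3$. Parametrizing $z = 1 + Re^{i\theta}$ and repeating the computation of Lemma \ref{lemma1}, part 2,
\[
\frac{d}{d\theta}\Re f(z) = \frac{R\sin\theta}{1+R^2+2R\cos\theta}\bigl[\mu(1+R^2+2R\cos\theta) - 1\bigr].
\]
The choice $R = 1/\sqrt{\mu}-1$ yields $\mu(1+R)^2 = 1$, so the bracket equals $2\mu R(\cos\theta - 1)$, which is strictly negative for $\theta \in (0,\pi)$. Hence $\Re f$ is strictly decreasing in $\theta$ on $\Gamma_3$, and its supremum there is attained within $\mathcal{O}(\kappa)$ of the endpoint $z = 1+R$. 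Since $f(1+R) < f(\pi_N)$ strictly and $\kappa$ may be chosen as small as we wish, this yields a uniform bound on $\Gamma_3$.

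The remaining pieces are handled by continuity or by analogous calculations. The near-real horizontal segments $\Gamma_2$ and $\Gamma_5$ are controlled by the monotonicity of $\Re f$ on the real line combined with continuity in $\kappa$. The small quarter-circles $\Gamma_1$ and $\Gamma_6$ around $\pi'$ and $\pi^*$ are controlled by the continuity of $f$, using $\pi' < \pi_N$ and $\pi^* \in (0,1)$. The optional vertical segment $\Gamma_4 \subset \{\Re z = \gamma/(\gamma+1)\}$ is handled as in Lemma \ref{lemma1}, part 3: a direct derivative computation shows $\Re f$ is monotone along the segment, so its values are dominated by those at the junctions with $\Gamma_3$ and $\Gamma_5$, which have already been bounded. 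The main obstacle is bookkeeping through the two sub-cases depending on whether $1 - R < \gamma/(\gamma+1)$, but in either configuration each piece reduces to one of the three mechanisms above.
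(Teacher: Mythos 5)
Your proposal is correct and follows essentially the same strategy as the paper's proof: a piece-by-piece decomposition of $\Gamma'$, with the derivative of $\Re f$ shown to have a favorable sign along each piece (strictly positive on the near-real segments $\Gamma_2,\Gamma_5$ at $\kappa=0$ thanks to the factorization $f'(t)=\mu(\pi_N-t)(t-1/(\mu\pi_N))/(t(t-1))$, negative on the arc $\Gamma_3$ owing to the algebraic choice $\mu(1+R)^2=1$, and the Lemma~\ref{lemma1} computation on $\Gamma_4$), followed by a continuity-in-$\kappa$ argument to absorb the small circles $\Gamma_1,\Gamma_6$ and to upgrade the strict inequality $f(\pi')<f(\pi_N)$ into a uniform gap $-c$.

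The minor differences are in presentation only. You first lay out the real-line picture via the two roots $\pi_N$ and $1/(\mu\pi_N)$ of $f'$ and verify the ordering $1/(\mu\pi_N)<1/\sqrt\mu=1+R<\pi_N$ (equivalent to $\pi_N>\gamma/(\gamma-1)$), and on $(0,1)$ you go further than the paper by locating the minimum of the derivative at $t=\gamma/(\gamma+1)$ with value $(1+\gamma^{-1})^2-\mu>0$ — this is a clean way to see the positivity, though the paper's factorization already gives it directly. On $\Gamma_3$ you use the correct $|1+Re^{i\theta}|^2=1+2R\cos\theta+R^2$, so your bracket is $\mu(1+R^2+2R\cos\theta)-1=2\mu R(\cos\theta-1)\le 0$; the paper's displayed formula writes $R\cos\theta$ rather than $2R\cos\theta$ (a typo, consistent with Lemma~\ref{lemma1}), but both versions yield the same sign conclusion. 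Your bookkeeping of the optional $\Gamma_4$ and the two sub-cases is handled exactly as the paper does. No gaps.
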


%
%
\begin{proof}
Similar to Lemma \ref{lemma1}, we also divide the proof into several parts.
\begin{itemize}
\item
In $\Gamma_2$, $z$ can be represented as $z = t+i\kappa$. We note that when $\kappa = 0$, \\$z = t\in(1/{\mu\pi_N}, \pi_N)$ and
\[
\frac{d}{dt}\Re(f(z)) = \frac{\mu(\pi_N-t)(t-1/(\mu\pi_N))}{t(t-1)} > 0. 
\]
Thus $\Re(f(z))$ is strictly decreasing for $t$ traveling on the real axis $\pi_N$ to $1/(\mu\pi_N)$. Moreover $\Re(f(t+i\kappa))$ is continuous for $\kappa$ around zero, uniformly for all $t$ lying in a compact subset of $(1/(\mu\pi_N), \pi_N)$. Thus we can choose $\kappa$ to be sufficiently small such that for some constant $c>0$,
\[
\sup_{z\in\Gamma_1\cup\Gamma_2}\Re\Big[f(z)-f(\pi_N)\Big] \leq -c. 
\]
\item
In $\Gamma_3$, $z = 1+Re^{i\theta}$ for $\theta$ in a subset of $(0, \pi)$. Then
\begin{eqnarray*}
\frac{d}{d\theta}\Re(f(z)) & = & \frac{R\sin\theta}{1+2R\cos\theta+R^2}\Big[\mu(1+R^2+R\cos\theta)-1\Big] \\
& \leq & \frac{R\sin\theta}{1+2R\cos\theta+R^2}\Big[\mu(1+R)^2-1\Big] = 0
\end{eqnarray*}
with our choice of $R := 1/\sqrt{\mu}-1$. Hence $\Re(f(z))$ is decreasing in $\Gamma_3$. 
\item
In $\Gamma_4$(if it exists), $z = \gamma/(1+\gamma) + it$. With exactly the same calculation as that of Lemma \ref{lemma1}, we can prove that $\Re(f(z))$ is decreasing in $\Gamma_4$.
\item
in $\Gamma_5$, $z$ can be represented as $z = t+i\kappa$, just as that in $\Gamma_2$. This time when $\kappa= 0$, $z = t \in (\pi^*, \gamma/(1+\gamma))$ and 
\[
\frac{d}{dt}\Re(f(z)) = \frac{\mu(\pi_N-t)(1/(\mu\pi_N)-t)}{t(1-t)} > 0. 
\]
With the same argument as that in $\Gamma_2$, we can choose $\kappa$ sufficiently small such that 
\[
\sup_{z\in\Gamma_5\cup\Gamma_6}\Re\Big[f(z)-f(\pi_N)\Big] \leq -c. 
\]
\end{itemize}
\end{proof}
%
%

Based on Lemma \ref{lemma8}, our next lemma establishes that the integral along $\Gamma'$ in (\ref{eqn64}) is negligible, which finishes the proof of (\ref{eqn60}) in Proposition \ref{prop3}. 

%
%
\begin{lemma}\label{lemma9}
There exists some $C, c>0$ and $M_0>0$ such that uniformly for $u>U$ and for $M>M_0$
\[
\biggl|\frac{\nu M^{1/2}}{2\pi}\int_{\Gamma'} e^{\nu M^{1/2}u(z-q)}\cdot e^{M(f(z) - f(\pi_N))}\cdot\frac{g(\pi_N)}{g(z)}\cdot\frac{1}{[\nu M^{1/2}(z-\pi_N)]^k}dz\biggl|\leq Ce^{-cu}e^{-cM}.
\]
\end{lemma}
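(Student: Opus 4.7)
The plan is to bound the integrand on $\Gamma'$ factor by factor, invoking Lemma \ref{lemma8} for the most delicate piece and exploiting the fact that $\Gamma'$ was constructed to stay away from $\pi_N$.

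First I would use Lemma \ref{lemma8} directly: on the entire compact contour $\Gamma'$ we have $\Re(f(z)-f(\pi_N))\leq -c$, which immediately gives $|e^{M(f(z)-f(\pi_N))}|\leq e^{-cM}$. Next, by the construction at the start of Section \ref{sec5} (choosing $\pi' < \pi_N$ and $\kappa$ small), the contour $\Gamma'$ lies in the half-plane $\{\Re(z)\leq \pi'+\kappa\}$ with $\pi'+\kappa<\pi_N$ strictly, and moreover $|z-\pi_N|\geq c_2$ for some $c_2>0$ uniformly on $\Gamma'$. Combined with the fact that $g$ has no zeros on $\Gamma'$ (we removed the factors corresponding to $\pi_{N-k+1},\dots,\pi_N$ only up to sign and absorbed them into $(z-\pi_N)^k$, and the remaining $\pi_\ell$'s lie outside $\Gamma'$ by construction), this yields the $M$-independent bounds
\[
\biggl|\frac{g(\pi_N)}{g(z)}\biggl|\leq C, \qquad \frac{1}{|\nu M^{1/2}(z-\pi_N)|^k}\leq CM^{-k/2}.
\]

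Now for the $u$-dependent factor: since $q=\pi_N+\epsilon/(\nu M^{1/2})$ and $\Re(z)\leq \pi'+\kappa<\pi_N$, for $M$ large enough we have $\Re(z-q)\leq -c_1$ with $c_1:=(\pi_N-\pi'-\kappa)/2>0$. Hence
\[
|e^{\nu M^{1/2}u(z-q)}|\leq e^{-c_1\nu M^{1/2}u}.
\]
If $u\geq 0$ this already gives $e^{-c_1\nu u}e^{-c_1\nu(M^{1/2}-1)u}\leq e^{-c u}$, which is what we want. The only subtlety is $U\leq u<0$, where $e^{-c_1\nu M^{1/2}u}$ can be as large as $e^{c_1\nu M^{1/2}|U|}$. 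But $M^{1/2}|U|=o(M)$, so we can absorb it into the $e^{-cM}$ factor coming from Lemma \ref{lemma8} and still be left with exponential decay in $M$; meanwhile on the bounded set $U\leq u\leq 0$ the factor $e^{-cu}$ is uniformly bounded above and below. Writing this out, for $M$ sufficiently large,
\[
e^{-c_1\nu M^{1/2}u}\,e^{-cM}\leq Ce^{-c'u}e^{-c'M}
\]
uniformly in $u>U$, with new constants $c',C>0$.

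Putting the four bounds together, using that $\Gamma'$ has finite length, the integrand is bounded by $CM^{(1-k)/2}e^{-c'u}e^{-c'M}$, and after absorbing the polynomial prefactor into the exponential $e^{-c'M/2}$ we obtain the claimed $Ce^{-cu}e^{-cM}$. The main (but only mild) obstacle in this argument is the bookkeeping for $u$ in the range $[U,0)$: one must be careful that the apparent growth $e^{c_1\nu M^{1/2}|u|}$ is genuinely dominated by the decay $e^{-cM}$ from Lemma \ref{lemma8}, which is why the exponential rate $M$ (rather than $M^{1/2}$) in Lemma \ref{lemma8} is essential here.
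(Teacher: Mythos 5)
Your argument is correct and follows essentially the same route as the paper's proof: Lemma \ref{lemma8} supplies the decisive $e^{-cM}$ factor, the construction of $\Gamma'$ keeps it at a fixed ($M$-independent) distance to the left of $\pi_N$ so that the remaining factors are controlled, and the polynomial prefactor together with the possible $e^{O(M^{1/2})}$ growth from negative $u$ are absorbed into $e^{-cM}$. One small slip in phrasing: the $\pi_\ell$ with $\ell\leq N-k$ lie \emph{inside} $\Gamma'$ by construction (the contour is built to enclose them), not outside; the fact you actually need, and do use correctly, is that the curve $\Gamma'$ itself stays bounded away from every $\pi_\ell$, which keeps $g(\pi_N)/g(z)$ bounded along $\Gamma'$.
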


%
%
\begin{proof}
For $z\in\Gamma'$, by Lemma \ref{lemma8}, we have 
\[
e^{M\Re(f(z)-f(\pi_N))} \leq e^{-cM}. 
\]
Moreover, since $\nu M^{1/2}\Re(z-q)\leq-\epsilon$ for $z\in\Gamma'$, by the same proof as that in Lemma \ref{lemma2} we have $e^{\nu M^{1/2}u\Re(z-q)}\leq Ce^{-cu}$ for some constant $c, C>0$, uniformly for $u>U$ and for $M$ sufficiently large. One last notes that 
\[
\frac{g(\pi_N)}{g(z)}\cdot \frac{1}{[\nu M^{1/2}(z-\pi_N)]} \leq C
\]
remains bounded for $z\in\Gamma'$. This finishes the proof of Lemma \ref{lemma9}. 
\end{proof}
%
%

%
%
\subsection{Analysis of $\mathcal{J}(v)$}\label{sec3.2.2}
The contour for $\mathcal{J}(v)$ is fairly simple compared to $\Gamma.$ As is also shown in Figure \ref{fig4}, the contour $\Sigma$ is defined by 
\[
\Sigma = \biggl\{\omega = \pi_N + \frac{2\epsilon}{\nu M^{1/2}} + it : t\in\mathbb{R}^+\biggl\}. 
\]

Now if we have a constant $\delta > 0$ such that  
$0<\delta < ({\pi_N-1})/{2}$ 
then for $\omega$ such that $|\omega-\pi_N|<\delta$ we have
\begin{equation}\label{eqn69}
\frac{1}{3!}|f'''(\omega)|\leq \frac{1}{3} + \frac{8}{3\gamma^2(\pi_N-1)^3} := C_0.
\end{equation}
We then fix $\delta$ to satisfy
\begin{equation}\label{eqn68}
0<\delta <\min\biggl\{{\frac{\pi_N-1}{2}}, \frac{\nu^2}{4C_0}\biggl\}
\end{equation}
Then for $|\omega-\pi_N|<\delta$, 
\begin{eqnarray}
\biggl|f(\omega) - f(\pi_N)-\frac{f''(\pi_N)}{2}(\omega-\pi_N)^2\biggl| &\leq& \max_{|s-\pi_N|\leq\delta}\frac{|f'''(s)|}{3!}|\omega-\pi_N|^3 \nonumber\\
& \leq & C_0|\omega-\pi_N|^3 \leq \frac{\nu^2}{4}|\omega-\pi_N|^2.\label{eqn70}
\end{eqnarray}

Now we decompose $\Sigma = \Sigma' \sqcup\Sigma''$ where 
\[
\Sigma' = \{\omega\in\Sigma: |\omega-\pi_N|<\delta\}
\]
and $\Sigma'' = \Sigma\backslash\Sigma'$. We further define $\Sigma'_\infty := \nu M^{1/2}(\Sigma'-\pi_N)$ to be the image of $\Sigma'$ under the mapping. Moreover define $\Sigma''_\infty := \Sigma_\infty\backslash\Sigma_\infty'$. Based on the integration contour, we can also set
\begin{equation}\label{eqn71}
\mathcal{J}(v) = \mathcal{J}'(v) + \mathcal{J}''(v), \qquad \mathcal{J}_\infty(v) = \mathcal{J}'_\infty(v) + \mathcal{J}''_\infty(v)
\end{equation}
where the decomposition is similar to that of the previous subsection. 

Define $\omega^*$ as the intersection of $\Sigma'$ and $\Sigma''$. 
From (\ref{eqn70}) and from the parametrization $\omega^* = \pi_N + 2\epsilon M^{-1/2}/\nu + it^*$ for some $t^*$ bounded below we obtain
\begin{eqnarray}
\Re\Big[f(\omega^*) - f(\pi_N)\Big] &\geq & -\frac{\nu^2}{2}\Re(\omega^*-\pi_N)^2 -\frac{\nu^2}{4}|\omega^*-\pi_N|^2 \nonumber \\
& = & -\frac{3\epsilon^2}{M} + \frac{1}{4}\nu^2t^{*2}\geq \frac{\nu^2}{32}\delta^2\label{imp}
\end{eqnarray}
if $M$ is large enough such that $t^{*}\geq \delta/2$ and $3\epsilon^2/M\leq \nu^2\delta^2/32$.

To analyze the behavior of $\mathcal{J}(v)$, we have the following lemma. 
%
%
\begin{lemma}\label{lemma10}
$\Re(f(\omega))$ is increasing as $\omega$ travels along $\Sigma$. 
Moreover, for $\omega\in\Sigma''$ and for sufficiently large $M$, we have 
\begin{equation}\label{eqn72}
\Re(f(\omega) - f(\pi_N))\geq C_1 + C\log(t/\delta)
\end{equation}
for some constant $C_1, C>0$ and $t$ is defined by $\omega = \pi_N + {2\epsilon}/{(\nu M^{1/2})} + it$. 
\end{lemma}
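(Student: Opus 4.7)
The plan is to carry out an explicit calculation of $\frac{d}{dt}\Re f(\omega)$ along the vertical line $\Sigma$, show that this derivative is strictly positive, and then extract quantitative decay so as to obtain the logarithmic lower bound on $\Sigma''$.

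First I would parametrize $\omega = a + it$ with $a := \pi_N + 2\epsilon/(\nu M^{1/2})$ constant and $t > 0$. Using $f(z) = -\mu(z-q) + \log z - \gamma^{-2}\log(z-1)$ together with $\Re\log(x+iy) = \tfrac{1}{2}\log(x^2+y^2)$, a direct differentiation gives
\begin{equation*}
\frac{d}{dt}\Re f(\omega) \;=\; \frac{t}{a^2+t^2} - \frac{t}{\gamma^2\bigl((a-1)^2+t^2\bigr)} \;=\; \frac{t\bigl[(\gamma^2-1)t^2 + \gamma^2(a-1)^2 - a^2\bigr]}{\gamma^2(a^2+t^2)\bigl((a-1)^2+t^2\bigr)}.
\end{equation*}
By (\ref{eqn62}) we have $\pi_N > \gamma/(\gamma-1)$, hence $\gamma(\pi_N-1) > \pi_N$, i.e.\ $\gamma^2(\pi_N-1)^2 > \pi_N^2$. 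Since $a\to\pi_N$ as $M\to\infty$, the term $\gamma^2(a-1)^2 - a^2$ is bounded below by a positive constant for $M$ large. Combined with $(\gamma^2-1) t^2 \ge 0$, the bracket in the numerator is positive for every $t>0$, so $\frac{d}{dt}\Re f(\omega) > 0$; this proves the monotonicity assertion.

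Next I would extract the asymptotics of this derivative for large $t$. As $t\to\infty$, the numerator grows like $(\gamma^2-1)t^3$ and the denominator like $\gamma^2 t^4$, so there exist $T>0$ and $c_0>0$ (depending only on $\gamma$ and $\pi_N$) with
\begin{equation*}
\frac{d}{dt}\Re f(\omega) \;\ge\; \frac{c_0}{t} \qquad\text{for all } t\ge T,
\end{equation*}
uniformly in $M$ once $M$ is large enough that $a\le \pi_N+1$. Now let $\omega^* = \pi_N + 2\epsilon/(\nu M^{1/2}) + it^*$ be the intersection of $\Sigma'$ and $\Sigma''$, so that (\ref{imp}) gives $\Re(f(\omega^*)-f(\pi_N)) \ge \nu^2\delta^2/32$. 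For $\omega\in\Sigma''$ with $t\ge T$, integrating the derivative bound from $T$ to $t$ yields $\Re(f(\omega)-f(\omega(T)))\ge c_0\log(t/T)$, where $\omega(T):=a+iT$. Using monotonicity on $[t^*,T]$ we obtain $\Re(f(\omega(T))-f(\omega^*))\ge 0$, so
\begin{equation*}
\Re(f(\omega)-f(\pi_N))\;\ge\;\frac{\nu^2\delta^2}{32}+c_0\log(t/T)\;\ge\;\Bigl(\frac{\nu^2\delta^2}{32}-c_0\log(T/\delta)\Bigr)+c_0\log(t/\delta);
\end{equation*}
for $t^*\le t\le T$ the left-hand side is already $\ge\nu^2\delta^2/32$ by (\ref{imp}) and monotonicity, while $c_0\log(t/\delta)$ is bounded above by $c_0\log(T/\delta)$. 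Setting $C:=c_0$ and choosing $C_1$ small enough to absorb the additive constant, the claimed bound $\Re(f(\omega)-f(\pi_N))\ge C_1+C\log(t/\delta)$ holds in both regimes.

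The main obstacle I anticipate is making sure that the positive lower bound on the bracket $\gamma^2(a-1)^2-a^2$ survives uniformly in $M$ (so that the constants $C_1, C$ are genuinely independent of $M$), and that the constants $T,c_0$ chosen for the large-$t$ tail can be reconciled with the value of $\delta$ from (\ref{eqn68}); these are really just bookkeeping, resolved by taking $\delta$ and the $M$-independent bound on $\pi_N-1$ as fixed and then letting $M_0$ be chosen afterwards so that $a$ is sufficiently close to $\pi_N$.
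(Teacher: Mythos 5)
Your proposal is correct and follows essentially the same route as the paper: compute $\frac{d}{dt}\Re f$ along the vertical line $\Sigma$, show the bracket $(\gamma^2-1)t^2+\gamma^2(A-1)^2-A^2$ is positive (using $\pi_N>\gamma/(\gamma-1)$), extract a lower bound of the form $c_0/t$, and integrate against (\ref{imp}) to get the logarithmic estimate. One caution on the last step: the lemma asserts $C_1>0$, whereas your phrase ``choosing $C_1$ small enough to absorb the additive constant'' could end up with $C_1\le 0$ if $T$ is large; since the derivative estimate $\frac{d}{dt}\Re f(\omega)\ge c_0/t$ in fact holds for all $t\ge\delta$ (the numerator is at least $(\gamma^2-1)t^3$ while the denominator is $\mathcal{O}(t^4)$ once $t\ge\delta$ and $A$ is bounded), you should simply take $T=\delta$ as the paper does, giving $C_1=\nu^2\delta^2/32>0$.
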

%
%
\begin{proof}
The proof is straightforward as we observe for $\omega = \pi_N + {2\epsilon}/{(\nu M^{1/2})} + it$,
\begin{equation}\label{eqn67}
\frac{d}{dt}\Re(f(\omega)) = \frac{t\Big[(\gamma^2-1)t^2 + \gamma^2(A-1)^2-A^2\Big]}{\gamma^2(A^2+t^2)((A-1)^2+t^2)}
\end{equation}
where $A := \pi_N + 2\epsilon/(\nu M^{1/2})$. We note that $\gamma^2(A-1)^2-A^2 > 0$ is equivalent to that $A\geq \gamma/(\gamma-1)$, which is indeed true. Hence (\ref{eqn67}) is positive. 

For the second statement, if $t\geq \delta$, then from (\ref{eqn67}) 
\[
\frac{d}{dt}\Re(f(\omega)) \geq \frac{C}{t}
\]
for some constant $C$. Integrating this from $t^*$ to $t$, tonether with (\ref{imp}) gives the desired result. 
\end{proof}
%
%

For $f(\omega)$,  $\omega = \pi_N$ is the saddle point. So we expect that all the integral on $\Sigma''$ and $\Sigma''_\infty$ is negligible, as the following lemma states.
%
%
\begin{lemma}\label{lemma11}
If $v$ is bounded below, i.e., $v>V$ for some fixed $V$, then there exists some constant $C, c>0$ and $M_0>0$ such that for $M>M_0, v>V$,
\[
|Z_M^{-1}\mathcal{J}''(v)| \leq Ce^{-cv}e^{-cM}, \qquad |\mathcal{J}_\infty''(v)| \leq Ce^{-cv}e^{-cM}.
\]
\end{lemma}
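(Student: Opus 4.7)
The plan is to bound the magnitudes of $Z_M^{-1}\mathcal{J}''(v)$ and $\mathcal{J}_\infty''(v)$ directly on the relevant tail portions of the contours. Along the vertical contour $\Sigma$, the real part of $\omega-q$ is constant, so the $v$-dependent factor separates cleanly from the rest. Specifically, parametrizing $\omega\in\Sigma$ as $\omega = \pi_N + 2\epsilon/(\nu M^{1/2}) + it$ gives $\Re(\omega-q) = \epsilon/(\nu M^{1/2})$ identically, and hence
\[
\bigl|e^{-\nu M^{1/2}v(\omega-q)}\bigr| = e^{-\epsilon v}\leq C e^{-cv}
\]
uniformly for $v>V$. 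The remaining task is to show that the $v$-independent integrals decay like $e^{-cM}$.

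For $|Z_M^{-1}\mathcal{J}''(v)|$, I would substitute the definition of $Z_M$ from (\ref{eqn57}) to rewrite
\[
Z_M^{-1}\mathcal{J}(v) = \frac{\nu M^{1/2}}{2\pi}\int_{\Sigma} e^{-\nu M^{1/2}v(\omega-q)} e^{-M(f(\omega)-f(\pi_N))}\frac{g(\omega)}{g(\pi_N)}\bigl[\nu M^{1/2}(\omega-\pi_N)\bigr]^k d\omega,
\]
and then invoke Lemma \ref{lemma10} on $\Sigma''$, which yields $e^{-M\Re(f(\omega)-f(\pi_N))}\leq e^{-MC_1}(t/\delta)^{-CM}$ for the relevant range $|t|\geq\delta$ (valid for $M$ large). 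The remaining factor $|g(\omega)/g(\pi_N)|\cdot|\nu M^{1/2}(\omega-\pi_N)|^k$ grows at most like $CM^{k/2}|t|^k$, since $g(\omega)\to 1$ at infinity and is bounded away from zero and infinity on $\Sigma''$. The tail integral $\int_\delta^\infty |t|^k(t/\delta)^{-CM}dt$ is finite and $O(1/M)$ once $CM>k+1$, and all polynomial-in-$M$ factors are absorbed into $e^{-MC_1/2}$, which leaves the desired bound $Ce^{-cv}e^{-cM}$.

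For $|\mathcal{J}_\infty''(v)|$, the image contour $\Sigma_\infty'' = \{2\epsilon + it:|t|\geq T_M\}$ with $T_M$ of order $\nu M^{1/2}\delta$ carries the integrand $s^k e^{s^2/2 - vs}$, whose magnitude on this vertical line is $|s|^k e^{2\epsilon^2 - t^2/2}e^{-2\epsilon v}$. Together with the prefactor $e^{\epsilon v}/(2\pi)$ from (\ref{eqn59}) this is bounded by $C|t|^k e^{-t^2/2} e^{-\epsilon v}$, and the Gaussian tail estimate $\int_{T_M}^\infty |t|^k e^{-t^2/2}dt\leq C T_M^{k-1}e^{-T_M^2/2}$ produces the required $e^{-cM}$.

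The one conceptual subtlety, and the only place the argument might look delicate, is that Lemma \ref{lemma10} gives only a logarithmic (not polynomial) lower bound on $\Re(f(\omega)-f(\pi_N))$ along $\Sigma$, unlike the cubic bound that governed the proof of Lemma \ref{lemma5} along $\Gamma$. However, the logarithm is multiplied by $M$ in the exponent, producing the super-polynomial decay $(t/\delta)^{-CM}$ which trivially dominates the polynomial growth $|t|^k$ of the remaining integrand. Hence no genuine obstacle arises, and the proof reduces to two standard tail estimates on explicit integrals.
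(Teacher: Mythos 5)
Your proposal is correct and follows essentially the same route as the paper: parametrize $\Sigma$, isolate the $v$-dependence via the constant real part $\Re(\omega-q)=\epsilon/(\nu M^{1/2})$, invoke Lemma \ref{lemma10} for the $(t/\delta)^{-cM}$ decay on $\Sigma''$, and absorb the polynomial growth of $g(\omega)|\nu M^{1/2}(\omega-\pi_N)|^k$. The only (harmless) difference is that you supply an explicit Gaussian-tail proof for $|\mathcal{J}_\infty''(v)|$, whereas the paper simply cites \cite{1} for that bound.
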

%
%
\begin{proof}
The result for $\mathcal{J}_\infty''(v)$ is proved in \cite{1} thus we just prove the result for $Z_M^{-1}\mathcal{J}''(v)$. Recall that
\[
Z_M^{-1}\mathcal{J}''(v) = \frac{\nu M^{1/2}}{\pi}\int_{\Sigma''}d\omega e^{-\nu M^{1/2}v(\omega-q)}\cdot e^{-M(f(\omega)-f(\pi_N))}\cdot\frac{g(\omega)}{g(\pi_N)}\cdot\Big[\nu M^{1/2}(\omega-\pi_N)\Big]^k.
\]
Note that by Lemma \ref{lemma10} we have 
\[
e^{-M\Re(f(\omega)-f(\pi_N))} \leq e^{-cM}\biggl|\frac{t}{\delta}\biggl|^{-cM}. 
\]
where $t$ is the paralyzation of $\omega = \pi_N+2\epsilon M^{-1/2}/\nu + it$. 
Moreover $e^{-\nu M^{1/2}v\Re(\omega-q)} = e^{-\epsilon v}$. Finally $g(\omega)(\omega-\pi_N)^k$ is just of polynomial growth of $t$. We get that 
\begin{eqnarray*}
|Z_M^{-1}\mathcal{J}''(u)| & \leq & Ce^{-cM}e^{-cv}\int_{\delta}^\infty\biggl|\frac{t}{\delta}\biggl|^{-cM}|t|^{r+1/2}dt\leq Ce^{-cM}e^{-cv}
\end{eqnarray*}
for some constants $C, c>0$ and for $M$ sufficiently large. 
\end{proof}
%
%

As expected, our next step is to prove that $Z_M^{-1}\mathcal{J}'(v)$ and $\mathcal{J}'_\infty(v)$ are sufficiently close to each other. This is proved in Lemma \ref{lemma12}.
%
%
\begin{lemma}\label{lemma12}
If $v$ is bounded below, i.e., $v>V$ for some fixed $V$, then there exists some constant $C, c>0$ and $M_0>0$ such that for $M>M_0, v>V$,
\[
|Z_M^{-1}\mathcal{J}'(v)-\mathcal{J}'_\infty(v)| \leq Ce^{-cv}M^{-1/2}.
\]
\end{lemma}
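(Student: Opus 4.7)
The plan is to reduce both $Z_M^{-1}\mathcal{J}'(v)$ and $\mathcal{J}'_\infty(v)$ to a common contour via the rescaling $s=\nu M^{1/2}(\omega-\pi_N)$, and then compare the integrands pointwise using the Taylor expansion of $f$ around the saddle $\pi_N$. Under this change of variables, the truncated contour $\Sigma'$ maps to $\Sigma'_\infty=\{s=2\epsilon+it:|s|\leq \nu M^{1/2}\delta\}$, and we have $\nu M^{1/2}(\omega-q)=s-\epsilon$, $d\omega=ds/(\nu M^{1/2})$, and $[\nu M^{1/2}(\omega-\pi_N)]^k=s^k$. Writing out both integrals gives
\begin{equation*}
Z_M^{-1}\mathcal{J}'(v)-\mathcal{J}'_\infty(v)=\frac{e^{\epsilon v}}{2\pi}\int_{\Sigma'_\infty}s^k e^{-vs}\bigl[A_M(s)-B(s)\bigr]\,ds,
\end{equation*}
where $A_M(s):=e^{-M[f(\omega)-f(\pi_N)]}\cdot g(\omega)/g(\pi_N)$ and $B(s):=e^{s^2/2}$. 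Since $\Re(s)=2\epsilon$, the factor $e^{\epsilon v}|e^{-vs}|=e^{-\epsilon v}$ extracts the required exponential decay in $v$, so everything reduces to showing that the $s$-integral of $|s|^k|A_M(s)-B(s)|$ is $O(M^{-1/2})$.

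To control the integrand difference, I would use (\ref{eqn70}) together with $f'(\pi_N)=0$ and $f''(\pi_N)=-\nu^2$ to write $M[f(\omega)-f(\pi_N)]=-s^2/2+E_M(s)$ with $|E_M(s)|\leq C|s|^3/M^{1/2}$, and use analyticity of $g$ at $\pi_N$ to write $g(\omega)/g(\pi_N)=1+O(|s|/M^{1/2})$. Combining,
\begin{equation*}
A_M(s)-B(s)=e^{s^2/2}\bigl[e^{-E_M(s)}(1+O(|s|/M^{1/2}))-1\bigr].
\end{equation*}
For $|s|\leq cM^{1/6}$ one has $|E_M(s)|\leq 1$, so the bracket is $O((|s|+|s|^3)/M^{1/2})$ and $|A_M(s)-B(s)|\leq CM^{-1/2}(1+|s|^3)e^{2\epsilon^2-t^2/2}$, which when multiplied by $|s|^k$ and integrated against the Gaussian factor yields a contribution of order $M^{-1/2}$. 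For $|s|>cM^{1/6}$, the crude lower bound $\Re(M[f(\omega)-f(\pi_N)])\geq t^2/4-3\epsilon^2$ (obtained by using (\ref{eqn70}) again with $\omega-\pi_N=s/(\nu M^{1/2})$) gives $|A_M(s)|\leq e^{3\epsilon^2-t^2/4}$, so that $|A_M-B|\leq Ce^{-t^2/4}$ there, making the tail integral $O(e^{-cM^{1/3}})$, which is much smaller than $M^{-1/2}$.

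Summing the two contour pieces and restoring the $e^{-\epsilon v}$ prefactor gives the claimed bound $|Z_M^{-1}\mathcal{J}'(v)-\mathcal{J}'_\infty(v)|\leq Ce^{-cv}M^{-1/2}$ uniformly for $v>V$. The main technical obstacle is the bookkeeping of the two regimes on $\Sigma'_\infty$, where the cubic remainder in the Taylor expansion of $f$ is only small for $|s|\ll M^{1/6}$, while the Gaussian envelope $e^{-t^2/4}$ furnished by (\ref{eqn70}) must be used to control the rest. Once this split is organized cleanly, the remaining estimates are routine. Together with Lemma \ref{lemma11}, this completes the proof of (\ref{eqn61}) in Proposition \ref{prop3}.
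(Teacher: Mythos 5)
Your proposal is correct and follows essentially the same strategy as the paper: change variables $s=\nu M^{1/2}(\omega-\pi_N)$, use the Taylor expansion of $f$ about the saddle together with analyticity of $g$, and pull the $v$-decay from $e^{\epsilon v}|e^{-vs}|=e^{-\epsilon v}$ (since $\Re(s)=2\epsilon$ is fixed on $\Sigma'_\infty$). The one structural difference is in how you control the exponential error term. You split $\Sigma'_\infty$ at $|s|\sim M^{1/6}$, using the cubic remainder bound $|E_M(s)|\leq C|s|^3/M^{1/2}\leq 1$ on the near part and the crude Gaussian envelope $|A_M(s)|\leq e^{3\epsilon^2-t^2/4}$ (from the quadratic half of (\ref{eqn70})) on the tail. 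The paper instead produces a single bound valid on all of $\Sigma'$ by using both pieces of (\ref{eqn70}) simultaneously: the cubic bound supplies the $M^{-1/2}|s|^3$ prefactor, while the quadratic bound $|E_M(s)|\leq |s|^2/4$ ensures $\Re(s^2/2-E_M(s))\leq 2\epsilon^2+\Re(s^2)/4$, so that $|e^{s^2/2-E_M}-e^{s^2/2}|\leq CM^{-1/2}|s|^3e^{c\Re(s^2)}$ with $c\in(0,1/2)$, after which a single Gaussian integral closes the argument. Both are correct; the paper's version avoids the explicit $M^{1/6}$-threshold at the cost of a slightly more delicate inequality for the difference of exponentials, while yours is more elementary but requires the two-regime bookkeeping you describe.
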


%
%
\begin{proof}
By the change of variables $s \mapsto \nu M^{1/2}(\omega - \pi_N)$ in $\mathcal{J}'_\infty$ in (\ref{eqn59}) we have
\begin{multline}
|Z_M^{-1}\mathcal{J}'(v)-\mathcal{J}_\infty'(v)| \leq \\
\frac{\nu M^{1/2}e^{-\epsilon v}}{\pi}\int_{\Sigma'}d\omega \biggl|e^{-M(f(\omega)-f(\pi_N))}\frac{g(\omega)}{g(\pi_N)}-e^{\nu^2M(\omega-\pi_N)^2/2}\biggl|\cdot\Big[\nu M^{1/2}(\omega-\pi_N)\Big]^k.\label{eqn73}
\end{multline}
Now we follow the same proof as that in Lemma \ref{lemma3} thus we just list the key steps. We can prove 
\[
|e^{-M(f(\omega)-f(\pi_N))}-e^{\nu^2M(\omega-\pi_N)^2/2}| \leq  CM^{-1/2}e^{c\Re(s^2)}s^3
\]
where $s$ is defined by $\omega =  \pi_N+(\nu M^{1/2})^{-1}s$. Further $|g(\omega)/g(\pi_N)-1|\leq CM^{-1/2}s$. Thus, by the change of variables $\omega =  \pi_N+(\nu M^{1/2})^{-1}s$ in (\ref{eqn73}) we obtain
\[
|Z_M^{-1}\mathcal{J}'(v)-\mathcal{J}_\infty'(v)|\leq \frac{e^{-\epsilon v}}{\pi}\int_{2\epsilon-\infty i}^{2\epsilon+\infty i}CM^{-1/2}e^{c\Re(s^2)}(s+s^3)\cdot s^kds = Ce^{-\epsilon v}M^{-1/2}
\]
for some constant $c, C>0$. 
\end{proof}
%
%

%
%
\subsection{Finishing the proof}\label{sec3.2.3}
Using the same steps as in \cite{1}, with minor modifications, we can prove
\begin{equation}\label{eqn74}
\mathcal{H}_\infty(u) = e^{-\epsilon u}\cdot\frac{He_{k-1}(u)}{(k-1)!}, \qquad \mathcal{J}_\infty(v) = e^{\epsilon v}e^{-v^2/2}\cdot He_k(v).
\end{equation}
where $He_k(x)$ is the $k$-th probabilists' Hermite polynomial, that is, 
\[
He_k(x) := (-1)^ke^{x^2/2}\biggl(\frac{d}{dx}\biggl)^ke^{-x^2/2}.
\]
Thus the kernel $\mathcal{K}(u, v)$ has the desired form, as is proved by \cite{1}. We omit the steps.

\section{Independence of the extreme eigenvalues}\label{sec6}
In this section we will prove that the largest eigenvalue and the smallest eigenvalue are asymptotically independent. In Section \ref{sec4} and Section \ref{sec5} we dropped the subscript one for notational convenience. In this section, however, we will consider both the maximum and the minimum eigenvalue. Hence we need these subscripts to distinguish them.

Note that we have four cases here.
\begin{itemize}
\item
Case 1. $\ell_1 = \pi_1^{-1}\leq 1+\gamma^{-1}$ and $\ell_N = \pi_N^{-1}\geq 1-\gamma^{-1}$.
\item
Case 2. $\ell_1 = \pi_1^{-1}> 1+\gamma^{-1}$ and $\ell_N = \pi_N^{-1}\geq 1-\gamma^{-1}$.
\item
Case 3. $\ell_1 = \pi_1^{-1}\leq 1+\gamma^{-1}$ and $\ell_N = \pi_N^{-1}< 1-\gamma^{-1}$.
\item
Case 4. $\ell_1 = \pi_1^{-1}> 1+\gamma^{-1}$ and $\ell_N = \pi_N^{-1}< 1-\gamma^{-1}$.
\end{itemize}
Since the proofs for all the cases are almost exactly the same, thus here we just prove the theorem for case 1. 

Recall that in Proposition \ref{prop1}, we proved that 
\begin{equation}\label{t2}
\mathbb{P}(\xi_1\leq\lambda_{\min}\leq\lambda_{\max}\leq\xi_2) = \det\biggl(I-
\left(
\begin{array}{cc}
{K}_{11} & {K}_{12} \\
{K}_{21} & {K}_{22} \\
\end{array}
\right)
\biggl). 
\end{equation}
where the operators $K_{ji}$ are defined in (\ref{eqn5}). Our first observation is that for any constant  $W_M$, we will have
\begin{eqnarray}
\mathbb{P}(\xi_1\leq\lambda_{\min}\leq\lambda_{\max}\leq\xi_2)& =& \det\biggl(I-
\left(
\begin{array}{cc}
I & 0 \\
0 & W_MI \\
\end{array}
\right)
\left(
\begin{array}{cc}
{K}_{11} & {K}_{12} \\
{K}_{21} & {K}_{22} \\
\end{array}
\right)
\left(
\begin{array}{cc}
I & 0 \\
0 & W_M^{-1}I \\
\end{array}
\right)
\biggl) \nonumber \\
& = &  
\det\biggl(I-
\left(
\begin{array}{cc}
{K}_{11} & W_M^{-1}{K}_{12} \\
W_M{K}_{21} & {K}_{22} \\
\end{array}
\right)
\biggl). \label{t1}.
\end{eqnarray}
We will leave the constant $W_M$ as an normalization constant, to be defined later. 

As the next step, to ensure the probability to be non-trivial, we need to take the proper scaling of  $\xi_1$ and $\xi_2$. 
Under the case 1, define 
\begin{equation}\label{eqn6.1}
\xi_1 = \mu_1 - \frac{\nu_1}{M^{\alpha_1}}x, \qquad\xi_2 = \mu_2 + \frac{\nu_2}{M^{\alpha_2}}y. 
\end{equation}
Here $\mu_1, \nu_1$ and $\alpha_1$ are defined in (\ref{eqn20}). The constant parameter $\mu_2, \nu_2$ and $\alpha_2$ are defined in \cite{1} by 
\begin{equation}\label{eqn6.2}
\mu_2 := \biggl(1+\frac{1}{\gamma}\biggl)^2, \qquad \nu_2 := \frac{(1+\gamma)^{4/3}}{\gamma}, \qquad \alpha_2 = \frac{2}{3}. 
\end{equation}

Our next step is analyze $K_{ji}$. Let's take $K_{11}$ as an example. As we did in (\ref{eqn11} -- \ref{eqn16}), consider the kernel $K_{11}(\eta, \zeta)$, with the change of variables
\[
\eta = \mu_1 -\frac{\nu_1}{M^{\alpha_1}}(x+u), \qquad\zeta = \mu_1 -\frac{\nu_1}{M^{\alpha_1}}(x+v)
\]
we know that $K_{11}: L^2(0, \xi_1)\to L^2(0, \xi_1)$ is equivalent to $\mathcal{K}_{11}: L^2(0, \mu_1M^{\alpha_1}/\nu_1 - x)\to L^2(0, \mu_1M^{\alpha_1}/\nu_1 - x)$ defined in (\ref{eqn14}). The analysis for the other $K_{ji}$'s are the same. Finally we can arrive at the following equation. 
\begin{equation}\label{eqn6.3}
\mathbb{P}(\widetilde{\lambda}_{\min}\leq x, \widetilde{\lambda}_{\max}\leq y) = \det\biggl(I-
\left(
\begin{array}{cc}
\mathcal{K}_{11} & W_M^{-1}\mathcal{K}_{12} \\
W_M\mathcal{K}_{21} & \mathcal{K}_{22} \\
\end{array}
\right)
\biggl)
\end{equation}
Here 
\[
\widetilde{\lambda}_{\min} := \frac{\gamma M^{2/3}}{(\gamma-1)^{4/3}}\Big[(1-\gamma^{-1})^2-\lambda_{\min}\Big], \qquad \widetilde{\lambda}_{\max} := \frac{\gamma M^{2/3}}{(\gamma+1)^{4/3}}\Big[\lambda_{\max}-(1+\gamma^{-1})^2\Big]
\]
is the proper scaling of $\lambda_{\min}, \lambda_{\max}$ and the $\mathcal{K}_{ji} (i, j\in\{1, 2\})$ is the integral operator from $\widetilde{J}_i$ to $\widetilde{J}_j$ where $\widetilde{J}_1 := (0, \mu_1M^{\alpha_1}/\nu_1 - x)$ and $\widetilde{J}_2 = (0, \infty)$. The kernels on the diagonal are 
\begin{eqnarray}
\mathcal{K}_{11}(u, v) & = & -\int_0^\infty \mathcal{H}_1(y+x+u)\mathcal{J}_1(y+x+v)dy \label{eqn6.4}\\ 
\mathcal{K}_{22}(u, v) & = & \int_0^\infty \mathcal{H}_2(y+x+u)\mathcal{J}_2(y+x+v)dy \label{eqn6.4.1}
\end{eqnarray}
with $\mathcal{H}_1(u), \mathcal{J}_1(v)$ defined by (\ref{eqn12}) and (\ref{eqn13}). The $\mathcal{H}_2(u), \mathcal{J}_2(v)$ appeared in (\ref{eqn6.4}) are defined in \cite{1} by 
\begin{eqnarray}
\mathcal{H}_2(u) := \frac{\nu_2M^{1/3}}{2\pi}\int_{\Gamma_2}e^{-\nu_2M^{1/3}u(z-q_2)}e^{-M\mu_2(z-q_2)}z^M\prod_{\ell=1}^N\frac{1}{z-\pi_\ell}dz \label{eqn6.5} \\
\mathcal{J}_2(u) := \frac{\nu_2M^{1/3}}{2\pi}\int_{\Sigma_2}e^{\nu_2M^{1/3}v(\omega-q_2)}e^{M\mu_2(\omega-q_2)}\omega^{-M}\prod_{\ell=1}^N({\omega-\pi_\ell})d\omega.\label{eqn6.6}
\end{eqnarray}
In Section \ref{sec4} we have already shown that $\mathcal{H}_1(u), \mathcal{J}_1(v)$ will tend to the desired limit at the speed of $M^{-1/3}$ as $M$ tends to infinity. In \cite{1}, Baik, Ben Arous and P\'{e}ch\'{e}  showed that $\mathcal{H}_2(u), \mathcal{J}_2(v)$ will also tend to the desired form at the speed of $M^{-1/3}$. That is to say, the diagonal part in (\ref{eqn6.3}) has the non-trivial limit because of the correct scaling in (\ref{eqn6.1}). In order to prove that $\lambda_{\min}$ and $\lambda_{\max}$ are asymptotically independent, our strategy is to show that the off-diagonal terms $W_M\mathcal{K}_{21}$ and $W_M^{-1}\mathcal{K}_{12}$ in (\ref{eqn6.3}), properly scaled, will tend to zero. Hence the matrix in (\ref{eqn6.3}) will tend to a diagonal matrix. Since the determinant of a diagonal matrix equals to the product of the determinant of its diagonal parts, the joint probability $\mathbb{P}(\widetilde{\lambda}_{\min}\leq x, \widetilde{\lambda}_{\max}\leq y)$ is approximately $\mathbb{P}(\widetilde{\lambda}_{\min}\leq x)\mathbb{P}(\widetilde{\lambda}_{\max}\leq y)$. Now we carry on our strategy in details. 

First consider the off-diagonal terms in (\ref{eqn6.3}). First define the constant 
\begin{equation}\label{const}
W_M = \frac{e^{Mf_1(p_1)}}{e^{Mf_2(p_2)}}\cdot \frac{\widehat{g}_1(p_1)}{\widehat{g}_2(p_2)}\cdot\frac{(\nu_1 M^{1/3})^{k_1}}{(\nu_2 M^{1/3})^{k_2}}.
\end{equation}
($p_1, p_2$ and the functions $f_1(\cdot), f_w(\cdot), \widehat{g}_1(\cdot), \widehat{g}_2(\cdot)$ will be defined below).

Using the change of variables as we did in (\ref{eqn11} -- \ref{eqn16}), we get  
\begin{eqnarray}
W_M^{-1}\mathcal{K}_{12}(u, v)  & = & -\frac{1}{\nu_1 M^{1/3}}\int_{\Gamma_1}dz\int_{\Sigma_2}d\omega \widehat{H}_1(x+u; z)\widehat{J}_2(y+v; \omega)\cdot\frac{1}{\omega-z}, \label{eqn6.10}\\
W_M\mathcal{K}_{21}(u, v)  & = & -\frac{1}{\nu_2 M^{1/3}}\int_{\Gamma_2}dz\int_{\Sigma_1}d\omega \widehat{H}_2(y+u; z)\widehat{J}_1(x+v; \omega)\cdot\frac{1}{\omega-z}. \label{eqn6.9}
\end{eqnarray}
Here 
\begin{eqnarray}
\widehat{H}_1(u; z) & = & e^{M(f_1(z)-f_1(p_1))}e^{\nu_1M^{1/3}u(z-q_1)}\cdot \frac{\widehat{g}_1(z)}{\widehat{g}_1(p_1)}\cdot\frac{1}{[\nu_1M^{1/3}(z-p_1)]^{k_1}}, \label{eqn6.11}\\
\widehat{H}_2(u; z) & = & e^{M(f_2(z)-f_2(p_2))}e^{-\nu_2M^{1/3}u(z-q_2)}\cdot \frac{\widehat{g}_2(z)}{\widehat{g}_2(p_2)}\cdot\frac{1}{[\nu_2M^{1/3}(z-p_2)]^{k_2}}, \label{eqn6.12} \\
\widehat{J}_1(v; \omega) & = & e^{-M(f_1(\omega)-f_1(p_1))}e^{-\nu_1M^{1/3}u(\omega-q_1)}\cdot \frac{\widehat{g}_1(p_1)}{\widehat{g}_1(\omega)}\cdot{[\nu_1M^{1/3}(\omega-p_1)]^{k_1}}, \label{eqn6.13}\\
\widehat{J}_2(v; \omega) & = & e^{-M(f_2(\omega)-f_2(p_2))}e^{\nu_2M^{1/3}u(\omega-q_2)}\cdot \frac{\widehat{g}_2(p_2)}{\widehat{g}_2(\omega)}\cdot{[\nu_2M^{1/3}(\omega-p_2)]^{k_2}}. \label{eqn6.14} 
\end{eqnarray}
The function $f_1(z), f_2(z), \widehat{g}_1(z), \widehat{g}_2(z)$ are defined by 
\begin{eqnarray}
f_i(z) & = & -\mu_i(z-q_i)+\log z -\frac{1}{\gamma^2}\log(z-1), \qquad i\in\{1, 2\}.\label{eqn6.15} \\
g_1(z) &:=& (z-1)^{r_1}\prod_{\ell=N-r_1+1}^{N-k_1}\frac{1}{z-\pi_\ell}, \quad  g_2(z) := (z-1)^{r_2}\prod_{\ell=k_2+1}^{r_2}\frac{1}{z-\pi_\ell}
\end{eqnarray}
and the parameter $p_1, p_2, q_1, q_2$ are defined by 
\begin{equation}\label{eqn6.17}
p_1 = \frac{\gamma}{\gamma-1}, \quad p_2 = \frac{\gamma}{\gamma+1}, \qquad q_1 =p_1+\frac{\epsilon}{\nu_1M^{1/3}}, \quad q_2 =p_2-\frac{\epsilon}{\nu_2M^{1/3}}.
\end{equation}

From some simple calculation, we know that $p_1, p_2$ are the saddle point of $f_1(z)$ and $f_2(z)$, respectively. Below we will perform our saddle point analysis on $W_M^{-1}\mathcal{K}_{12}$ and $W_M\mathcal{K}_{21}$. Let's start from $W_M^{-1}\mathcal{K}_{12}(u, v)$ first. In Figure \ref{fig5} we plot the contour $\Gamma_1$ and $\Sigma_2$ in (\ref{eqn6.10}). The contour $\Gamma_1$ is defined in Figure \ref{fig2}, Section \ref{sec4}. Moreover the contour $\Sigma_2$ is defined in \cite{1}. We quote their definition below. 
\begin{figure}[!h]
    \centering
    \includegraphics[scale=.7]{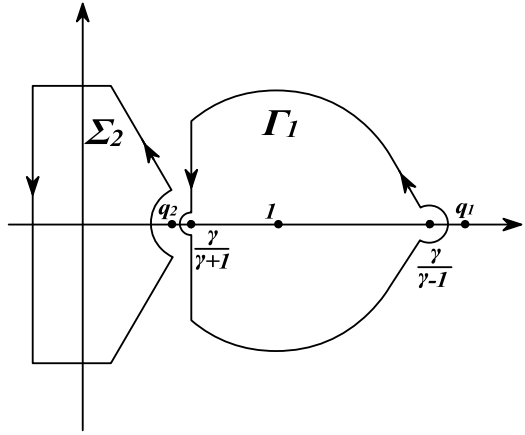}
    \caption{The Contour $\Gamma_1$ and $\Sigma_2$.}
    \label{fig5}
\end{figure}

Define $\Sigma_2 := \bigcup_{k=0}^3\Sigma_{2, k}\cup\overline{\bigcup_{k=0}^3\Sigma_{2, k}}$ where 
\begin{eqnarray*}
\Sigma_{2, 0} & := & \{p_2 + 3\epsilon(\nu_2 M^{1/3})e^{i(\pi-\theta)}: 0\leq\theta\leq \pi/3\}, \\
\Sigma_{2, 1} & := & \{p_2+te^{2i\pi/3}: 3\epsilon(\nu_2 M^{1/3})\leq t\leq 2p_2\}, \\
\Sigma_{2, 2} & := & \{p_2 + 2p_2e^{2i\pi/3}-x: 0\leq x\leq R\}, \\
\Sigma_{2, 3} & := & \{-R+i(\sqrt{3}p_2-y): 0\leq y\leq \sqrt{3}p_2\}
\end{eqnarray*}
where $R$ is a pre-fixed constant which is sufficiently large and independent of $M$. 

The intuition of the proof is as follows. The saddle point of $f_1(z), f_2(z)$ is $p_1, p_2$ respectively. Moreover in our contour we leave the saddle points along the steepest decent direction, in (\ref{eqn6.10}) the integral on $z$ will be mostly concentrated on the point $p_2$ and the integral on $\omega$ will be mostly concentrated on the point $p_1$. But by our choice of the constant $W_{M}$, $\widehat{H}_1(x+u; z)$ and $\widehat{J}_2(y+v; \omega)$ will be bounded at these two saddle points. Moreover around these two points $(\omega-z)^{-1}\approx(p_2-p_1)^{-1}$ will also be bounded. Hence due to the factor $M^{-1/3}$ in front of the integral of (\ref{eqn6.10}), $W_M^{-1}\mathcal{K}_{12}(u, v)$ will tend to zero, at the speed of $M^{-1/3}$.  To paraphrase our intuition into a rigorous proof, we will arrive at the following lemma. 

%
%
\begin{lemma}\label{lemma6.1}
These exist some constant $C>0, c>0, M_0>0$ such that uniformly for $u>U, v>V$, we have
\begin{eqnarray}\label{eqn6.20}
|W_M^{-1}\mathcal{K}_{12}(u, v)|& \leq& C(e^{-cu-cv})M^{-1/3}, \label{eqn6.20}\\
|W_M\mathcal{K}_{21}(u, v)| &\leq& C(e^{-cu-cv})M^{-1/3}.  \label{eqn6.20.1}
\end{eqnarray}
\end{lemma}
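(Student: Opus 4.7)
The plan is to perform saddle point analysis on both integrals in (\ref{eqn6.10}) simultaneously, exploiting the crucial fact that the two saddle points $p_1=\gamma/(\gamma-1)$ and $p_2=\gamma/(\gamma+1)$ are bounded away from each other, so the factor $1/(\omega-z)$ stays bounded on the contours $\Gamma_1$ and $\Sigma_2$ used in the preceding sections. I will prove (\ref{eqn6.20}) in detail; (\ref{eqn6.20.1}) follows by the symmetric argument with the roles of the two kernels swapped.

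First I would verify that the normalization $W_M$ defined in (\ref{const}) is exactly the one that keeps $\widehat{H}_1$ and $\widehat{J}_2$ pointwise-bounded near their respective saddle points, so that the factors $g_i(p_i)/g_i(\cdot)$, the exponentials $e^{\pm M(f_i(\cdot)-f_i(p_i))}$ and the powers of $\nu_i M^{1/3}(\cdot-p_i)$ all balance. Concretely, on $\Gamma_1$ the analysis of Section \ref{sec4} (Lemmas \ref{lemma1}, \ref{lemma2}, \ref{lemma3}, applied directly to $\widehat{H}_1$ in place of $\mathcal{H}_1$) yields $|\widehat{H}_1(x+u;z)|\le Ce^{-cu}$ on the near-saddle arc $\Gamma_{1,0}$ and $|\widehat{H}_1(x+u;z)|\le Ce^{-cu}e^{-cM}$ on the far pieces $\Gamma_1\setminus\Gamma_{1,0}$. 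Symmetrically, from the saddle analysis of $\mathcal{J}_2$ in \cite{1} one obtains $|\widehat{J}_2(y+v;\omega)|\le Ce^{-cv}$ on the near-saddle arc $\Sigma_{2,0}$ and $|\widehat{J}_2(y+v;\omega)|\le Ce^{-cv}e^{-cM}$ elsewhere on $\Sigma_2$, the exponential in $v$ coming from $\Re(\omega-q_2)\le-\epsilon/(2\nu_2M^{1/3})$, exactly as in (\ref{eqn42}).

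Next I would split the double integral in (\ref{eqn6.10}) according to the decomposition
\[
\Gamma_1\times\Sigma_2=(\Gamma_{1,0}\times\Sigma_{2,0})\ \sqcup\ (\Gamma_{1,0}\times(\Sigma_2\setminus\Sigma_{2,0}))\ \sqcup\ ((\Gamma_1\setminus\Gamma_{1,0})\times\Sigma_2),
\]
and bound each piece separately. The factor $1/|\omega-z|$ is uniformly bounded on all three pieces: on the main piece because $|p_2-p_1|=2\gamma/(\gamma^2-1)$ is a fixed positive distance and both arcs shrink at rate $M^{-1/3}$ around the saddle points, and on the other pieces because the contours $\Gamma_1$ and $\Sigma_2$ do not intersect (their definitions in Figures \ref{fig2} and \ref{fig5} keep them in disjoint compact regions of $\mathbb{C}$). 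On the main piece, the contour length is $O(M^{-1/3})\cdot O(M^{-1/3})=O(M^{-2/3})$, and multiplied by the prefactor $1/(\nu_1M^{1/3})$ gives a contribution of order $e^{-cu-cv}M^{-1}$, well within the stated bound. On the remaining two pieces, one of the factors $\widehat{H}_1$ or $\widehat{J}_2$ carries the extra factor $e^{-cM}$, and the other is integrable against $1/|\omega-z|$ times the polynomial growth of $g_i$ on bounded sets (for $\Sigma_{2,3}$, which is unbounded, the logarithmic growth estimate in the analog of Lemma \ref{lemma5} controls the tail); multiplied by the prefactor this yields $e^{-cu-cv}e^{-cM}$, again within the required bound.

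The main obstacle I anticipate is bookkeeping rather than conceptual: I must check that the various $\epsilon$'s, radii and $\kappa$'s chosen independently in Sections \ref{sec4} and in \cite{1} are compatible when the two contour systems coexist in the same $z$-plane, and in particular that the contour $\Sigma_{2,3}$ (which extends to the far left, to $\Re(z)=-R$) does not come close to the contour $\Gamma_1$ (which lies in a neighborhood of $[1,\pi_1]$). Since $R$ is a fixed positive constant, a small additional translation if necessary suffices to keep $|\omega-z|$ bounded below uniformly. Combining the three estimates with the explicit $1/(\nu_1M^{1/3})$ prefactor then gives (\ref{eqn6.20}). The bound (\ref{eqn6.20.1}) follows by the identical argument with $(\Gamma_1,\Sigma_2,f_1,f_2,p_1,p_2)$ replaced by $(\Gamma_2,\Sigma_1,f_2,f_1,p_2,p_1)$ and with the roles of $u$ and $v$ interchanged.
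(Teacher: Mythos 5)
Your overall strategy matches the paper's: split both contours near and far from the respective saddle points $p_1, p_2$, exploit the boundedness of $1/|\omega-z|$ on the near-near piece, and kill the remaining pieces by the saddle-point decay. But there is a genuine gap in your estimate for the pieces away from the saddle. You take $\Gamma_{1,0}$ to be the tiny arc of radius $O(M^{-1/3})$ around $p_1$ (this is forced by your claim that the main piece has length $O(M^{-1/3})\cdot O(M^{-1/3})$), and then assert that $|\widehat{H}_1(x+u;z)|\le Ce^{-cu}e^{-cM}$ on $\Gamma_1\setminus\Gamma_{1,0}$. This is false on the intermediate range: for $z=p_1+te^{2\pi i/3}$ with $\epsilon/(2\nu_1M^{1/3})< t<\delta$ the dominant factor is only $e^{-cMt^3}$, which near the lower endpoint $t\sim M^{-1/3}$ is $O(1)$, not $O(e^{-cM})$. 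The $e^{-cM}$ estimate you cite from Lemma \ref{lemma2} holds only on $\Gamma''=\{|z-p_1|\ge\delta\}$ for a fixed constant $\delta$, so your decomposition leaves the range $\epsilon/(2\nu_1M^{1/3})\le|z-p_1|\le\delta$ entirely unaccounted for, and the pointwise bound you assert there is wrong.

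The paper sidesteps this by using the fixed-$\delta$ split $\Gamma_1=\Gamma_1'\sqcup\Gamma_1''$ with $\Gamma_1'=\{|z-p_1|<\delta\}$ of constant length, proving only the crude bound $\int_{\Gamma_1'}|\widehat{H}_1|\,dz\le Ce^{-cu}$ (bounded, not decaying), and similarly for $\Sigma_2'$; the factor $M^{-1/3}$ in (\ref{eqn6.20}) then comes solely from the explicit prefactor $1/(\nu_1M^{1/3})$ in (\ref{eqn6.10}), not from any shrinkage of contour length. Your argument can be repaired --- with the correct estimate, the integral of $|\widehat{H}_1|$ over the missing middle range is of order $M^{-1/3}e^{-cu}$, which together with the prefactor in fact yields the stronger bound $O(M^{-1})$ --- but as written the $e^{-cM}$ claim off the tiny arc is incorrect and load-bearing. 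A secondary caution: $1/|\omega-z|$ is not uniformly bounded over all of $\Gamma_1\times\Sigma_2$, since $\Gamma_1$ terminates near $p^*=\gamma/(\gamma+1)=p_2$ where $\Sigma_2$ is anchored; the paper therefore uses the crude bound $|\omega-z|^{-1}\le CM^{1/3}$ on the far pieces (absorbed by $e^{-cM}$), reserving the lower bound $|\omega-z|\ge(p_1-p_2)/2$ only for $\Gamma_1'\times\Sigma_2'$.
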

%
%
\begin{proof}
Here we just prove (\ref{eqn6.20}) as the proof for (\ref{eqn6.20.1}) will be similar. 
Let's define $\delta < \min\{p_1/10, p_2/10, (p_1-p_2)/10\}$ to a sufficiently small constant independent of $M$. Just like what we did in Section \ref{sec4} and \ref{sec5}, we decompose the contour by $\Gamma_1 := \Gamma_1' + \Gamma_1''$, $\Sigma_2 := \Sigma_2' + \Sigma_2''$ by 
\begin{eqnarray*}
\Gamma_1' = \{z\in\Gamma_1 : |z-p_1|\leq\delta\}, \qquad \Gamma_1'' = \Gamma_1\backslash\Gamma_1', \\
\Sigma_2' = \{z\in\Sigma_2 : |z-p_2|\leq\delta\}, \qquad \Sigma_2'' = \Sigma_2\backslash\Sigma_2'. 
\end{eqnarray*}
We first analyze $\Gamma_1$. By Lemma \ref{lemma1} we know that $\Re(f(z))$ is decreasing on $\Gamma_1'$. Hence there exist some constant $c$ such that 
\begin{equation}\label{eqn6.21}
\forall z\in\Gamma_1', \Re\Big[f(z)-f(p_1)\Big]\leq 0, \qquad \forall z\in\Gamma_1'', \Re\Big[f(z)-f(p_1)\Big]\leq -c.
\end{equation}
For $\Sigma_2$ we can also deduce the same result. From Lemma 3.2 in \cite{1}, we have
\begin{equation}\label{eqn6.22}
\forall \omega\in\Sigma_2', \Re\Big[f(z)-f(p_1)\Big]\geq 0, \qquad \forall \omega\in\Sigma_2'', \Re\Big[f(\omega)-f(p_2)\Big]\geq c.
\end{equation}
For $z\in \Gamma_1'$, we note three results proved before. (1) $\exp[\nu_1M^{1/3}u\Re(z-q_1)]\leq \exp(-\epsilon u/2)$ by the definition of the contour; (2) $|\widehat{g}_1(z)/\widehat{g}_1(p_1)|$ is bounded; and (3) $|\nu_1M^{1/3}(z-p_1)|^{-k_1}\leq \epsilon^{-k_1}$ is bounded.  These three results, together with (\ref{eqn6.21}) give that 
\begin{equation}\label{eqn6.23}
\int_{\Gamma_1'}|\widehat{H}_1(u; z)|dz \leq Ce^{-cu}
\end{equation}
for some constant $C, c>0$. 
Now we proceed to prove
\begin{equation}\label{eqn6.24}
\int_{\Sigma_2'}|\widehat{J}_2(v; \omega)|d\omega \leq Ce^{-cv}
\end{equation}
Similar to (\ref{eqn37} -- \ref{eqn41}), we can choose $\delta$ small enough such that for $|\omega-p_2|<\delta$,
\begin{equation}\label{eqn6.25}
\Re\Big[f(\omega)-f(p)\Big]\geq \frac{\nu_2^3}{6}|\omega-p|^3. 
\end{equation}
Actually in \cite{1} the authors proved that (\ref{eqn6.25}) holds as long as 
\[
\delta < \min\biggl\{\frac{1}{2(1+\gamma)}, \frac{\nu_2^3}{6(4^3+4(1+\gamma)^4)}\biggl\}.
\]
Now for $\omega \in\Sigma_2'$, 
\begin{eqnarray}
|\widehat{J}_2(v, \omega)| &\leq & Ce^{-cv}e^{-(\nu_2M^{1/3}|\omega-p_2|)^3/6}\biggl|\frac{\widehat{g}_2(p_2)}{\widehat{g}_2(\omega)}\biggl|\Big[\nu_2M^{1/3}|\omega-p_2|\Big]^{k_2} \nonumber\\
& \leq & Ce^{-cv}
\label{eqn6.26}
\end{eqnarray}
since that $|{\widehat{g}_2(p_2)}/{\widehat{g}_2(\omega)}|$ is bounded and that $e^{-cx}x^{k_2}$ is bounded for $x\geq0$. From (\ref{eqn6.26}) we obtain (\ref{eqn6.24}) directly. Having analyzed the behavior of $\widehat{H}_1, \widehat{J}_2$ on $\Gamma_1'$ and $\Sigma_2'$, our next step is to prove that they decay exponentially fast on $\Gamma_1''$ and $\Sigma_2''$. Let's take $\widehat{H}_1(u; z)$ as an example. The proof of $\widehat{J}_2(v; \omega)$ will be similar. Indeed, by the second part of (\ref{eqn6.21}) and by the fact that $\widehat{g}_1(z)/\widehat{g}_1(p_1)$ is bounded, we obtain 
\begin{equation}\label{eqn6.27}
\int_{\Gamma_1''}|\widehat{H}_1(u; z)|dz \leq Ce^{-cu}e^{-cM}. 
\end{equation}
Similar results for $\widehat{J}_2(v; \omega)$ can also be obtain. 

As the last step in proving our lemma, we have 
\begin{equation}\label{eqn6.28}
W_M^{-1}\mathcal{K}_{12}(u, v) = -\frac{1}{\nu_1M^{1/3}}\Big[I_1+I_2+I_3+I_4\Big].
\end{equation}
Here 
\begin{equation}\label{eqn6.29}
I_1 := \int_{\Gamma_1'}dz\int_{\Sigma_2'}d\omega \widehat{H}_1(x+u; z)\widehat{J}_2(y+v; \omega)\cdot\frac{1}{\omega-z},
\end{equation}
and $I_2, I_3, I_4$ are just the integral of (\ref{eqn6.29}) on $(\Gamma_1'', \Sigma_2'), (\Gamma_1', \Sigma_2'')$ and $(\Gamma_1'', \Sigma_2'')$, respectively.  

In $I_1$, $z\in\Gamma_1'$ and $\omega\in\Sigma_2''$, we have $|\omega-z|^{-1}\leq((p_1-p_2)/2)^{-1}$ is bounded above. Hence by (\ref{eqn6.23}) and (\ref{eqn6.24}) we have 
\begin{equation}\label{6.30}
|I_1| \leq Ce^{-cu-cv}. 
\end{equation}
For $I_2$, $z\in\Gamma_1'', \omega\in\Sigma_2'$. We just use the trivial bound $|\omega-z|^{-1}\leq CM^{1/3}$ for some constant $C$, by the definition of our contour. Hence by (\ref{eqn6.27}) and (\ref{eqn6.26}) we get
\begin{equation}\label{eqn6.31}
|I_2|\leq Ce^{-cu-cv}e^{-cM}\cdot M^{1/3} \leq Ce^{-cu-cv}e^{-cM}. 
\end{equation}
With similar analysis, we can prove that (\ref{eqn6.31}) also holds for $I_3$ and $I_4$. Note that in (\ref{eqn6.28}) we have an extra $M^{-1/3}$ in front of the integral. This finishes the proof of (\ref{eqn6.20}) in our lemma. 
\end{proof}
%
%
As a simple consequence, Theorem \ref{thm2} holds. Here's the rest of the proof for the theorem. 
%
%
\begin{proof}[Proof of Theorem \ref{thm2}]
By Lemma (\ref{lemma6.1}) we have 
\begin{equation}\label{eqn6.33}
\|W_M^{-1}\mathcal{K}_{21}\|_1 \leq CM^{-1/3}, \qquad \|W_M\mathcal{K}_{12}\|_1\leq CM^{-1/3}
\end{equation}
for some constant $C$. Here $\|\cdot\|_1$ is the trace norm of the operator. Since the Fredholm determinant is a locally Lipchitz continuous function with respect to the trace norm, from (\ref{eqn6.3}) we obtain
\begin{eqnarray*}
\mathbb{P}(\widetilde{\lambda}_{\min}\leq x, \widetilde{\lambda}_{\max}\leq y) & = & \det\biggl(I-
\left(
\begin{array}{cc}
\mathcal{K}_{11} & W_M^{-1}\mathcal{K}_{12} \\
W_M\mathcal{K}_{21} & \mathcal{K}_{22} \\
\end{array}
\right)
\biggl) \\
& = & \det\biggl(I-
\left(
\begin{array}{cc}
\mathcal{K}_{11} & 0 \\
0 & \mathcal{K}_{22} \\
\end{array}
\right)
\biggl) + \mathcal{O}(M^{-1/3}) \\
& = & \det(I-\mathcal{K}_{11})\det(I-\mathcal{K}_{22}) + \mathcal{O}(M^{-1/3}) \\
&\to& F_{k_1}(x)F_{k_2}(y). 
\end{eqnarray*}
\end{proof}
%
%

\section{Conclusion}\label{sec7}
In this paper, we studied the spiked population model to establish two results: (1) the asymptotic distribution of the smallest eigenvalue of the sample covariance matrix, and (2) that the largest and the smallest eigenvalues are independent. Our approach is based on the convergence of operators under the trace norm. 

For the smallest eigenvalue, when the spike is weak (all above the threshold $1-\gamma^{-1}$), the local fluctuation is of order $\mathcal{O}(M^{-2/3})$, and will converge to the generalized Tracy-Widom law under the correct scaling. When the spike is strong enough to pull the smallest eigenvalue out of the Mar\v{c}enko-Pastur sea, it wil fluctuate at the order of $\mathcal{O}(M^{-1/2})$, following the generalized Gaussian distribution. We also proved that the largest and smallest eigenvalues are independent. Combined with the asymptotic behavior of $\lambda_{\max}$ proposed in \cite{1}, we can establish the joint distribution of $(\lambda_{\min}, \lambda_{\max})$.

%
%

\end{document}